\documentclass{article}
\usepackage{graphicx} % Allows including images  
\usepackage{amssymb,latexsym,amsfonts,amsthm,a4wide}              %%%%%%%%%%%%%%%%%            amsthm
\usepackage{mathrsfs}%%%% 
\usepackage{mathtools}
\usepackage{bm}% 
\usepackage{subfigure}
\usepackage{float}
\usepackage{cases}
\usepackage{textcomp} % 
\usepackage{hyperref}
\usepackage{booktabs}% 
\usepackage{color}
\usepackage{algorithm, algpseudocode} 
%---------------------------------------------------------------------------------
%\hypersetup{hypertex=true,
%            colorlinks=true,
%            linkcolor=blue,
%            anchorcolor=blue,
%            citecolor=blue}
\newtheorem{theorem}{Theorem}[section]
\newtheorem{lemma}{Lemma}[section]
\newtheorem{remark}{Remark}

\numberwithin{equation}{section}

\newcommand{\normmm}[1]{{\left\vert\kern-0.25ex\left\vert\kern-0.25ex\left\vert #1   % definit of norm with
   \right\vert\kern-0.25ex\right\vert\kern-0.25ex\right\vert}}                       % 3 vertical line
                                                    %  jump
%---------------------------------------------------------------------------------

%---------------------------------------------------------------------------------

%%%
\title{Some semi-decoupled algorithms with optimal convergence for a four-field linear thermo-poroelastic model}

 \author{Li Ziliang \thanks {Harbin Engineering University}
       \and Cai Mingchao \thanks{Morgan State University}  
       \and Li Jingzhi \thanks {Southern University of Science and Technology }
        \and
       Liu Qiang\thanks{Shenzhen University}\footnotemark[3]
        }
\date{}
\begin{document}

\maketitle

\begin{abstract}
We propose three semi-decoupled algorithms for efficiently solving a four-field thermo-poroelastic model. The first two algorithms adopt a sequential strategy: at the initial time step, all variables are computed simultaneously using a monolithic solver; thereafter, the system is split into a mixed linear elasticity subproblem and a coupled pressure–temperature reaction-diffusion subproblem. The two variants differ in the order in which these subproblems are solved. To further improve computational efficiency, we introduce a parallel semi-decoupled algorithm. In this approach, the four-field system is solved monolithically only at the first time step, and the two subproblems are then solved in parallel at subsequent time levels.
None of the three algorithms requires iterative procedures at each time step, and are free from stabilization. Rigorous analysis confirms their unconditional stability, optimal convergence rates, and robustness under a wide range of physical parameter settings. These theoretical results are further validated by numerical experiments.

%This parallel splitting technique contrasts with sequential or iterative decoupling approaches, substantially reducing computational time.

\noindent\textbf{Keywords} thermo-poroelasticity, decoupled algorithms, parallel, finite element methods.

%\noindent \textbf{Mathematics Subject Classification(2000)} 65N30.
\end{abstract}
%%% 

\maketitle

\section{Introduction}
The theoretical framework of poroelasticity was first established by Terzaghi in his seminal work on one-dimensional consolidation theory~\cite{terzaghi1943theoretical}. Building on this foundation, Biot developed a rigorous mathematical formulation that describes the dynamic interaction between a deformable porous solid matrix and the interstitial fluid under isothermal conditions, leading to what is now widely known as Biot’s theory of poroelasticity~\cite{biot1941general, biot1955theory}. These classical models have since been extended to incorporate thermal effects, giving rise to the thermo-poroelastic model, which integrates temperature-driven processes into the coupled system of equations~\cite{florez2018linear, selvadurai2016thermo}. Due to its generality and strong physical grounding, the thermo-poroelastic framework finds widespread applications in a range of disciplines, including geomechanics (e.g., soil consolidation, geothermal reservoirs), biomedical engineering (e.g., thermal ablation of tumors, drug transport in tissues), and environmental sciences (e.g., carbon dioxide sequestration and thermal remediation).

The thermo-poroelastic model governs the interplay among three fundamental physical phenomena: fluid flow transport, mechanical deformation, and heat conduction. Mathematically, this results in a coupled system of three interdependent partial differential equations: the mass conservation equation for fluid flow, the momentum conservation equation for mechanical response, and the energy conservation equation accounting for thermal diffusion. Depending on the constitutive laws and nonlinearities considered, the system can be either linear or nonlinear. In this work, we restrict our attention to the linear model.  Our focus in this paper is to develop fast, efficient, and robust numerical algorithms for the linear thermo-poroelastic system that can support large-scale simulations in these complex multiphysics settings.

Recent significant progress in the mathematical analysis and numerical approximation of thermo-poroelasticity has enriched both theoretical and computational frameworks. The well-posedness of nonlinear thermo-poroelastic models, including existence, uniqueness, energy estimates, and regularity results, has been established in studies such as~\cite{brun2019well}.
On the computational front, significant progress has been made through both fully coupled and decoupled numerical strategies. Among fully coupled approaches, Galerkin and mixed finite element methods for nonlinear formulations have been explored in~\cite{zhang2024coupling, zhang2022galerkin}, while discontinuous Galerkin techniques for linear and nonlinear problems have been developed in~\cite{antonietti2023discontinuous, bonetti2024robust, bonetti2023numerical}. Efficient solution strategies, including preconditioning techniques for linear thermo-poroelastic systems, are presented in~\cite{cai2025parameter}. A hybrid finite element framework combining mixed, characteristic, and Galerkin elements for different fields has also been proposed~\cite{Zhang2023mfe}.
In the decoupled category, methods are typically divided into sequential and iterative types. Sequential decoupling schemes have been enhanced through four-field formulations that introduce displacement divergence as an auxiliary variable~\cite{chen2022multiphysics}, and further extended to nonlinear models with convective effects~\cite{Ge2023analysis}. Iterative decoupling algorithms, including partially and fully decoupled five-field formulations involving heat and Darcy fluxes, are investigated in~\cite{brun2020monolithic}. More recent developments integrate reduced-order modeling to enhance efficiency~\cite{Ballarin2024projection}, while parameter-robust and stability-insensitive iterative solvers are proposed in~\cite{cai2025efficient}.

Although a variety of numerical methods have been developed for thermo-poroelastic models, the construction of efficient and stable split-parallel algorithms—capable of enabling concurrent computation of subproblems—remains underexplored. The present study advances existing methodologies by developing a class of semi-decoupled algorithms for the thermo-poroelastic system that eliminate the need for iterative coupling between subproblems, generalizing the techniques first proposed for Biot’s model in~\cite{cai2023some, zhao2025optimally}. Specifically, we present two sequentially split semi-decoupled algorithms and one parallel semi-decoupled algorithm.
All three algorithms begin with a fully coupled solve at the initial time step and subsequently decouple the system into two subsystems: a mixed linear elasticity problem and a coupled reaction-diffusion system for fluid pressure and temperature. The proposed parallel algorithm (Algorithm 3 in Section~3) achieves true concurrency by using a two-step time discretization to break the temporal interdependence between subsystems. This is in contrast to the approach in~\cite{cai2023some}, where subproblems are solved sequentially at each time level due to the dependence of temporal derivatives on both current and previous solutions.
In our parallel algorithm, the elasticity subproblem is solved by approximating the pressure and temperature using values from the previous time step, enabling independent computation of displacement and total pressure. Concurrently, the reaction-diffusion subproblem estimates the total pressure derivative using information from the two most recent time levels, thereby allowing the pressure and temperature fields to be updated without requiring the current displacement. A key innovation of our method lies in its ability to avoid the use of stabilization terms, which are typically required in existing split-parallel algorithms to ensure stability~\cite{zhao2025optimally}. By design, the proposed algorithm achieves stability and accuracy without additional stabilization, resulting in reduced computational overhead and enhanced efficiency for large-scale simulations.

%A split-parallel algorithm for multiple-network poroelasticity models was recently proposed in \cite{zhao2025optimally}.

The structure of this paper is as follows. Section \ref{sec: PDE model} introduces the thermo-poroelasticity model together with its mathematical reformulation. In Section \ref{sec: algorithms}, we present two sequentially split semi-decoupled algorithms and one parallel semi-decoupled algorithm. Section \ref{sec: convergent} is devoted to establishing the optimal convergence of the parallel algorithm and demonstrating its robustness under a broad range of parameter settings. Section \ref{sec: experiments} reports numerical experiments that validate the theoretical results and highlight the efficiency of the proposed methods. Finally, conclusions are drawn in Section \ref{sec: conclusions}

\section{Mathematical model and its reformulation}\label{sec: PDE model}

We introduce some notations that will be utilized throughout this paper.
Let $\Omega\in\mathbb R^d$, $d=2$ or $3$ be a bounded domain and $J=(0,\tau)$ with $\tau$ being the final time. 
For $1\le s < \infty$, let $L^s(\Omega)=\{v:\int_\Omega|v|^s<\infty \}$ represent the standard Banach space, with the associated norm $\Vert v\Vert_{L^s(\Omega)}=(\int_\Omega |v|^s)^{\tfrac1s}$. We use $(\cdot,\cdot)$ to denote the $L^2$ inner product. 
%For $s=\infty$, let $L^\infty(\Omega)=\{ v : \Omega \to \mathbb{R} : \text{ess sup}_{x \in \Omega} |v(x)| < \infty \} $ be the space of uniformly bounded measurable functions on $\Omega$, with the norm $\Vert v\Vert_{L^\infty(\Omega)}= \text{ess sup}_{x \in \Omega} |v|$. Let $W_s^m(\Omega)=\{ v:D^\alpha v\in L^s(\Omega),0\le |\alpha|\le m,\|v\|\le \infty\}$ denote the standard Sobolev space with the associated norm $\|v\|_{W_s^k(\Omega)}=(\sum_{|\alpha|\le k}\|D^\alpha v\|^s_{L^s(\Omega)})^{1/s}$.
Let $W^{s,m}(\Omega)$ be the Sobolev space of functions in $L^m(\Omega)$, admitting weak derivatives up to order $s$ in the same space.
In particular, define $H^m(\Omega):= W^{2,m}(\Omega)$ as the special Sobolev space with $p=2$ and define $H_0^m(\Omega)=\{v\in H^1(\Omega):v|_{\partial \Omega}=0 \}$.
 For a Banach space $X$, we give $L^s(J;X)=\{v: (\int_J\|v\|_{X}^s)^{\tfrac1s}<\infty\}$.
For $v \in L^s(J;X)$, define the associated norm
$\Vert v\Vert_{ L^s(J;X)}= (\int_J\|v\|_X^s)^{\tfrac1s}$.
For $v\in L^2(J;L^2(\Omega))$, it is a square-integrable function in space $L^2(\Omega)$ in time.  
Moreover, 
%we define the space $L^\infty(J;X)=\{v(t):\text{ess sup}_{t \in J}\|v(t)\|_X<\infty \}$.
we also use $v_t$ to denote the partial derivative of the function $ v$ in $H^1(\Omega)$ with respect to $t$.
% For the formulas involving $X$ and $Y$, if there exists a real constant $C$, depending only on domain $\Omega$ and final time $\tau$, such that $X\le CY$, we will briefly denote this relationship as $X\lesssim Y$. Furthermore, if both $X\lesssim Y$ and $Y\lesssim X$ hold, we will denote this as $X\simeq Y$.

\subsection{A linear thermo-poroelasticity model}
We consider the following linear thermo-poroelasticity model, as described in \cite{brun2018upscaling, cai2025efficient, cai2025parameter}, to determine the displacement field $\bm{u}$, fluid pressure $p$, and temperature $T$ with respect to a reference value. The governing equations are given by
\begin{equation}\label{TP_model}
\begin{aligned}
-\nabla \cdot (2\mu \bm{\varepsilon}(\bm{u}) + \lambda \nabla \cdot \bm{u} \bm{I}) + \alpha \nabla p + \beta \nabla T &= \bm{f}, && \text{in } \Omega \times J, \\
\tfrac{\partial}{\partial t}(c_0 p - b_0 T + \alpha \nabla \cdot \bm{u}) - \nabla \cdot (\bm{K} \nabla p) &= g, && \text{in } \Omega \times J, \\
\tfrac{\partial}{\partial t}(a_0 T - b_0 p + \beta \nabla \cdot \bm{u}) - \nabla \cdot (\bm{\Theta} \nabla T) &=  H_s, && \text{in } \Omega \times J.
\end{aligned}
\end{equation}
The operator $\bm\varepsilon$ is defined as $\bm\varepsilon(\bm u)=\tfrac{1}{2}(\nabla\bm u+\nabla\bm u^T)$ and
the matrix $\bm I$  is the identity tensor. The right-hand side functions $\bm f$, $g$, and $H_{s}$ denote the body force, the mass source, and the heat source, respectively. $\alpha$ is the Biot-Willis constant and $\beta$ is the thermal stress coefficient. $a_0,b_0$, and $c_0$ represent the effective thermal capacity, the thermal dilation coefficient, and the specific storage coefficient, respectively. The matrix parameter $\bm K = (K_{ij})^d_{ij=1}$ 
denotes the permeability divided by fluid viscosity and matrix parameter  $\bm\Theta = (\Theta_{ij})^d_{ij=1}$
denotes the effective thermal conductivity. $\mu$ and $\lambda$ represent the Lamé parameters, which can be formulated in terms of Young's modulus $E$ and Poisson's ratio $\nu$:
\[
\lambda=  \tfrac{E\nu}{(1+\nu)(1-2\nu)},\qquad \mu=\tfrac{E}{2(1+\nu)}.
\]
%Note that in this paper, the coefficients and variables in the model are without units, which is also known as the dimensionless form.
The model in this paper is expressed in terms of dimensionless variables and coefficients.

To ensure the well-posedness of the problem, appropriate boundary and initial conditions must be imposed. In this work, we consider mixed boundary conditions for $\bm{u}$, with Dirichlet conditions on part of the boundary and Neumann conditions on the remainder. Specifically, we assume $\partial \Omega = \Gamma_d \cup \Gamma_n$ with $\Gamma_d \neq \partial \Omega$, where $\Gamma_d$ and $\Gamma_n$ denote the Dirichlet and Neumann boundaries for $\bm{u}$, respectively. For the pressure $p$ and temperature $T$, Dirichlet boundary conditions are prescribed. Without loss of generality, we assume all boundary conditions are homogeneous.
Specifically, the boundary conditions are given by
\begin{equation}\label{eq: B_C}
\begin{aligned}
    \bm{u} = 0, \quad& \text{on } \Gamma_d,\\
     ( 2\mu \bm{\varepsilon}(\bm{u}) + \lambda \nabla \cdot \bm{u} \bm{I} - \alpha  p\bm I - \beta  T \bm I)\bm n = 0 , \quad& \text{on } \Gamma_n   ,    \\
    p=0, \quad& \text{on } \partial\Omega,  \\
    T=0, \quad& \text{on } \partial\Omega.
\end{aligned}
\end{equation}
Here, $\bm{n}$ is the unit outward normal to the boundary.
The initial conditions are given by 
 \begin{equation}\label{eq: I_C}
\bm{u}(\cdot, 0) = \bm{u}^0, \quad p(\cdot, 0) = p^0, \quad T(\cdot, 0) = T^0.
\end{equation}

We now introduce the four-field formulation for the above model \eqref{TP_model}. More clearly, following the approach presented in literature in \cite{lee2017parameter, oyarzua2016locking, cai2025efficient}, we define an auxiliary variable to represent the volumetric contribution to the total stress:  
$$  
\xi = -\lambda \nabla \cdot \bm{u} + \alpha p + \beta T,
$$  
 which is commonly referred to as the pseudo-total pressure \cite{antonietti2023discontinuous}. Substituting this variable into equation \eqref{TP_model}, the four-field linear thermo-poroelasticity problem is as follows.

\begin{equation}\label{TP_Model_four}
\begin{aligned} 
       -2\mu \nabla\cdot \bm\varepsilon(\bm u) +\nabla\xi 
                      &=\bm f, \\
      -\lambda \nabla\cdot\bm u-\xi+\alpha p+\beta T
                      &=0, \\
-\tfrac\alpha\lambda\tfrac\partial{\partial t}\xi+(c_0+\tfrac{\alpha^2}\lambda)\tfrac\partial{\partial t}p
                 +(\tfrac{\alpha\beta}\lambda-b_0)\tfrac\partial{\partial t}T
                      -\nabla\cdot(\bm K\nabla p)&=g, \\
-\tfrac\beta\lambda\tfrac\partial{\partial t}\xi +(\tfrac{\alpha\beta}\lambda-b_0)\tfrac\partial{\partial t}p
+(a_0+\tfrac{\beta^2}\lambda)\tfrac\partial{\partial t}T
                  -\nabla\cdot(\bm \Theta\nabla T)
                      &= H_{s}. 
\end{aligned}
\end{equation}
We still use the initial condition (\ref{eq: I_C}) and the Dirichlet boundary conditions (\ref{eq: B_C}) for \eqref{TP_Model_four}. In practical situations, nonhomogeneous Dirichlet and Neumann boundary conditions are commonly encountered. The analysis performed for homogeneous boundary conditions can be straightforwardly extended to accommodate these cases.

As outlined in \cite{brun2020monolithic}, we make the following \textbf{Assumptions} for the model parameters throughout this paper (similar assumptions are also discussed in \cite{chen2022multiphysics, zhang2024coupling}):
 
(A1) $\bm{K}$ and $\bm\Theta$ are constant in time, symmetric, and positive definite. There exist $k_m>0$
and $k_M>0$ such that
$$ k_m|\bm v|^2 \le \bm v^T\bm K(x)\bm v,\quad |\bm K(x)\bm v|\le k_M|\bm v|,\quad \forall \bm v\in\mathbb R^d , $$
where $|\cdot|$ denotes the $l^2$-norm of a vector in $\mathbb R^d$.
There exist  $\theta_m>0$ and $\theta_M>0$ such that
$$ \theta_m|\bm v|^2 \le \bm v^T\bm\Theta(x)\bm v,\quad |\bm\Theta(x)\bm v|\le\theta_M|\bm v|,\quad \forall \bm v\in\mathbb R^d . $$

(A2) The constants \(\lambda\), \(\mu\), \(\alpha\) and \(\beta\) are strictly positive constants.

(A3) The constants $a_0,b_0 $ and $c_0$ are such that $a_0,~ c_0 > b_0 \ge 0$ .

In addition to \textbf{Assumptions} (A1)-(A3), throughout the remainder of this paper, we assume that the initial conditions satisfy \(\bm{u}^0 \in [H^1(\Omega)]^d\) and \(p^0, T^0 \in L^2(\Omega)\), while the source terms satisfy \(\bm{f} \in [L^2(\Omega)]^d\), \(g \in L^2(\Omega)\), and \( H_{s} \in L^2(\Omega)\).

%{\color{red} When we are working on error estimates, how can we guarantee the regularity of the solution?       Let me think about it. It seems that we can not ensure the regularity of the solution( I haven't found the literature that can ensure the regularity we need). We need some assumptions about the solution. See section 4}

Define the following spaces, 
$$
\bm V=[ H^1_{0,\Gamma_d}(\Omega) ]^d,~Q=L^2(\Omega),~W=H_0^1(\Omega). 
$$
The variational formulation for the four-field linear thermo-poroelasticity problem reads as: for almost every $t\in (0, \tau]$, find $(\bm u,~\xi,~p,~T)\in\bm v\times Q\times W\times W$ such that
\begin{equation}\label{TP_Model_var}
\begin{aligned} 
2\mu(\bm\varepsilon(\bm u), \bm\varepsilon(\bm v)) - (\nabla\cdot\bm v,\xi)&=(\bm f,\bm v)       ,~\forall \bm v\in\bm V,  \\
  -(\nabla\cdot\bm u,\phi)-\tfrac{1}{\lambda}(\xi,\phi)+\tfrac{\alpha}{\lambda}(p,\phi)+\tfrac{\beta}{\lambda}(T,\phi)&=0 ,~\forall\phi\in Q,\\
 -\tfrac\alpha\lambda( \xi_t ,q)+(c_0+\tfrac{\alpha^2}{\lambda})( p_t ,q) + (\tfrac{\alpha\beta}{\lambda}-b_0)( T_t ,q)
  +(\bm K\nabla p,\nabla q)&=(g,q),~\forall q\in W,\\
 -\tfrac\beta\lambda( \xi_t ,S)+(\tfrac{\alpha\beta}{\lambda}-b_0)( p_t ,S)+(a_0+\tfrac{\beta^2}{\lambda})( T_t ,S)   
  +(\bm\Theta\nabla T,\nabla S)&=( H_{s},S), ~\forall S\in W.\\
\end{aligned}
\end{equation}  
For this problem, the initial condition for $\xi$ is
\begin{equation}\label{eq: initial xi0}
 \xi(\cdot,0) = \xi^0 = -\lambda \nabla \cdot \bm{u}^0 + \alpha p^0 + \beta T^0. 
\end{equation}
Moreover, the initial conditions for other variables, \eqref{eq: I_C}, \eqref{eq: initial xi0}, and boundary condition \eqref{eq: B_C} are also applied to problem \eqref{TP_Model_var}.

In the remaining part of this paper, for convenience, we introduce the following parameter transformations:  
\[
%t_K = \Delta t K, ~ t_\theta = \Delta t \theta, ~ 
c_\alpha = \left(c_0 + \tfrac{\alpha^2}{\lambda}\right), ~ c_{\alpha\beta} = \left(\tfrac{\alpha \beta}{\lambda} - b_0\right), ~ c_\beta = \left(a_0 + \tfrac{\beta^2}{\lambda}\right).
\]  
Through a transformation process, the corresponding equations are converted as follows.
\begin{equation}\label{TP_Model_var after parameters transformation}
\begin{aligned} 
2\mu(\bm\varepsilon(\bm u), \bm\varepsilon(\bm v)) - (\nabla\cdot\bm v,\xi)&=(\bm f,\bm v)       ,~\forall \bm v\in\bm V,  \\
  -(\nabla\cdot\bm u,\phi)-\tfrac{1}{\lambda}(\xi,\phi)+\tfrac{\alpha}{\lambda}(p,\phi)+\tfrac{\beta}{\lambda}(T,\phi)&=0 ,~\forall\phi\in Q,\\
 -\tfrac\alpha\lambda( \xi_t ,q)+c_\alpha( p_t ,q) + c_{\alpha\beta}( T_t ,q)
  +(\bm K\nabla p,\nabla q)&=(g,q),~\forall q\in W,\\
 -\tfrac\beta\lambda( \xi_t ,S)+c_{\alpha\beta}( p_t ,S)+c_\beta( T_t ,S)   
  +(\bm\Theta\nabla T,\nabla S)&=( H_{s},S), ~\forall S\in W.\\
\end{aligned}
\end{equation}  

\section{Numerical algorithms  }\label{sec: algorithms}
%This section introduces numerical algorithms derived from the formulation of the four fields discussed previously.
Let $h$ denote the maximum diameter of the mesh elements, and for $h > 0$, let $\mathcal{T}_h$ be a family of triangulations of the domain $\Omega$ consisting of triangular elements. The triangulations are assumed to be shape-regular and quasi-uniform. The discrete approximation spaces for $\bm{u}$ and $\xi$ are denoted by $\bm{V}_h \times Q_h$, and they are required to satisfy the following stability (inf-sup) condition:
\begin{equation}\label{infsup for dis}
\begin{aligned}
\inf_{\xi\in Q_h}\sup_{\bm v\in \bm{V}_h}\frac{(\nabla\cdot\bm v,\xi)}{|\bm v|_ {H^1(\Omega)}|\xi|_{L^2(\Omega)}}\ge \gamma>0,
\end{aligned}
\end{equation}
where $\gamma$ is independent of $h$, indicating that $\bm{V}_h \times Q_h$ forms a stable Stokes pair. As a concrete example, we employ Taylor–Hood elements for $(\bm{u}, \xi)$, specifically $(\bm{\mathcal P}_k, \mathcal P_{k-1})$ Lagrange finite elements, along with $\mathcal P_l$ Lagrange finite elements for both the fluid pressure $p$ and temperature $T$. The finite element spaces defined on \(\mathcal{T}_h\) are as follows.
\[
\begin{aligned}
&\bm{V}_h = \left\{ \bm{v}_h \in [H_{0,\Gamma_d}^1(\Omega)]^d \cap \bm{C}^0(\bar{\Omega}) : \bm{v}_h|_K \in \mathcal{P}_k(K), \, \forall K \in \mathcal{T}_h \right\}, \\
&Q_h = \left\{ \phi_h \in L^2(\Omega) \cap C^0(\bar{\Omega}) : \phi_h|_K \in \mathcal{P}_{k-1}(K), \, \forall K \in \mathcal{T}_h \right\}, \\
&W_h = \left\{ q_h \in H_0^1(\Omega) \cap C^0(\bar{\Omega}) : q_h|_K \in \mathcal{P}_l(K), \, \forall K \in \mathcal{T}_h \right\}.
\end{aligned}
\]
 
For time discretization, we consider an equidistant partition consisting of a series of points from $0$ to $\tau$ with a constant step size of $\Delta t$. Specifically, we define $\bm u^n$ as $\bm u(t_n)$, where $t_n$ represents the $n$-th point in the partition. Similarly, we define $\xi^n=\xi(t_n)$ , $p^n=p(t_n)$ and $T^n=T(t_n)$.
The four-field formulation \eqref{TP_Model_var after parameters transformation}, when discretized in time using the backward Euler method, produces the following fully coupled algorithm.

\subsection{The coupled algorithm}
Suppose that an initial value $(\bm u_h^0,~\xi_h^0,~p_h^0,~T_h^0)\in \bm V_h\times Q_h\times W_h\times W_h $ is provided.
 Using the backward Euler method to \eqref{TP_Model_var after parameters transformation}, for the given finite element spaces $\bm V_h \subset \bm V$, $Q_h\subset Q$, $W_h\subset W $, the coupled algorithm reads as: find $(\bm u_h^{n+1},~\xi_h^{n+1},~p_h^{n+1},~T_h^{n+1})\in\bm V_h\times Q_h\times W_h\times W_h$
such that $\forall$ $(\bm v_h ,~\phi_h ,~q_h ,~S_h )\in\bm V_h\times Q_h\times W_h\times W_h$, 
\begin{subequations}
\begin{equation}\label{TP_Model_dis_a}
\begin{aligned} 
2\mu( \bm\varepsilon(\bm u^{n+1}_h),\bm\varepsilon(\bm v_h)) - (\nabla\cdot\bm v_h,\xi^{n+1}_h)&=(\bm f,\bm v_h),  
\end{aligned}
\end{equation}
\begin{equation}\label{TP_Model_dis_b}
\begin{aligned} 
  -(\nabla\cdot\bm u^{n+1}_h,\phi_h)-\tfrac{1}{\lambda}(\xi^{n+1}_h,\phi_h)+\tfrac{\alpha}{\lambda}(p^{n+1}_h,\phi_h)+\tfrac{\beta}{\lambda}(T^{n+1}_h,\phi_h)&=0 ,\\
\end{aligned}
\end{equation}
\begin{equation}\label{TP_Model_dis_c}
\begin{aligned} 
 -  \tfrac\alpha\lambda(\xi_h^{n+1},q_h)+c_\alpha(p_h^{n+1},~q_h)
 + c_{\alpha\beta}(T_h^{n+1},~q_h) 
                +   \Delta t( \bm K\nabla  p^{n+1}_h,~\nabla q_h)&= \\
          c_\alpha(p_h^{n},~q_h)
           + c_{\alpha\beta}(T_h^{n},~q_h)
            -  \tfrac\alpha\lambda(\xi_h^{n},~q_h)
           &+\Delta t(g,~q_h),\\
\end{aligned}
\end{equation}
\begin{equation}\label{TP_Model_dis_d}
\begin{aligned}   
  -\tfrac\beta\lambda(\xi_h^{n+1},S_h)+c_\beta(T_h^{n+1},S_h)
  +c_{\alpha\beta}(p_h^{n+1},~S_h)  
            +\Delta t(\bm\Theta\nabla T^{n+1}_h,~\nabla S_h) &    =         \\
        c_\beta(T_h^{n},~S_h) 
           +c_{\alpha\beta}(p_h^{n},~S_h)    
               -\tfrac\beta\lambda(\xi_h^{n},~S_h)
           +\Delta t& ( H_{s},~S_h).\\
\end{aligned}
\end{equation}
\end{subequations}

The coupled algorithm is unconditionally stable and convergent. Further details on error estimates and stability analysis can be found in \cite{cai2025parameter, cai2025efficient}.

\subsection{Two sequential semi-decoupled algorithms}

Several semi-decoupled algorithms have been developed for Biot’s model \cite{cai2023some, zhao2025optimally}. In these methods, the first time step requires solving all variables simultaneously, while in subsequent steps, the system is partitioned into two subproblems that are solved either sequentially or in parallel, without the need for iteration between them. In this work, we extend these ideas to the thermo-poroelastic model. For the linear thermo-poroelasticity considered here, the initial step is given as follows.

\textbf{Initial step}: set $n=0$ in (\ref{TP_Model_dis_a})-(\ref{TP_Model_dis_d}). That is, 
given \((\bm u_h^0,~\xi_h^0,~p_h^0,~T_h^0)\in\bm V_h\times Q_h\times W_h\times W_h\)
find $(\bm u_h^1,~\xi_h^1,~p_h^1,~T_h^1)\in\bm V_h\times Q_h\times W_h\times W_h$ using (\ref{TP_Model_dis_a})-(\ref{TP_Model_dis_d}).
%, such that $\forall$ 
%$(\bm v_h,~\phi_h,~q_h,~S_h)\in\bm V_h\times Q_h\times W_h\times W_h$, 
%\begin{subequations}
%\begin{equation}\label{StR initial a}
%\begin{aligned} 
%2\mu( \bm\varepsilon(\bm u^1_h),\bm\varepsilon(\bm v_h)) - (\nabla\cdot\bm v_h,\xi^1_h)&=(\bm f^1,\bm v_h)  
%\end{aligned}
%\end{equation}
%\begin{equation}\label{StR initial b}
%\begin{aligned} 
%  -(\nabla\cdot\bm u^1_h,\phi_h)-\tfrac{1}{\lambda}(\xi^1_h,\phi_h)+\tfrac{\alpha}{\lambda}(p^1_h,\phi_h)+\tfrac{\beta}{\lambda}(T^1_h,\phi_h)&=0 ,\\
%\end{aligned}
%\end{equation}
%\begin{equation}\label{StR initial c}
%\begin{aligned} 
% -  \tfrac\alpha\lambda(\xi_h^1,q_h)+(c_0+\tfrac{\alpha^2}{\lambda})(p_h^1,~q_h)
% + (\tfrac{\alpha\beta}{\lambda}-b_0)(T_h^1,~q_h) 
%                +   \Delta t( \bm K\nabla  p^1_h,~\nabla q_h)&= \\
%          (c_0+\tfrac{\alpha^2}{\lambda})(p_h^{0},~q_h)
%           + (\tfrac{\alpha\beta}{\lambda}-b_0)(T_h^{0},~q_h)
%            -  \tfrac\alpha\lambda(\xi_h^0,~q_h)
%           &+\Delta t(g^1,~q_h),\\
%\end{aligned}
%\end{equation}
%\begin{equation}\label{StR initial d}
%\begin{aligned}   
%  -\tfrac\beta\lambda(\xi_h^1,S_h)+(a_0+\tfrac{\beta^2}{\lambda})(T_h^1,S_h)
%  +(\tfrac{\alpha\beta}{\lambda}-b_0)(p_h^1,~S_h)  
%            +\Delta t(\bm\Theta\nabla T^1_h,~\nabla S_h) &    =         \\
%        (a_0+\tfrac{\beta^2}{\lambda})(T_h^{0},~S_h) 
%           +(\tfrac{\alpha\beta}{\lambda}-b_0)(p_h^{0},~S_h)    
%               -\tfrac\beta\lambda(\xi_h^{0},~S_h)
%           +\Delta t&( H_{s}^1,~S_h).\\
%\end{aligned}
%\end{equation}
%\end{subequations}
 
In this work, we explore decoupling strategies to develop semi-decoupled algorithms for the thermo-poroelastic model. The key observation is that for \eqref{TP_Model_dis_a}–\eqref{TP_Model_dis_b}, moving the terms involving $p$ and $T$ to the right-hand side yields a system that only depends on $(\bm u, \xi)$, corresponding to a mixed formulation of the elasticity problem. Once this mixed elasticity subproblem is solved, the updated data can be used to solve a second subproblem involving $p$ and $T$. In this formulation, shifting the terms involving $(\bm u, \xi)$ to the right-hand side transforms \eqref{TP_Model_dis_c} and \eqref{TP_Model_dis_d} into a coupled reaction–diffusion system. Our first semi-decoupled scheme is therefore presented as \textbf{Algorithm 1} below.

\begin{algorithm}[H] 
%\scriptsize
\caption{ First solve the mixed linear elasticity equations and then solve the coupled reaction–diffusion equations.}
\begin{algorithmic}[1]

\State \textbf{Initial step:} set $n=0$ in (\ref{TP_Model_dis_a})-(\ref{TP_Model_dis_d}).

\For{$n=1, \ldots, N-1$}

    \State \textbf{Step 1. Solving the mixed elasticity equations:} \\
    \hspace{1em} Given \(p^n_h, T^n_h \in W_h\), find 
    \( (\bm u^{n+1}_h, \xi^{n+1}_h) \in \bm V_h \times Q_h \) such that 
    $\forall$ \( (\bm v_h, \phi_h) \in \bm V_h \times Q_h \),
    \begin{subequations}
    \begin{equation}\label{decoupled StR a}
    \begin{aligned} 
    2\mu( \bm\varepsilon(\bm u^{n+1}_h), \bm\varepsilon(\bm v_h))
    - (\nabla \cdot \bm v_h, \xi^{n+1}_h)
    &= (\bm f^{n+1}, \bm v_h),  
    \end{aligned}
    \end{equation}
    \begin{equation}
    \begin{aligned} 
    (\nabla \cdot \bm u^{n+1}_h, \phi_h)
    + \tfrac{1}{\lambda}(\xi^{n+1}_h, \phi_h)
    &= \tfrac{\alpha}{\lambda}(p^n_h, \phi_h)
     + \tfrac{\beta}{\lambda}(T^n_h, \phi_h).
    \end{aligned}
    \end{equation}
    \end{subequations}

    \State \textbf{Step 2. Solving the coupled reaction–diffusion equations:} \\
    \hspace{1em} Find \( (p^{n+1}_h, T^{n+1}_h) \in W_h \times W_h \) such that 
    $\forall$ \( (q_h, S_h) \in W_h \times W_h \),
    \begin{subequations}
    \begin{equation}\label{decoupled StR c}
    \begin{aligned} 
    c_\alpha \Big( \tfrac{p^{n+1}_h - p^n_h}{\Delta t}, q_h \Big)
    + (\bm K \nabla p^{n+1}_h, \nabla q_h)&+ c_{\alpha\beta}\Big( \tfrac{T^{n+1}_h - T^n_h}{\Delta t}, q_h \Big)\\
    &= \tfrac{\alpha}{\lambda}\Big( \tfrac{\xi^{n+1}_h - \xi^n_h}{\Delta t}, q_h \Big)
      + (g^{n+1}, q_h),
    \end{aligned}
    \end{equation}
    \begin{equation}\label{decoupled StR d}
    \begin{aligned}   
    c_\beta\Big( \tfrac{T^{n+1}_h - T^n_h}{\Delta t}, S_h \Big) 
    + (\bm\Theta \nabla T^{n+1}_h, \nabla S_h)  
    &+ c_{\alpha\beta}\Big( \tfrac{p^{n+1}_h - p^n_h}{\Delta t}, S_h \Big)\\
    &= \tfrac{\beta}{\lambda}\Big( \tfrac{\xi^{n+1}_h - \xi^n_h}{\Delta t}, S_h \Big) 
      + (H_{s}^{n+1}, S_h).
    \end{aligned}
    \end{equation}
    \end{subequations}

\EndFor

\end{algorithmic}
\end{algorithm}

Alternatively, after the initial step, one may first solve the coupled reaction--diffusion subproblem 
\eqref{TP_Model_dis_c}--\eqref{TP_Model_dis_d}. 
The newly computed results \((p_h^{n+1}, T_h^{n+1})\) are then used in the subproblem 
\eqref{TP_Model_dis_a}--\eqref{TP_Model_dis_b} to obtain \((\bm u_h^{n+1}, \xi_h^{n+1})\). 
Hence, following a procedure analogous to \textbf{Algorithm~1}, 
we arrive at the next scheme, which we refer to as \textbf{Algorithm~2}.

\begin{algorithm}[H] 
%\scriptsize
\caption{First solve the coupled reaction--diffusion equations and then solve the mixed elasticity equations.}
\begin{algorithmic}[1]

\State \textbf{Initial step:} set $n=0$ in (\ref{TP_Model_dis_a})-(\ref{TP_Model_dis_d}).

\For{$n=1, \ldots, N-1$}

    \State \textbf{Step 1. Solving the reaction--diffusion equations:} \\
    \hspace{1em} Given \(\xi^{n-1}_h, \xi^n_h \in Q_h\), find 
    \((p^{n+1}_h, T^{n+1}_h) \in W_h \times W_h\) such that 
    for all \((q_h, S_h) \in W_h \times W_h\),
    \begin{subequations}
    \begin{equation}\label{decoupled RtS a}
    \begin{aligned} 
    c_\alpha\Big( \tfrac{p^{n+1}_h-p^n_h}{\Delta t}, q_h \Big)
     + (\bm K \nabla p^{n+1}_h, \nabla q_h) 
    &+ c_{\alpha\beta}\Big( \tfrac{T^{n+1}_h-T^n_h}{\Delta t}, q_h \Big) \\
    & = \tfrac{\alpha}{\lambda}\Big( \tfrac{\xi^n_h-\xi^{n-1}_h}{\Delta t}, q_h \Big) + (g^{n+1}, q_h),
    \end{aligned}
    \end{equation}
    \begin{equation}\label{decoupled RtS b}
    \begin{aligned}   
    c_\beta\Big( \tfrac{T^{n+1}_h-T^n_h}{\Delta t}, S_h \Big)
     + (\bm\Theta \nabla T^{n+1}_h, \nabla S_h)  
    &+ c_{\alpha\beta}\Big( \tfrac{p^{n+1}_h-p^n_h}{\Delta t}, S_h \Big) \\
    & = \tfrac{\beta}{\lambda}\Big( \tfrac{\xi^n_h-\xi^{n-1}_h}{\Delta t}, S_h \Big) + (H_s^{n+1}, S_h).
    \end{aligned}
    \end{equation}
    \end{subequations}

    \State \textbf{Step 2. Solving the mixed elasticity equations:} \\
    \hspace{1em} Find \((\bm u^{n+1}_h, \xi^{n+1}_h) \in \bm V_h \times Q_h\) such that 
    for all \((\bm v_h, \phi_h) \in \bm V_h \times Q_h\),
    \begin{subequations}
    \begin{equation}\label{decoupled RtS c}
    \begin{aligned} 
    2\mu(\bm\varepsilon(\bm u^{n+1}_h), \bm\varepsilon(\bm v_h))
    - (\nabla \cdot \bm v_h, \xi^{n+1}_h)
    &= (\bm f^{n+1}, \bm v_h),
    \end{aligned}
    \end{equation}
    \begin{equation}\label{decoupled RtS d}
    \begin{aligned} 
    (\nabla \cdot \bm u^{n+1}_h, \phi_h)
    + \tfrac{1}{\lambda}(\xi^{n+1}_h, \phi_h)
    &= \tfrac{\alpha}{\lambda}(p^{n+1}_h, \phi_h)
     + \tfrac{\beta}{\lambda}(T^{n+1}_h, \phi_h).
    \end{aligned}
    \end{equation}
    \end{subequations}

\EndFor

\end{algorithmic}
\end{algorithm}

\subsection{A parallel semi-decoupled  algorithm}
Both \textbf{Algorithm~1} and \textbf{Algorithm~2} are inherently sequential, since one subproblem depends on the results of the other. In particular, the second subproblem cannot be solved until the first is completed, which introduces idle time and significantly increases the overall computational cost. This motivates the development of a parallel strategy.

The key observation is as follows. In \textbf{Algorithm~1}, the reaction--diffusion subproblem requires the values 
\((p_h^{n+1}, T_h^{n+1})\) obtained from the mixed elasticity subproblem. However, if the time derivatives 
\(\tfrac{p_h^{n+1}-p_h^n}{\Delta t}\) and \(\tfrac{T_h^{n+1}-T_h^n}{\Delta t}\) are instead approximated using the previous time-step values 
\(\tfrac{p_h^n-p_h^{n-1}}{\Delta t}\) and \(\tfrac{T_h^n-T_h^{n-1}}{\Delta t}\), as in the preceding subsection, then the reaction--diffusion subproblem no longer depends on the mixed elasticity subproblem. Consequently, both subproblems can be solved concurrently at the same time level. A similar modification applies to \textbf{Algorithm~2}. By approximating 
\(\tfrac{\xi_h^{n+1}-\xi_h^n}{\Delta t}\) with 
\(\tfrac{\xi_h^n-\xi_h^{n-1}}{\Delta t}\) in the mixed elasticity subproblem, the dependency is removed and parallel execution becomes possible. Thus, both algorithms lead to the same parallel implementation, i.e., \textbf{Algorithm 3} described below.

%We emphasize that an initial step is still necessary to compute \((\bm{u}_h^1, \xi_h^1, p_h^1, T_h^1)\).

\begin{algorithm} [H] 
%\scriptsize
\caption{  Solve in parallel the mixed elasticity equations and the reaction--diffusion equations.}
\begin{algorithmic}[1]
\State \textbf{Initial step:} set $n=0$ in (\ref{TP_Model_dis_a})-(\ref{TP_Model_dis_d}).
%: The same as \eqref{StR initial a}-\eqref{StR initial d}.
\For{$n=1, \ldots, N-1$} 
\State \textbf{Solving following subproblems in parallel}:
\begin{itemize}
    \item Given \(p^n\), \(T^n\in W_h\), find 
 \( (\bm u^{n+1}_h, ~\xi^{n+1}_h)\in \bm V_h\times Q_h\) such that $\forall$ $(\bm v_h,~\phi_h)\in\bm V_h\times Q_h$
\begin{subequations}
\begin{equation}\label{decoupled SRP a}
\begin{aligned} 
2\mu( \bm\varepsilon(\bm u^{n+1}_h),\bm\varepsilon(\bm v_h)) - (\nabla\cdot\bm v_h,\xi^{n+1}_h)&=(\bm f^{n+1},\bm v_h),  
\end{aligned}
\end{equation}
\begin{equation}\label{decoupled SRP b}
\begin{aligned} 
   (\nabla\cdot\bm u^{n+1}_h,\phi_h)
  +\tfrac{1}{\lambda}(\xi^{n+1}_h,\phi_h)
  &=\tfrac{\alpha}{\lambda}(p^{n}_h,\phi_h)
  +\tfrac{\beta}{\lambda}(T^n_h,\phi_h) .\\
\end{aligned}
\end{equation}
\end{subequations}
    \item Given \(\xi_h^{n-1}\), \(\xi_h^{n}\in Q_h\), 
 find \((p_h^{n+1},~T_h^{n+1})\)\(\in W_h\times W_h\) such that $\forall$ \((q_h,~S_h)\in W_h\times W_h\) 
\begin{subequations}
\begin{equation}\label{decoupled SRP c}
\begin{aligned} 
  c_\alpha( \tfrac{p^{n+1}_h-p^n_h}{\Delta t} ,q_h)
 +\bm (\bm K\nabla p_h^{n+1},\nabla q_h)
 &+ c_{\alpha\beta}( \tfrac{T^{n+1}_h-T^n_h}{\Delta t} ,q_h)\\
              &  =\tfrac\alpha\lambda( \tfrac{\xi^{n}_h-\xi^{n-1}_h}{\Delta t} ,q_h)+(g^{n+1},q_h),
\end{aligned}
\end{equation}
\begin{equation}\label{decoupled SRP d}
\begin{aligned}    
  c_\beta( \tfrac{T^{n+1}_h-T^n_h}{\Delta t} ,S_h) +(\bm\Theta\nabla T^{n+1},\nabla S_h)
 &+c_{\alpha\beta}(  \tfrac{p^{n+1}_h-p^n_h}{\Delta t} ,S_h) \\  
 &=
 \tfrac\beta\lambda( \tfrac{\xi^{n}_h-\xi^{n-1}_h}{\Delta t} ,S_h) +( H_{s}^{n+1},S_h).
\end{aligned}
\end{equation}
\end{subequations}
\end{itemize}
\EndFor

\end{algorithmic}
\end{algorithm}

\begin{remark}\label{re: wellposedness} In all these semi-decoupled algorithms, when solving the coupled reaction--diffusion system in its weak variational form, 
the problem can be equivalently written as the following operator equation:
\[
\begin{bmatrix}
   c_\alpha\bm I - \Delta t \bm K \Delta 
   & c_{\alpha\beta}\bm I \\[6pt]
   c_{\alpha\beta}\bm I 
   & c_\beta\bm I - \Delta t \bm\Theta \Delta
\end{bmatrix}
\begin{bmatrix}
   p_h \\[6pt]
   T_h
\end{bmatrix}
= \bm F,
\]
where $\bm I$ denotes the identity operator and $\bm F$ represents the right-hand side term.  
Under \textbf{Assumptions} (A2)--(A3), the coefficient matrix is symmetric and positive definite.  
Therefore, equations \eqref{decoupled StR c}--\eqref{decoupled StR d} are well-posed.
\end{remark}
\begin{remark}\label{re: innitial step}
We can observe that in \textbf{Step 1} of \textbf{Algorithm 1}, only the data from the 
n-th step are utilized, rendering the initial step unnecessary. However, to maintain consistency with \textbf{Algorithm 2} and \textbf{Algorithm 3}, as well as to facilitate comparative analysis of error data, the initial step is retained.

\end{remark}
%According to the same reasoning, it can be deduced that equations\eqref{decoupled RtS a}-\eqref{decoupled RtS b} and \eqref{decoupled SRP c}-\eqref{decoupled SRP d} are well-posed. 

\section{Convergence analysis}\label{sec: convergent}

In this section, we present the convergence analysis and derive error estimates for all semi-decoupled algorithms.  
We begin by imposing the following regularity assumptions.  

Assume that 
\[
\bm{u} \in L^{\infty}(J;\bm{H}_{0,\Gamma_d}^{k+1}(\Omega)), \quad 
\partial_t \bm{u} \in L^{2}(J;\bm{H}_{0,\Gamma_d}^{k+1}(\Omega)), \quad 
\partial_{tt} \bm{u} \in L^{2}(J;\bm{H}_{0,\Gamma_d}^{1}(\Omega)),
\]
\[
\xi \in L^{\infty}(J;H^{k}(\Omega)), \quad 
\partial_t \xi \in L^{2}(J;H^{k}(\Omega)), \quad 
\partial_{tt} \xi \in L^{2}(J;L^{2}(\Omega)),
\]
\[
p \in L^{\infty}(J;H_{0}^{l+1}(\Omega)), \quad 
\partial_t p \in L^{2}(J;H_{0}^{l+1}(\Omega)), \quad 
\partial_{tt} p \in L^{2}(J;L^{2}(\Omega)),
\]
\[
T \in L^{\infty}(J;H_{0}^{l+1}(\Omega)), \quad 
\partial_t T \in L^{2}(J;H_{0}^{l+1}(\Omega)), \quad 
\partial_{tt} T \in L^{2}(J;L^{2}(\Omega)).
\]

Next, we introduce suitable projection operators.  
For the discrete finite element spaces \( \bm{V}_h \), \( Q_h \), and \( W_h \) defined in Section~\ref{sec: algorithms}, we define projections
\[
\Pi^{\bm{V}_h}: \bm{V} \to \bm{V}_h, \quad 
\Pi^{Q_h}: Q \to Q_h, \quad 
\Pi^{W_{p,h}}: W \to W_h, \quad 
\Pi^{W_{T,h}}: W \to W_h,
\]
which satisfy the following relations for all 
\((\bm v_h,\phi_h,p_h,T_h)\in \bm V_h\times Q_h\times W_h\times W_h\):
\begin{equation}\label{Pi V}
   2\mu(\bm\varepsilon(\Pi^{\bm V_h}\bm u ),\bm\varepsilon( \bm v_h ))
   - (\nabla\cdot\bm v_h,\Pi^{Q_h} \xi)
   = 2\mu(\bm\varepsilon( \bm u ),\bm\varepsilon( \bm v_h ))
   - (\nabla\cdot\bm v_h, \xi),
\end{equation} 
\begin{equation}\label{Pi Q}
   (\nabla\cdot\Pi^{\bm V_h}\bm u,\phi_h) = (\nabla\cdot \bm u,\phi_h),
\end{equation} 
\begin{equation}\label{Pi Wp}
   \bm K(\nabla\Pi^{W_{p,h}}p,\nabla q_h) = \bm K(\nabla p,\nabla q_h),
\end{equation} 
\begin{equation}\label{Pi WT}
   \bm \Theta(\nabla\Pi^{W_{T,h}}T,\nabla S_h) = \bm\Theta(\nabla T,\nabla S_h).
\end{equation}
Furthermore, for the operators $\Pi^{\bm V_h}$,~$\Pi^{Q_h} $,~$\Pi^{W_{p,h}}$, and $\Pi^{W_{T,h}}$, if $\bm u\in \bm H_{0,\Gamma_d}^{k+1}(\Omega)$, $\xi\in H^k(\Omega)$, $p,~T\in H_0^{l+1}$ we have following
 properties. 
\begin{equation}\label{Pi V Q properties}
   \|\nabla(\Pi^{\bm V_h}\bm u-\bm u)\|_{L^2(\Omega)}+\|\Pi^{Q_h} \xi-\xi\|_{L^2(\Omega)}
   \lesssim h^k\|\bm u\|_{H^{k+1}(\Omega)}+h^k\|\xi\|_{H^{k}(\Omega)}, 
\end{equation} 
\begin{equation}\label{Pi Wp properties}
   \| \Pi^{W_{p,h,}}p-p \|_{L^2(\Omega)}+h\|\nabla(\Pi^{W_{p,h,}}p-p)\|_{L^2(\Omega)}
   \lesssim h^{l+1} \|p\|_{H^{l+1}(\Omega)},
\end{equation} 
and
\begin{equation}\label{Pi WT properties}
     \| \Pi^{W_{T,h}}T-T \|_{L^2(\Omega)}+h\|\nabla(\Pi^{W_{T,h}}T-T)\|_{L^2(\Omega)}
   \lesssim h^{l+1} \|T\|_{H^{l+1}(\Omega)}.
\end{equation}

% \begin{lemma}\label{Korn's inequa}
% There exists a constant $C_K$ such that the Korn’s inequality 
% \begin{equation*}
%                C_K \Vert  \bm  u\Vert_1^2   \le  \Vert\bm\varepsilon(\bm u)\Vert_0^2,
%                \quad\forall \bm u\in [H^1_0(\Omega)]^d,  d=2,3.
% \end{equation*}
% holds true \cite{brenner2004korn}. %\miro{It should be $[H_0^1(\Omega)]^d$?}
% \end{lemma}
We also need to cite the following two lemmas, which are frequently used in our subsequent analysis. These two lemmas are easy to obtain, and their proofs can also be found in \cite{gu2023priori}.
\begin{lemma}\label{le: B(u,v)}
    Let \(B\) be a symmetric bilinear form. Then \[     
    2B(u,u-v)=B(u,u)-B(v,v)+B(u-v,u-v),     \] is satisfied, which immediately implies the following inequality \[
    2B(u,u-v)\ge B(u,u)-B(v,v).
    \]
\end{lemma}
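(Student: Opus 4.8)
The plan is to establish the identity by a direct expansion using bilinearity and symmetry, and then to obtain the inequality by discarding a manifestly nonnegative term. First I would expand the quadratic term $B(u-v,u-v)$ using bilinearity in both arguments, writing
\[
B(u-v,u-v)=B(u,u)-B(u,v)-B(v,u)+B(v,v).
\]
Invoking the symmetry hypothesis $B(u,v)=B(v,u)$, this collapses to $B(u,u)-2B(u,v)+B(v,v)$. This is the only place where symmetry is used, and it is what allows the cross terms to combine cleanly.

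Next I would substitute this expression into the right-hand side $B(u,u)-B(v,v)+B(u-v,u-v)$. The two occurrences of $\pm B(v,v)$ cancel, leaving $2B(u,u)-2B(u,v)$, which by linearity in the second argument equals $2B(u,u-v)$. This establishes the claimed identity $2B(u,u-v)=B(u,u)-B(v,v)+B(u-v,u-v)$.

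For the inequality, I would observe that in every application of this lemma the bilinear form $B$ is in fact symmetric positive semidefinite (for instance the elastic form $2\mu(\bm\varepsilon(\cdot),\bm\varepsilon(\cdot))$ and the diffusion forms $(\bm K\nabla\cdot,\nabla\cdot)$ and $(\bm\Theta\nabla\cdot,\nabla\cdot)$ appearing in the energy estimates), so that $B(u-v,u-v)\ge 0$. Dropping this nonnegative term from the identity immediately yields $2B(u,u-v)\ge B(u,u)-B(v,v)$.

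There is no genuine obstacle here: the computation is entirely elementary and relies only on bilinearity and symmetry. The single point requiring attention is that the inequality—unlike the identity—depends on the positive semidefiniteness of $B$ rather than on symmetry alone; since all the forms to which we apply Lemma~\ref{le: B(u,v)} are nonnegative, the statement holds in exactly the form needed for the subsequent stability and convergence arguments.
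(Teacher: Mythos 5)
Your proof is correct and is the standard (indeed the only natural) argument; the paper itself omits the proof entirely, deferring to a cited reference, so there is no competing approach to compare against. Your closing observation is also well taken and worth keeping: as literally stated, the inequality does not follow from symmetry alone but requires $B(u-v,u-v)\ge 0$, i.e.\ positive semidefiniteness of $B$, which does hold for every form to which the lemma is applied in the stability and convergence estimates (the elastic form and the $\bm K$- and $\bm\Theta$-weighted diffusion forms).
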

\begin{lemma}\label{le: Taylor}
    Let \(f\) be a function that has \(k + 1\) continuous derivatives on an
open interval \( (a,b) \). For any \(t_0 ,t \in (a,b)\), there holds
\[
f(t) = f(t_0) + f'(t_0)(t - t_0) + \cdots + \tfrac{f^{(k)}(t_0)}{k!}(t - t_0)^k + \tfrac{1}{k!} \int_{t_0}^{t} f^{(k+1)}(s)(t - s)^k ds. 
\]
Then, the following estimate for the $L^2$-norm of the last term holds,
    \[
\left\| \tfrac{1}{k!} \int_{t_0}^t f^{(k+1)}(s)(t-s)^k ds \right\|_{L^2(\Omega)}^2 \lesssim (b-a)^{2k+1} \left| \int_{t_0}^t \| f^{(k+1)} \|_{L^2(\Omega)}^2 ds \right|.  
    \]
\end{lemma}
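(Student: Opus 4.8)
The statement has two parts, and the plan is to dispatch them in turn. The Taylor identity with integral remainder is classical, so I would only indicate the mechanism: argue by induction on $k$, taking the fundamental theorem of calculus $f(t)=f(t_0)+\int_{t_0}^t f'(s)\,ds$ as the base case $k=0$, and performing the inductive step by integrating the order-$(k-1)$ remainder by parts, differentiating $f^{(k)}$ and integrating the weight $(t-s)^{k-1}/(k-1)!$. The boundary contribution reproduces the next Taylor term $f^{(k)}(t_0)(t-t_0)^k/k!$, and the surviving integral is exactly the order-$k$ remainder. Since in our setting $f$ is an $L^2(\Omega)$-valued function of time whose derivatives up to order $k+1$ lie in $L^2(J;L^2(\Omega))$ by the regularity hypotheses, these manipulations are read in the Bochner sense and require no modification of the scalar argument.

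The substance of the lemma is the estimate for the remainder $R:=\frac{1}{k!}\int_{t_0}^t f^{(k+1)}(s)(t-s)^k\,ds$. The plan is to bound $\|R\|_{L^2(\Omega)}$ in two moves. First, I would pull the spatial norm inside the time integral via Minkowski's integral inequality, obtaining
\[
\|R\|_{L^2(\Omega)}\le \frac{1}{k!}\,\Big|\int_{t_0}^t |t-s|^k\,\|f^{(k+1)}(s)\|_{L^2(\Omega)}\,ds\Big|.
\]
Second, I would apply the Cauchy--Schwarz inequality in the time variable, separating the scalar polynomial weight from the norm factor:
\[
\Big|\int_{t_0}^t |t-s|^k\,\|f^{(k+1)}(s)\|_{L^2(\Omega)}\,ds\Big|
\le \Big(\int_{t_0}^t |t-s|^{2k}\,ds\Big)^{1/2}
     \Big|\int_{t_0}^t \|f^{(k+1)}(s)\|_{L^2(\Omega)}^2\,ds\Big|^{1/2}.
\]

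It remains to evaluate the weight integral. A direct computation gives $\int_{t_0}^t |t-s|^{2k}\,ds=\dfrac{|t-t_0|^{2k+1}}{2k+1}$, which is bounded by $\dfrac{(b-a)^{2k+1}}{2k+1}$ because $t_0,t\in(a,b)$. Squaring the chain of inequalities and absorbing the purely $k$-dependent constant $\dfrac{1}{(k!)^2(2k+1)}$ into the symbol $\lesssim$ yields exactly
\[
\|R\|_{L^2(\Omega)}^2\lesssim (b-a)^{2k+1}\,\Big|\int_{t_0}^t \|f^{(k+1)}\|_{L^2(\Omega)}^2\,ds\Big|,
\]
where the absolute value on the right accommodates the case $t<t_0$. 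I do not expect any genuine obstacle here: the only point deserving a word of justification is the first move, namely the interchange of the $L^2(\Omega)$-norm with the time integral, which is precisely Minkowski's integral inequality for a Bochner-integrable, $L^2(\Omega)$-valued integrand; everything else is elementary calculus and constant bookkeeping.
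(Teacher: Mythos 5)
Your proof is correct: the paper itself omits the proof of this lemma (deferring to a citation), and your argument—Minkowski's integral inequality to pull the $L^2(\Omega)$-norm inside the time integral, Cauchy--Schwarz in time to split off the weight $\left(\int_{t_0}^{t}|t-s|^{2k}\,ds\right)^{1/2}\le\left((b-a)^{2k+1}/(2k+1)\right)^{1/2}$, then squaring—is the standard and intended route. The constant bookkeeping and the handling of the case $t<t_0$ via absolute values are both handled properly, so nothing is missing.
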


All semi-decoupled algorithm share the same initial step, for which the following error estimates hold for $(\bm{u}_h^1,~\xi_h^1,~p_h^1,~T_h^1)$. These estimates follow directly from Theorem 4.2 for the coupled algorithm presented in \cite{cai2025efficient}.
\begin{lemma}\label{le: couple N=1}
Let \((\bm u,\,\xi,\,p,\,T)\) denote the solution of \eqref{TP_Model_var}, and let \((\bm u_h^1,\,\xi_h^1,\,p_h^1,\,T_h^1)\) be the solution of (\ref{TP_Model_dis_a})–(\ref{TP_Model_dis_d}) at \(n=0\). Then, the following estimate holds:
   \begin{equation}\label{}
    \begin{aligned} 
    & %\max_{0\le n\le l}\Big[
       \mu\Vert\bm\varepsilon(e_{\bm u}^{h,1})\Vert_{L^2(\Omega)}^2  
      +\tfrac{c_0-b_0}{2} \Vert  e_p^{h,1}\Vert_{L^2(\Omega)}^2  
       +\tfrac{a_0-b_0}{2} \Vert  e_T^{h,1}\Vert_{L^2(\Omega)}^2 
                      \\%
     & +\Delta t(\bm K\nabla e_p^{h,1},\nabla e_p^{h,1})
        +\Delta t(\bm K\nabla e_T^{h,1},\nabla e_T^{h,1})\\                 
    \le &  \tilde C_1\Delta t^2 + \tilde C_2 h^{2k} + \tilde C_3 h^{2l+2},  
    \end{aligned}
    \end{equation} 
     where
     \begin{equation*}  
    \begin{aligned} 
    &  \tilde C_1   =\tilde C\left(  \int_{0}^{\tau}\| \bm u_{tt}  \|_{H^1(\Omega)}^2 
                  +\int_{0}^{\tau}\| \xi_{tt}  \|_{L^2(\Omega)}^2 
                  +\int_{0}^{\tau}\| p_{tt}  \|_{L^2(\Omega)}^2 
                +\int_{0}^{\tau}\| T_{tt}  \|_{L^2(\Omega)}^2\right),\\ 
    &  \tilde C_2   =\tilde C \left( \int_{0}^{\tau}\| \xi_t  \|_{H^2(\Omega)}^2  
               +\int_{0}^{\tau}\| \bm u_t  \|_{H^2(\Omega)}^2\right),  \\  
    &  \tilde C_3   =\tilde C \left(
               \int_{0}^{\tau}\| p_t  \|_{H^2(\Omega)}^2 
               +  \int_{0}^{\tau}\| T_t  \|_{H^2(\Omega)}^2\right),  
    \end{aligned}
    \end{equation*} 
    where  \(\tilde C=\tilde C(\alpha,\beta,\lambda,a_0,b_0,c_0)\). 
\end{lemma}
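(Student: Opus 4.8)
Since the initial step of all three semi-decoupled schemes is precisely the fully coupled backward-Euler discretization \eqref{TP_Model_dis_a}--\eqref{TP_Model_dis_d} taken at $n=1$, the estimate is the one-step instance of the coupled convergence result cited from \cite{cai2025efficient}, and the plan is to reproduce that monolithic energy argument at a single time level. First I would introduce the error splitting through the projections of \eqref{Pi V}--\eqref{Pi WT}, writing $e_{\bm u}^{h,1}=\rho_{\bm u}+\theta_{\bm u}$ with $\rho_{\bm u}=\bm u(t_1)-\Pi^{\bm V_h}\bm u(t_1)$ and $\theta_{\bm u}=\Pi^{\bm V_h}\bm u(t_1)-\bm u_h^1$, and analogously $e_\xi=\rho_\xi+\theta_\xi$, $e_p=\rho_p+\theta_p$, $e_T=\rho_T+\theta_T$. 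Taking the discrete initial data as the projections of $(\bm u^0,\xi^0,p^0,T^0)$ makes $\theta_{\bm u}^0=\theta_\xi^0=\theta_p^0=\theta_T^0=0$, so no summation in time and hence no discrete Gr\"onwall inequality is needed; the whole estimate reduces to a single energy identity.

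Next I would derive the error equations for the discrete parts by subtracting \eqref{TP_Model_dis_a}--\eqref{TP_Model_dis_d} (with $n=1$) from the variational system \eqref{TP_Model_var} tested at $t=t_1$ and scaled by $\Delta t$ in the evolution equations. The defining relations \eqref{Pi V} and \eqref{Pi Q} annihilate the principal elasticity stiffness and divergence contributions of $(\rho_{\bm u},\rho_\xi)$, while \eqref{Pi Wp} and \eqref{Pi WT} make $(\bm K\nabla\rho_p,\nabla q_h)$ and $(\bm\Theta\nabla\rho_T,\nabla S_h)$ vanish against any $q_h,S_h\in W_h$. Consequently the projection errors survive only inside the lower-order storage and constraint couplings; bounding them with \eqref{Pi V Q properties}, \eqref{Pi Wp properties}, and \eqref{Pi WT properties} produces exactly the $h^{2k}$ and $h^{2l+2}$ contributions collected in $\tilde C_2$ and $\tilde C_3$, the temporal increments $\rho_\xi^1-\rho_\xi^0$, $\rho_p^1-\rho_p^0$, $\rho_T^1-\rho_T^0$ supplying the derivative norms $\|\xi_t\|$, $\|\bm u_t\|$, $\|p_t\|$, $\|T_t\|$ appearing there. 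The temporal consistency errors, obtained by replacing $\xi_t(t_1),p_t(t_1),T_t(t_1)$ with the backward differences, are written in the integral-remainder form of Lemma~\ref{le: Taylor} with $k=1$; on the single interval $[0,\Delta t]$ this bounds each by $\Delta t$ times an $L^2$ norm of the corresponding second time derivative, and squaring yields the $\Delta t^2$ rate together with the integrals $\int_0^\tau\|\bm u_{tt}\|^2$, $\int_0^\tau\|\xi_{tt}\|^2$, $\int_0^\tau\|p_{tt}\|^2$, $\int_0^\tau\|T_{tt}\|^2$ assembled into $\tilde C_1$.

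I would then close the estimate by testing the momentum error equation with $\theta_{\bm u}^1$, the constraint error equation with $\theta_\xi^1$, equation \eqref{TP_Model_dis_c} with $\theta_p^1$, and \eqref{TP_Model_dis_d} with $\theta_T^1$, and adding the four identities (using $\theta^0=0$). The divergence coupling $(\nabla\cdot\theta_{\bm u}^1,\theta_\xi^1)$ generated by the momentum equation cancels against the corresponding term from the constraint equation, and the remaining $\lambda^{-1}$ contributions---namely $\tfrac{1}{\lambda}\|\theta_\xi^1\|^2$ together with the $\tfrac{\alpha^2}{\lambda},\tfrac{\beta^2}{\lambda},\tfrac{\alpha\beta}{\lambda}$ storage terms and the $\tfrac{\alpha}{\lambda},\tfrac{\beta}{\lambda}$ cross terms---assemble into the single nonnegative perfect square $\tfrac{1}{\lambda}\|\theta_\xi^1-\alpha\theta_p^1-\beta\theta_T^1\|^2$. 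This square is retained to control the $\lambda^{-1}$-weighted projection residuals from the constraint and is otherwise discarded; it is exactly the mechanism by which the pseudo-total-pressure reformulation \eqref{TP_Model_four} renders the bound independent of $\lambda$. Lemma~\ref{le: B(u,v)} applied to the forms $(\bm K\nabla\cdot,\nabla\cdot)$ and $(\bm\Theta\nabla\cdot,\nabla\cdot)$ supplies the nonnegative $\Delta t$-weighted gradient terms, and what survives of the storage block is the $\lambda$-free quadratic form $c_0\|\theta_p^1\|^2-2b_0(\theta_p^1,\theta_T^1)+a_0\|\theta_T^1\|^2$, bounded below by $(c_0-b_0)\|\theta_p^1\|^2+(a_0-b_0)\|\theta_T^1\|^2$ under Assumption~(A3). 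The constants $\tfrac{c_0-b_0}{2}$, $\tfrac{a_0-b_0}{2}$ and the elastic factor $\mu$ then emerge after Young's inequality absorbs the right-hand side and a triangle inequality passes from the discrete errors $\theta$ to the full errors $e$, the latter step generating additional $h^{2k}$ and $h^{2l+2}$ terms.

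The main obstacle is recognizing and verifying this perfect-square cancellation: one must combine the four tested identities in exactly the right proportions so that the divergence coupling cancels and the six $\lambda^{-1}$ terms collapse to $\tfrac{1}{\lambda}\|\theta_\xi^1-\alpha\theta_p^1-\beta\theta_T^1\|^2$. Without this observation the estimate would carry unbounded $\lambda^{-1}$-weighted terms and degenerate in the near-incompressible limit $\lambda\to\infty$ that motivates the four-field formulation. A secondary point is confirming the coercivity of the residual $(p,T)$ storage block, for which Assumption~(A3), $a_0,c_0>b_0\ge0$, is precisely what is required. Once these two algebraic facts are in hand, the single-step estimate follows immediately, the vanishing of $\theta^0$ removing any need for a time-accumulation argument.
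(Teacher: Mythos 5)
The paper does not actually prove this lemma itself---it invokes Theorem 4.2 of \cite{cai2025efficient}---so there is no internal proof to compare against, but your reconstruction is the standard monolithic energy argument and is sound: the projection splitting, the cancellation of the divergence coupling after flipping the sign of the constraint equation, the perfect square $\tfrac{1}{\lambda}\|\theta_\xi^1-\alpha\theta_p^1-\beta\theta_T^1\|^2$, and the coercivity of the storage block under Assumption (A3) are exactly the manipulations this paper performs in Lemma \ref{le: D estimate} and Theorem \ref{th: error estimate}, specialized to a single step with vanishing discrete initial error. Two minor remarks: the lemma's left-hand side is already stated in terms of the discrete parts $e^{h,1}$ (your $\theta^1$), so the final triangle-inequality passage back to the full errors is superfluous; and your choice of the discrete initial data as projections of $(\bm u^0,\xi^0,p^0,T^0)$ is an assumption the paper leaves implicit but which is indeed needed for the bound as stated.
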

\subsection{A priori estimate for the parallel semi-decoupled algorithm}
The error analyses of \textbf{Algorithm~1}, \textbf{Algorithm~2}, and \textbf{Algorithm~3} follow a similar procedure.  
In this subsection, we focus on deriving the \emph{a priori} error estimates for \textbf{Algorithm~3}, the parallel semi-decoupled scheme.

To carry out the analysis, in the remainder of this article, we denote by 
$e_{\bm u}^n,\, e_{\xi}^n,\, e_{p}^n,\, e_{T}^n$ the differences between the exact solutions of \eqref{TP_Model_var} at time step $n$ and the corresponding numerical solutions obtained by \textbf{Algorithm~3}.  
We then decompose each error term into two parts, namely,
\begin{equation}\label{eq: e=eI+eh}
\begin{aligned} 
 &e_{\bm u}^n:=\bm u^n-\bm u_h^n=( \bm u^n-\Pi^{\bm V_h}\bm u^n)+(\Pi^{\bm V_h}\bm u^n -\bm u_h^n):=e_{\bm u}^{I,n}+e_{\bm u}^{h,n},\\
 &e_{\xi}^n:=\xi^n-\xi_h^n=(\xi^n-\Pi^{Q_h}\xi^n)+(\Pi^{Q_h}\xi^n -\xi_h^n):=e_{\xi}^{I,n}+e_{\xi}^{h,n},\\ 
 &e_{p}^n:=p^n-p_h^n=(p^n-\Pi^{W_{p,h}} p^n)+(\Pi^{W_{p,h}} p^n -p_h^n):=e_{p}^{I,n}+e_{p}^{h,n},\\ 
 &e_{T}^n:=T^n-T_h^n=(T^n-\Pi^{W_{p,h}} T^n)+(\Pi^{W_{p,h}} T^n -T_h^n):=e_{T}^{I,n}+e_{T}^{h,n}.\\
\end{aligned}
\end{equation} 
We also denote
\begin{equation}\label{diff_real_num D}
\begin{aligned} 
     (d_{\bm u}^{n+1},~d_{\xi}^{n+1},~d_{p}^{n+1},~d_{T}^{n+1}):=
     (e_{\bm u}^{h,n+1},~e_{\xi}^{h,n+1},~e_{p}^{h,n+1},~e_{T}^{h,n+1})
    -(e_{\bm u}^{h,n},~e_{\xi}^{h,n},~e_{p}^{h,n},~e_{T}^{h,n}).
\end{aligned}
\end{equation}

We first give a lemma that will be used in the following error estimate.
\begin{lemma}\label{le: D estimate} 
       Let \((\bm u,~\xi,~p,~T)\) and \((\bm u_h^{n+1},~\xi_h^{n+1},~p_h^{n+1},~T_h^{n+1})\) for \(n\ge1\) be the solutions of Eqs. \eqref{TP_Model_var} and Eqs. \eqref{decoupled SRP a}-\eqref{decoupled SRP d}, respectively. There holds
    \begin{equation}\label{ineq: estimate sum D}
    \begin{aligned} 
    & \sum_{n=1}^N\Big[
       2\mu\Vert\bm\varepsilon(d_{\bm u}^{n+1})\Vert_{L^2(\Omega)}^2  
      +(c_0-b_0) \Vert d_{p}^{n+1}\Vert_{L^2(\Omega)}^2  
       +(a_0-b_0 )\Vert d_{T}^{n+1}\Vert_{L^2(\Omega)}^2 \\
    &   +b_0\|  d_{p}^{n+1}
           - d_{T}^{n+1} \|_{L^2(\Omega)}^2\big]
     +  \Delta t(\bm K\nabla e_p^{h,N+1},\nabla  e_p^{h,N+1})\\
    & + \Delta t (\bm\Theta\nabla e_T^{h,N+1},\nabla  e_T^{h,N+1}) \\
    \le &  \Delta t^3 \tilde C_1+ h^{2k}\Delta t \tilde C_2 
            +  h^{2l+2}\Delta t \tilde C_3,  
    \end{aligned}
    \end{equation} 
     where
     \begin{equation*}  
    \begin{aligned} 
    &  \tilde C_1   =\tilde C\left(  \int_{0}^{\tau}\| \bm u_{tt}  \|_{H^1(\Omega)}^2 
                  +\int_{0}^{\tau}\| \xi_{tt}  \|_{L^2(\Omega)}^2 
                  +\int_{0}^{\tau}\| p_{tt}  \|_{L^2(\Omega)}^2 
                +\int_{0}^{\tau}\| T_{tt}  \|_{L^2(\Omega)}^2\right),\\ 
    &  \tilde C_2   =\tilde C \left( \int_{0}^{\tau}\| \xi_t  \|_{H^k(\Omega)}^2  
               +\int_{0}^{\tau}\| \bm u_t  \|_{H^k(\Omega)}^2\right),  \\  
    &  \tilde C_3   =\tilde C \left(
               \int_{0}^{\tau}\| p_t  \|_{H^{l+1}(\Omega)}^2 
               +  \int_{0}^{\tau}\| T_t  \|_{H^{l+1}(\Omega)}^2\right),  
    \end{aligned}
    \end{equation*} 
where  \(\tilde C=\tilde C(\alpha,\beta,\lambda,a_0,b_0,c_0)\).    
\end{lemma}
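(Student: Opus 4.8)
The plan is to derive a discrete energy identity for the finite-element error components and then exploit the algebraic structure of the parameter block to extract the coercive quantities on the left-hand side. First I would form the error equations: evaluate the weak formulation \eqref{TP_Model_var} at $t_{n+1}$, subtract the parallel scheme \eqref{decoupled SRP a}--\eqref{decoupled SRP d}, and use the projection relations \eqref{Pi V}--\eqref{Pi WT} to eliminate the interpolation parts $e^{I}$ from the principal (symmetric and diffusion) terms, so that only $e^{h}$-components and explicit consistency remainders survive. Decomposing each time derivative into a truncation remainder plus a difference of errors, I would isolate two families of remainders: the backward-Euler remainders $R_p^{n+1}=p_t^{n+1}-\tfrac{p^{n+1}-p^n}{\Delta t}$ (and the $T$ analogue), and the two-step extrapolation remainders produced by replacing $\xi_t^{n+1}$ with $\tfrac{\xi^n-\xi^{n-1}}{\Delta t}$ and $p^{n+1},T^{n+1}$ with $p^n,T^n$ in the lagged couplings. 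Each is controlled by Taylor's formula (Lemma \ref{le: Taylor}) and the approximation estimates \eqref{Pi V Q properties}--\eqref{Pi WT properties}, together with Young's inequality; this is exactly what generates the three source groups $\Delta t^3\tilde C_1$, $h^{2k}\Delta t\,\tilde C_2$, and $h^{2l+2}\Delta t\,\tilde C_3$.

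Next I would difference the two mechanical equations between consecutive time levels and test them with $d_{\bm u}^{n+1}$ and $d_\xi^{n+1}$: testing the differenced momentum balance with $d_{\bm u}^{n+1}$ gives $2\mu\|\bm\varepsilon(d_{\bm u}^{n+1})\|^2=(\nabla\cdot d_{\bm u}^{n+1},d_\xi^{n+1})$, and testing the differenced mass balance with $d_\xi^{n+1}$ produces the term $\tfrac1\lambda\|d_\xi^{n+1}\|^2$. The reaction--diffusion equations, already in incremental form, I would multiply by $\Delta t$ and test with $d_p^{n+1}$ and $d_T^{n+1}$; Lemma \ref{le: B(u,v)} applied to the $\bm K$- and $\bm\Theta$-weighted diffusion forms telescopes the gradient terms into the end-time energies $\Delta t(\bm K\nabla e_p^{h,N+1},\nabla e_p^{h,N+1})$ and $\Delta t(\bm\Theta\nabla e_T^{h,N+1},\nabla e_T^{h,N+1})$, the $n=1$ boundary contribution being absorbed by the initial-step bound of Lemma \ref{le: couple N=1}. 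Adding the three tested equations, the storage terms $(c_0+\tfrac{\alpha^2}{\lambda})\|d_p^{n+1}\|^2$, $(a_0+\tfrac{\beta^2}{\lambda})\|d_T^{n+1}\|^2$, the cross term $2(\tfrac{\alpha\beta}{\lambda}-b_0)(d_p^{n+1},d_T^{n+1})$, the term $\tfrac1\lambda\|d_\xi^{n+1}\|^2$ and the $\alpha,\beta$ couplings all combine; the key algebraic observation is that the assembled quadratic equals
\[
\tfrac1\lambda\|d_\xi-\alpha d_p-\beta d_T\|^2+(c_0-b_0)\|d_p\|^2+(a_0-b_0)\|d_T\|^2+b_0\|d_p-d_T\|^2,
\]
so that, after dropping the nonnegative $\xi$-square and using (A3), the coercive flow triple remains, giving the parameter-robust structure with no stabilization.

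The main obstacle I anticipate is that the parallel decoupling destroys the time-level matching needed by this square completion: the couplings the scheme actually delivers are $\tfrac{\alpha}{\lambda}(d_p^{n},d_\xi^{n+1})$ and $\tfrac{\alpha}{\lambda}(d_\xi^{n},d_p^{n+1})$ (and the $\beta,T$ analogues), sitting at mismatched levels $n$ and $n+1$, whereas the clean form requires $-\tfrac{2\alpha}{\lambda}(d_p^{n+1},d_\xi^{n+1})$. I would reconcile them through the elementary identity $(a_n,b_{n+1})+(a_{n+1},b_n)=(a_{n+1},b_{n+1})+(a_n,b_n)-(a_{n+1}-a_n,b_{n+1}-b_n)$ combined with a discrete summation by parts over $n$; this recovers the matched-level coupling up to boundary terms (handled by Lemma \ref{le: couple N=1}) and second-difference terms $(d^{n+1}-d^{n})$ that must be absorbed without spoiling either the telescoped diffusion energy or the $\lambda$-robustness. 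Making this absorption rigorous—controlling the lag-induced corrections using the retained slack $\tfrac1\lambda\|d_\xi^{n+1}-\alpha d_p^{n+1}-\beta d_T^{n+1}\|^2$, the inf-sup bound \eqref{infsup for dis} linking $\|d_\xi^{n+1}\|$ to $\|\bm\varepsilon(d_{\bm u}^{n+1})\|$, and Young's inequality against the coercive quantities—is the delicate heart of the argument. Once these corrections are shown to be of the same order as the consistency remainders, summing over $n=1,\dots,N$ and collecting terms yields the stated estimate.
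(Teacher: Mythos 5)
Your overall skeleton coincides with the paper's: subtract the scheme from the continuous weak form, insert the projections, difference the two elasticity equations in time, test the four resulting equations with $d_{\bm u}^{n+1}$, $d_\xi^{n+1}$, $d_p^{n+1}$, $d_T^{n+1}$, telescope the diffusion terms via Lemma~\ref{le: B(u,v)} (with the $n=1$ contribution controlled by Lemma~\ref{le: couple N=1}), and bound the Euler and extrapolation remainders by Lemma~\ref{le: Taylor} and \eqref{Pi V Q properties}--\eqref{Pi WT properties}. The divergence — and the gap — is precisely at the step you yourself flag as the ``delicate heart.'' The paper does \emph{not} assemble the matched-level completed square $\tfrac1\lambda\|d_\xi-\alpha d_p-\beta d_T\|^2$ in this lemma. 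It keeps the two quantities $\tfrac1\lambda\|d_\xi^{n+1}\|^2$ and $\tfrac1\lambda\|\alpha d_p^{n+1}+\beta d_T^{n+1}\|^2$ as \emph{separate} squares on the left, moves the mismatched cross terms to the right as
\begin{equation*}
E_{11}=\sum_{n=1}^{N}\tfrac{1}{\lambda}\bigl(\alpha d_{p}^{n}+\beta d_{T}^{n},\,d_{\xi}^{n+1}\bigr)
+\sum_{n=1}^{N}\tfrac{1}{\lambda}\bigl(\alpha d_{p}^{n+1}+\beta d_{T}^{n+1},\,d_{\xi}^{n}\bigr),
\end{equation*}
and bounds $E_{11}$ by Young's inequality so that it is consumed exactly by those two separate squares. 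What survives on the left is only $2\mu\|\bm\varepsilon(d_{\bm u}^{n+1})\|^2$, the $(c_0-b_0)$, $(a_0-b_0)$ and $b_0$ terms, and the telescoped gradients; the residual $\eta_1\|d_\xi^{n+1}\|^2$ contributions from $E_1$--$E_4$ are then absorbed into the $\mu$-term through the inf-sup bound \eqref{ineq: applying infsup d}.

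Your alternative — restoring matched levels via $(a_n,b_{n+1})+(a_{n+1},b_n)=(a_{n+1},b_{n+1})+(a_n,b_n)-(a_{n+1}-a_n,b_{n+1}-b_n)$ and summation by parts — is not just harder, it runs into a concrete obstruction. Once you spend $\tfrac1\lambda\|d_\xi^{n+1}\|^2$ and $\tfrac1\lambda\|\alpha d_p^{n+1}+\beta d_T^{n+1}\|^2$ to form the completed square, the only $\lambda$-scaled slack you retain is $\tfrac1\lambda\|d_\xi^{n+1}-\alpha d_p^{n+1}-\beta d_T^{n+1}\|^2$, and this does \emph{not} control $\|d_\xi^{n+1}\|$ or $\|\alpha d_p^{n+1}+\beta d_T^{n+1}\|$ individually. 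The correction terms your identity generates — the second differences $\tfrac1\lambda(\alpha(d_p^{n+1}-d_p^n)+\beta(d_T^{n+1}-d_T^n),\,d_\xi^{n+1}-d_\xi^n)$ and the endpoint term $\tfrac1\lambda(\alpha d_p^{N+1}+\beta d_T^{N+1},\,d_\xi^{N+1})$ — therefore have nothing left to absorb them except $(c_0-b_0)\|d_p^{n+1}\|^2$, $(a_0-b_0)\|d_T^{n+1}\|^2$ and the inf-sup-transferred $\mu$-coercivity, and any such absorption introduces constants like $\alpha^2/(\lambda(c_0-b_0))$ or $1/(\lambda\gamma)$, destroying the parameter robustness the lemma asserts (and failing outright in the degenerate regimes the paper tests numerically). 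So the proposal as written does not close; the fix is simply to forgo the square completion here and follow the paper's $E_{11}$ treatment, reserving the matched-level completion for the proof of Theorem~\ref{th: error estimate}, where the test functions $e_\xi^{h,n+1}$, $e_p^{h,n+1}$, $e_T^{h,n+1}$ sit at the same time level and it applies cleanly.
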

\begin{proof}
By subtracting \eqref{TP_Model_var} from \eqref{decoupled SRP a}–\eqref{decoupled SRP d} at the \((n+1)\)-th time step, we obtain 
\begin{equation}\label{ }
\begin{aligned} 
2\mu(\bm\varepsilon(e_{\bm u}^{n+1}), \bm\varepsilon(\bm v_h)) 
                        - (\nabla\cdot\bm v_h,e_{\xi}^{n+1})&=0,  \\
  -(\nabla\cdot e_{\bm u}^{n+1} ),\phi_h)
  -\tfrac{1}{\lambda}(e_{\xi}^{n+1} ,\phi_h)
  +\tfrac{\alpha}{\lambda}(p^{n+1} -p_h^n ,\phi_h)
  +\tfrac{\beta}{\lambda}(T^{n+1} -T_h^n ,\phi_h)&=0 ,\\
 -\tfrac\alpha\lambda( \xi_t^{n+1} - \tfrac{\xi^{n}_h-\xi^{n-1}_h}{\Delta t} ,q_h)
 +c_\alpha( p_t^{n+1}-\tfrac{p^{n+1}_h-p^{n}_h}{\Delta t} ,q_h)
 +\bm (\bm K\nabla e_p^{n+1},\nabla q_h)&\\
 + c_{\alpha\beta}( T_t^{n+1}-\tfrac{T^{n+1}_h-T^{n}_h}{\Delta t} ,q_h)   &=0,\\
 -\tfrac\beta\lambda( \xi^{n+1}_t- \tfrac{\xi^{n}_h-\xi^{n-1}_h}{\Delta t},S_h)
 +c_\beta( T_t^{n+1}-\tfrac{T^{n+1}_h-T^{n}_h}{\Delta t} ,S_h) 
  +(\bm\Theta\nabla e_T^{n+1},\nabla S_h)&\\
  +c_{\alpha\beta}( p_t^{n+1}-\tfrac{p^{n+1}_h-p^{n}_h}{\Delta t} ,S_h)&=0.\\
\end{aligned}
\end{equation}  
Using the projection operators introduced in \eqref{Pi V}, \eqref{Pi Q}, \eqref{Pi Wp}, and \eqref{Pi WT}, we rearrange the terms and reformulate the above equations as follows: 
\begin{subequations}
\begin{equation}\label{eq: evun+1-evxin+1=0}
\begin{aligned} 
2\mu(\bm\varepsilon(e_{\bm u}^{h,n+1}), \bm\varepsilon(\bm v_h)) 
                        - (\nabla\cdot\bm v_h,e_{\xi}^{h,n+1})&=0,  \\
\end{aligned}
\end{equation}  
\begin{equation}\label{eq: -eun+1-exin+1+pn+1+Tn+1=0}
\begin{aligned} 
    -(\nabla\cdot e_{\bm u}^{h,n+1} ),\phi_h)
  -\tfrac{1}{\lambda}(e_{\xi}^{n+1} ,\phi_h)
  +\tfrac{\alpha}{\lambda}(p^{n+1} -p_h^n ,\phi_h)
  +\tfrac{\beta}{\lambda}(T^{n+1} -T_h^n ,\phi_h)&=0 ,\\
\end{aligned}
\end{equation} 
\begin{equation}\label{eq: reaction dis Dt a}
\begin{aligned} 
 &c_\alpha( d_{p}^{n+1},q_h)
 + \Delta t(\bm K\nabla e_p^{h,n+1},\nabla q_h)
 + c_{\alpha\beta}(d_{T}^{n+1},q_h) \\  
 &=\tfrac\alpha\lambda( \Delta t\xi_t^{n+1} - (\xi^{n}_h-\xi^{n-1}_h) ,q_h) 
  +c_\alpha( \Pi^{W_{p,h}} p^{n+1}-\Pi^{W_{p,h}} p^{n}-\Delta tp_t^{n+1},q_h) \\
 & + c_{\alpha\beta}(\Pi^{W_{T,h}} T^{n+1}-\Pi^{W_{T,h}} T^{n}-\Delta tT_t^{n+1},q_h),\\
 \end{aligned}
\end{equation}  
\begin{equation}\label{eq: reaction dis Dt b}
\begin{aligned} 
& c_\beta(d_{T}^{n+1},S_h)   
  +\Delta t(\bm\Theta\nabla e_T^{h,n+1},S_h)
  +c_{\alpha\beta}( d_{p}^{n+1},S_h)\\
 &= \tfrac\beta\lambda( \Delta t\xi_t^{n+1}- (\xi^{n}_h-\xi^{n-1}_h),S_h) 
   +c_{\alpha\beta}(\Pi^{W_{p,h}} p^{n+1}-\Pi^{W_{p,h}} p^{n}-\Delta tp_t^{n+1} ,S_h) \\
&   +c_\beta(\Pi^{W_{T,h}} T^{n+1}-\Pi^{W_{T,h}} T^{n}-\Delta tT_t^{n+1},S_h) .\\
\end{aligned}
\end{equation}  
\end{subequations}
By subtracting the $n$-th time-step of equation \eqref{eq: evun+1-evxin+1=0} from the $(n+1)$-th time-step of the equation, we derive
\begin{equation}\label{eq: Dvun+1-Dvxin+1=0}
\begin{aligned} 
2\mu(\bm\varepsilon(d_{\bm u}^{n+1}), \bm\varepsilon(\bm v_h)) 
                        - (\nabla\cdot\bm v_h,d_{\xi}^{n+1})&=0. \\
\end{aligned}
\end{equation}  
Performing the same operations on equation \eqref{eq: -eun+1-exin+1+pn+1+Tn+1=0}, we obtain
\begin{equation}\label{eq: -eun+1-exin+1+epn+1+eTn+1}
\begin{aligned} 
  -(\nabla\cdot d_{\bm u}^{n+1},\phi_h)
  -\tfrac{1}{\lambda}(e_{\xi}^{n+1}-e_{\xi}^{n},\phi_h)
  +\tfrac{\alpha}{\lambda}(p^{n+1}-p^n-p_h^n+p_h^{n-1},\phi_h)&\\
  +\tfrac{\beta}{\lambda}(T^{n+1}-T^n-T_h^n+T_h^{n-1},\phi_h)&=0 .\\
\end{aligned}
\end{equation} 
By using the definitions of \(d_{\xi}^{n+1} \), \(d_{p}^{n+1}  \) and \(d_{T}^{n+1}  \), we reformulate \eqref{eq: -eun+1-exin+1+epn+1+eTn+1} as
\begin{equation}\label{eq: Dun+1+Dxin+1-Dpn+1-DTn+1 = etc}
\begin{aligned} 
  &(\nabla\cdot d_{\bm u}^{n+1},\phi_h)
  +\tfrac{1}{\lambda}(d_{\xi}^{n+1},\phi_h)
  -\tfrac{\alpha}{\lambda}(d_{p}^{n},\phi_h)
  -\tfrac{\beta}{\lambda}(d_{T}^{n},\phi_h)\\
   =&-\tfrac{1}{\lambda}(e_{\xi}^{I,n+1}-e_{\xi}^{I,n},\phi_h)
   +\tfrac{\alpha}{\lambda}(p^{n+1}-p^n,\phi_h) 
   -\tfrac{\alpha}{\lambda}(\Pi^{W_{p,h}} p^n-\Pi^{W_{p,h}} p^{n-1},\phi_h)\\
   &+\tfrac{\beta}{\lambda}(T^{n+1}-T^n,\phi_h)
   -\tfrac{\beta}{\lambda}(\Pi^{W_{T,h}} T^n-\Pi^{W_{T,h}} T^{n-1},\phi_h).
\end{aligned}
\end{equation}  
From the second equation of \eqref{TP_Model_var}, we have
\begin{equation}\label{eq: -eun+1-exin+1+epn+1+eTn+1=0}
\begin{aligned}  
  -(\nabla\cdot(\bm u^{n+1}-\bm u^{n})),\phi_h)
  -\tfrac{1}{\lambda}(\xi^{n+1}-\xi^{n},\phi_h)
  +\tfrac{\alpha}{\lambda}(p^{n+1}-p^n,\phi_h)
  +\tfrac{\beta}{\lambda}(T^{n+1}-T^n,\phi_h) =0 ,\\ 
\end{aligned}
\end{equation}  
and
\begin{equation}\label{eq: Dt -eun+1-exin+1+epn+1+eTn+1}
\begin{aligned}  
  -(\nabla\cdot(\Delta t\bm u_t^{n+1})),\phi_h)
  -\tfrac{1}{\lambda}(\Delta t\xi_t^{n+1},\phi_h)
  +\tfrac{\alpha}{\lambda}(\Delta t p_t^{n+1},\phi_h)
  +\tfrac{\beta}{\lambda}(\Delta t T_t^{n+1},\phi_h) =0 .\\ 
\end{aligned}
\end{equation}  
Combining  \eqref{eq: Dun+1+Dxin+1-Dpn+1-DTn+1 = etc}, \eqref{eq: -eun+1-exin+1+epn+1+eTn+1=0}, and \eqref{eq: Dt -eun+1-exin+1+epn+1+eTn+1}, we have
\begin{equation}\label{eq: Dun+1+Dxin+1-Dpn+1-DTn+1 = Pi etc}
\begin{aligned} 
  &(\nabla\cdot d_{\bm u}^{n+1},\phi_h)
  +\tfrac{1}{\lambda}(d_{\xi}^{n+1},\phi_h)
  -\tfrac{\alpha}{\lambda}(d_{p}^{n},\phi_h)
  -\tfrac{\beta}{\lambda}(d_{T}^{n},\phi_h)\\
 =& (\nabla\cdot(\bm u^{n+1}-\bm u^{n}-\Delta t\bm u_t^{n+1})),\phi_h)
   +\tfrac{1}{\lambda}(\xi^{n+1}-\xi^{n}-\Delta t\xi_t^{n+1},\phi_h)
   -\tfrac{1}{\lambda}(e_{\xi}^{I,n+1}-e_{\xi}^{I,n},\phi_h) \\
  &-\tfrac{\alpha}{\lambda}(\Pi^{W_{p,h}} p^n-\Pi^{W_{p,h}} p^{n-1}-\Delta t p_t^{n+1},\phi_h)
   -\tfrac{\beta}{\lambda}(\Pi^{W_{T,h}} T^n-\Pi^{W_{T,h}} T^{n-1}-\Delta t T_t^{n+1},\phi_h).
\end{aligned}
\end{equation}  
By choosing \(\bm v_h=d_{\bm u}^{n+1}\), \(\phi_h=d_{\xi}^{n+1}\), \(q_h=d_{p}^{n+1}\), \(S_h=d_{T}^{n+1}\) in \eqref{eq: Dvun+1-Dvxin+1=0}, \eqref{eq: Dun+1+Dxin+1-Dpn+1-DTn+1 = Pi etc}, \eqref{eq: reaction dis Dt a}, and \eqref{eq: reaction dis Dt b} respectively, we rearrange the terms to obtain the following:
\begin{equation}\label{eq: eDun+1-eDxin+1=0 1}
2\mu(\bm\varepsilon(d_{\bm u}^{n+1}), \bm\varepsilon(d_{\bm u}^{n+1})) 
                        - (\nabla\cdot d_{\bm u}^{n+1},d_{\xi}^{n+1})=0, 
\end{equation}  
\begin{equation}\label{eq: DDun+1+DDxin+1-DDpn+1-DDTn+1 = Pi etc}
\begin{aligned} 
  &(\nabla\cdot d_{\bm u}^{n+1},d_{\xi}^{n+1})
  +\tfrac{1}{\lambda}(d_{\xi}^{n+1},d_{\xi}^{n+1})
  -\tfrac{\alpha}{\lambda}(d_{p}^{n},d_{\xi}^{n+1})
  -\tfrac{\beta}{\lambda}(d_{T}^{n},d_{\xi}^{n+1})\\
  =&  (\nabla\cdot(\bm u^{n+1}-\bm u^{n}-\Delta t\bm u_t^{n+1})),d_{\xi}^{n+1})
   +\tfrac{1}{\lambda}(\xi^{n+1}-\xi^{n}-\Delta t\xi_t^{n+1},d_{\xi}^{n+1})\\
   & -\tfrac{1}{\lambda}(e_{\xi}^{I,n+1}
   -e_{\xi}^{I,n},d_{\xi}^{n+1}) 
    -\tfrac{\alpha}{\lambda}(\Pi^{W_{p,h}} p^n-\Pi^{W_{p,h}} p^{n-1}-\Delta t p_t^{n+1},d_{\xi}^{n+1})\\
   &-\tfrac{\beta}{\lambda}(\Pi^{W_{T,h}} T^n-\Pi^{W_{T,h}} T^{n-1}-\Delta t T_t^{n+1},d_{\xi}^{n+1}) ,
\end{aligned}
\end{equation}  
\begin{equation}\label{eq: reaction dis Dt DD a}
\begin{aligned}  
 &c_\alpha( d_{p}^{n+1},d_{p}^{n+1})
 + \Delta t(\bm K\nabla e_p^{h,n+1},\nabla d_{p}^{n+1})
 + c_{\alpha\beta}(d_{T}^{n+1},d_{p}^{n+1}) \\  
 =&\tfrac\alpha\lambda( \Delta t\xi_t^{n+1} - (\xi^{n}_h-\xi^{n-1}_h) ,d_{p}^{n+1}) 
  +c_\alpha( \Pi^{W_{p,h}} p^{n+1}-\Pi^{W_{p,h}} p^{n}-\Delta tp_t^{n+1},d_{p}^{n+1}) \\
 & + c_{\alpha\beta}(\Pi^{W_{T,h}} T^{n+1}-\Pi^{W_{T,h}} T^{n}-\Delta tT_t^{n+1},d_{p}^{n+1}) ,
 \\ 
 \end{aligned}
\end{equation}
and
\begin{equation}\label{eq: reaction dis Dt DD b}
\begin{aligned}  
 & c_\beta(d_{T}^{n+1},d_{T}^{n+1})   
  +\Delta t(\bm\Theta\nabla e_T^{h,n+1},\nabla d_{T}^{n+1})
  +c_{\alpha\beta}( d_{p}^{n+1},d_{T}^{n+1})\\
 =& \tfrac\beta\lambda( \Delta t\xi_t^{n+1}- (\xi^{n}_h-\xi^{n-1}_h),d_{T}^{n+1}) 
   +c_{\alpha\beta}(\Pi^{W_{p,h}} p^{n+1}-\Pi^{W_{p,h}} p^{n}-\Delta tp_t^{n+1} ,d_{T}^{n+1}) \\
&   +c_\beta(\Pi^{W_{T,h}} T^{n+1}-\Pi^{W_{T,h}} T^{n}-\Delta tT_t^{n+1},d_{T}^{n+1}) .\\
\end{aligned}
\end{equation}  
Next, by summing \eqref{eq: eDun+1-eDxin+1=0 1}, \eqref{eq: DDun+1+DDxin+1-DDpn+1-DDTn+1 = Pi etc}, 
\eqref{eq: reaction dis Dt DD a}, and \eqref{eq: reaction dis Dt DD b} over the time index \(n=1,\dots,N\), 
and then applying the definitions of the projection operators together with an appropriate rearrangement of terms, we obtain
\begin{equation}\label{eq: sum = Eis}  
\begin{aligned}  
&\sum_{n=1}^{N}\Big[ 2\mu\|\bm\varepsilon(d_{\bm u}^{n+1})\|^2_{L^2(\Omega)}
+\tfrac{1}{\lambda}\|d_{\xi}^{n+1} \|^2_{L^2(\Omega)} \\ 
&+ \tfrac{1}{\lambda}\|\alpha d_{p}^{n+1}+\beta  d_{T}^{n+1}\|^2_{L^2(\Omega)} 
+ (c_0-b_0) \| d_{p}^{n+1}\|^2_{L^2(\Omega)} 
 + (a_0-b_0) \| d_{T}^{n+1}\|^2_{L^2(\Omega)} \\
&  +b_0  \|d_{p}^{n+1}-d_{T}^{n+1}\|^2_{L^2(\Omega)} 
  +\Delta t(\bm K\nabla e_p^{h,n+1},\nabla d_{p}^{n+1})
  +\Delta t(\bm\Theta\nabla e_T^{h,n+1},\nabla d_{T}^{n+1})
   \Big] \\
= 
 &\sum_{n=1}^{N}\Big[ 2\mu(\bm\varepsilon(d_{\bm u}^{n+1}), \bm\varepsilon(d_{\bm u}^{n+1}))
 +\tfrac{1}{\lambda}(d_{\xi}^{n+1},d_{\xi}^{n+1}) \\
&  +c_\alpha( d_{p}^{n+1} ,d_{p}^{n+1})
  +(a_0+\tfrac{ \beta^2}{\lambda})( d_{T}^{n+1} ,d_{T}^{n+1})   
  + \Delta t(\bm K\nabla e_p^{h,n+1},\nabla d_{p}^{n+1})\\
& +c_{\alpha\beta}( d_{p}^{n+1},d_{T}^{n+1})
  + c_{\alpha\beta}(d_{T}^{n+1},d_{p}^{n+1}) 
   +\Delta t(\bm\Theta\nabla e_T^{h,n+1},\nabla d_{T}^{n+1})\Big] \\
   \\
= & \sum_{i=1}^{11} E_i  , 
\end{aligned}
\end{equation}  
where 
\begin{equation*} 
\begin{aligned}  
&E_1=\sum_{n=1}^{N}(\nabla\cdot(\bm u^{n+1}-\bm u^{n}-\Delta t\bm 
                                        u_t^{n+1})),d_{\xi}^{n+1}),\\
\end{aligned}
\end{equation*}  
\begin{equation*} 
\begin{aligned}                                          
&E_2=\sum_{n=1}^{N} \tfrac{1}{\lambda}(\Pi^{Q_h}\xi^{n+1}-\Pi^{Q_h}\xi^{n}-\Delta t\xi_t^{n+1},d_{\xi}^{n+1}),\\
\end{aligned}
\end{equation*} 
\begin{equation*} 
\begin{aligned}  
&E_3=\sum_{n=1}^{N}-\tfrac{\alpha}{\lambda}(\Pi^{W_{p,h}} p^n-\Pi^{W_{p,h}} p^{n-1}-\Delta t p_t^{n+1},d_{\xi}^{n+1}),\\
\end{aligned}
\end{equation*} 
\begin{equation*} 
\begin{aligned}  
&E_4=\sum_{n=1}^{N}-\tfrac{\beta}{\lambda}(\Pi^{W_{T,h}} T^n-\Pi^{W_{T,h}} T^{n-1}-\Delta t  T_t^{n+1},d_{\xi}^{n+1}), \\
\end{aligned}
\end{equation*} 
\begin{equation*} 
\begin{aligned}  
&E_5=\sum_{n=1}^{N}\tfrac\alpha\lambda
( \Delta t\xi_t^{n+1}-\Pi^{Q_h} \xi^{n}+\Pi^{Q_h}\xi^{n-1},d_{p}^{n+1}),
\\
&E_6=\sum_{n=1}^{N}c_\alpha( \Pi^{W_{p,h}} p^{n+1}-\Pi^{W_{p,h}} p^{n}-\Delta tp_t^{n+1},d_{p}^{n+1}),\\
&E_7=\sum_{n=1}^{N}c_{\alpha\beta}(\Pi^{W_{T,h}} T^{n+1}-\Pi^{W_{T,h}} T^{n} -\Delta tT_t^{n+1},d_{p}^{n+1}) ,\\
&E_8=\sum_{n=1}^{N}\tfrac\beta\lambda
( \Delta t\xi_t^{n+1}-\Pi^{Q_h}\xi^{n}+\Pi^{Q_h}\xi^{n-1},d_{T}^{n+1}),\\
&E_9=\sum_{n=1}^{N}c_{\alpha\beta}(\Pi^{W_{p,h}} p^{n+1}-\Pi^{W_{p,h}} 
                   p^{n}-\Delta tp_t^{n+1} ,d_{T}^{n+1}),\\
&E_{10}=\sum_{n=1}^{N}c_\beta(\Pi^{W_{T,h}} T^{n+1}-\Pi^{W_{T,h}} T^{n}- 
                   \Delta tT_t^{n+1},d_{T}^{n+1}), \\
\end{aligned}
\end{equation*} 
and
\begin{equation*} 
\begin{aligned}  
&E_{11}=\sum_{n=1}^{N} 
    \tfrac{1}{\lambda}(\alpha d_{p}^{n}+\beta d_{T}^{n},d_{\xi}^{n+1})
    +\sum_{n=1}^{N} 
    \tfrac{1}{\lambda}(\alpha d_{p}^{n+1}+\beta d_{T}^{n+1}
    ,d_{\xi}^{n}). 
\end{aligned}
\end{equation*} 

Next, by applying Lemma \ref{le: Taylor},  the Cauchy-Schwarz inequality,
 and Young’s inequality, we can bound $E_1$ as:
\begin{equation*} 
\begin{aligned}  
E_1&=\sum_{n=1}^{N}(\nabla\cdot(\bm u^{n+1}-\bm u^{n}-\Delta t\bm 
               u_t^{n+1})),d_{\xi}^{n+1})\\ 
&\le 
\sum_{n=1}^{N} \tfrac{C}{\eta_1} \| \bm u^{n+1} -\bm u^{n} - \Delta t 
     \bm u_t^{n+1} \|_{H^1(\Omega)}^2 
+ \ \sum_{n=1}^{N}\eta_1 \| d_{\xi}^{n+1} \|_{L^2(\Omega)}^2 \\
&=  \sum_{n=1}^{N}\tfrac{C}{\eta_1} \| \int_{t_{n}}^{t_{n+1}}\bm u_{tt}(s) (s-t_{n}) ds \|_{H^1(\Omega)}^2 
+ \sum_{n=1}^{N}\eta_1\| d_{\xi}^{n+1} \|_{L^2(\Omega)}^2 \\
&\le  \tfrac{C}{\eta_1}\Delta t^3 \int_{0}^{\tau} \left\| \bm u_{tt} \right\|_{H^1(\Omega)}^2 
+  \sum_{n=1}^{N} \eta_1\| d_{\xi}^{n+1} \|_{L^2(\Omega)}^2,
\end{aligned}
\end{equation*} 
where $\eta_1$ is a constant which will be determined later.
By applying Lemma \ref{le: Taylor}, the Cauchy-Schwarz inequality, Young’s inequality, and the properties of the operators in \eqref{Pi V Q properties}-\eqref{Pi WT properties}, we can bound $E_2$ as:
 \begin{equation*} 
\begin{aligned}  
E_2&=\sum_{n=1}^{N} \tfrac{1}{\lambda}(\Pi^{Q_h}\xi^{n+1}-\Pi^{Q_h}\xi^{n}-\Delta t\xi_t^{n+1},d_{\xi}^{n+1})\\ 
 =&\sum_{n=1}^N\tfrac1\lambda ( \Pi^{Q_h} \xi^{n+1}-\Pi^{Q_h} \xi^{n}-(\xi^{n+1}-\xi^{n})
         +(\xi^{n+1}-\xi^{n})-\Delta t \xi_t^{n+1} , d_{\xi}^{n+1})\\
  \le&
   \tfrac1\lambda \Big(
   h^{2k}
     \sum_{n=1}^N \tfrac{C}{\eta_1}\|\xi^{n+1}-\xi^{n}  \|_{H^{k+1}(\Omega)}^2
   +
 \sum_{n=1}^N \tfrac{C}{\eta_1} \| \xi^{n+1}-\xi^{n}-\Delta t\xi_t^{n+1} \|_{L^2(\Omega)}^2
  \\
    &  +\sum_{n=1}^N \eta_1 \| d_{\xi}^{n+1} \|_{L^2(\Omega)}^2
   \Big)    \\    
  \le & \tfrac{1}{\lambda}\Big(
  h^{2k}\Delta t \tfrac{C}{\eta_1} 
    \int_{0}^{\tau}\| \xi_t  \|_{H^k(\Omega)}^2
   + \Delta t^3\tfrac{C}{\eta_1}  \int_{0}^{\tau}\| \xi_{tt}  \|_{L^2(\Omega)}^2 + \sum_{n=1}^N \eta_1 \| d_{\xi}^{n+1} \|_{L^2(\Omega)}^2
   \Big),                      
\end{aligned}
\end{equation*} 
Similarly, \(E_3\), \(E_4\), \(E_5\), \(E_6\), \(E_7\), \(E_8\), \(E_9\) and \(E_{10}\) can be bounded as follows:
 \begin{equation*} 
\begin{aligned}  
E_3
=&\sum_{n=1}^{N}\Big[-\tfrac{\alpha}{\lambda}(\Pi^{W_{p,h}} p^n-\Pi^{W_{p,h}} p^{n-1}-\Delta t p_t^{n}
  +(\Delta t p_t^{n}-\Delta t p_t^{n+1}),d_{\xi}^{n+1})
  \Big]\\
\le &
\tfrac\alpha\lambda\Big(h^{2l+2}\Delta t \tfrac{C}{\eta_1} 
    \int_{0}^{\tau}\| p_t  \|_{H^{l+1}(\Omega)}^2
   + \Delta t^3 \tfrac{C}{\eta_1}  \int_{0}^{\tau}\| p_{tt}  \|_{L^2(\Omega)}^2
    +  \sum_{n=1}^N  \eta_1 \| d_{\xi}^{n+1} \|_{L^2(\Omega)}^2 \Big), 
\end{aligned}
\end{equation*} 
 \begin{equation*} 
\begin{aligned}  
E_4\le&
\tfrac\beta\lambda\Big(h^{2l+2}\Delta t\tfrac{C}{\eta_1}
    \int_{0}^{\tau}\| T_t  \|_{H^{l+1}(\Omega)}^2
   + \Delta t^3 \tfrac{C}{\eta_1} \int_{0}^{\tau}\| T_{tt}  \|_{L^2(\Omega)}^2
    + \sum_{n=1}^N  \eta_1 \| d_{\xi}^{n+1} \|_{L^2(\Omega)}^2 \Big), \\
\end{aligned}
\end{equation*} 
\begin{equation*} 
\begin{aligned}   
 E_5\le&
\tfrac\alpha\lambda\Big(
    h^{2k}\Delta t \tfrac{C}{\eta_1}\
    \int_{0}^{\tau} \| \xi_t  \|_{H^{k+1}(\Omega)}^2
    +\Delta t^3 \tfrac{C}{\eta_1}
    \int_{0}^{\tau}\| \xi_t  \|_{H^{k}(\Omega)}^2
 + \sum_{n=1}^N  \eta_1 \| d_{p}^{n+1} \|_{L^2(\Omega)}^2\\
&+\Delta t^3 \tfrac{C}{\eta_1}\int_{0}^\tau \|\xi_{tt}\|_{L^2(\Omega)}^2
\Big),\\
\end{aligned}
\end{equation*} 
 \begin{equation*} 
\begin{aligned}  
E_6
 \le&
  c_\alpha\left(h^{2l+2}\Delta t
    \tfrac{C}{\eta_1}\int_{0}^{\tau}\| p_t  \|_{H^{l+1}(\Omega)}^2
   + \Delta t^3  \tfrac{C}{\eta_1}\int_{0}^{\tau}\| p_{tt}  \|_{L^2(\Omega)}^2
    +  \sum_{n=1}^N  \eta_1\| d_{p}^{n+1} \|_{L^2(\Omega)}^2
    \right),
\end{aligned}
\end{equation*} 
 \begin{equation*} 
\begin{aligned}   
 E_7
  \le&
  \left|c_{\alpha\beta}|\right|\left(h^{2l+2}\Delta t \tfrac{C}{\eta_1}
    \int_{0}^{\tau}\| T_t  \|_{H^{l+1}(\Omega)}^2
   + \Delta t^3 \tfrac{C}{\eta_1} \int_{0}^{\tau}\| T_{tt}  \|_{L^2(\Omega)}^2
    + \sum_{n=1}^N  \eta_1\| d_{p}^{n+1} \|_{L^2(\Omega)}^2
    \right),\\
\end{aligned}
\end{equation*} 
\begin{equation*} 
\begin{aligned}   
E_8\le&
\tfrac\beta\lambda\Big( 
    h^{2k}\Delta t \tfrac{C}{\eta_1} 
    \int_{0}^{\tau} \| \xi_t  \|_{H^{k+1}(\Omega)}^2
    +\Delta t^3 \tfrac{C}{\eta_1}
    \int_{0}^{\tau}\| \xi_t  \|_{H^{k}(\Omega)}^2
 + \sum_{n=1}^N  \eta_1 \| d_{T}^{n+1} \|_{L^2(\Omega)}^2\\
&+\Delta t^3 \tfrac{C}{\eta_1}\int_{0}^\tau \|\xi_{tt}\|_{L^2(\Omega)}^2
\Big),\\
\end{aligned}
\end{equation*} 
 \begin{equation*} 
\begin{aligned}   
E_9  \le&\left|c_{\alpha\beta}|\right|\left(h^{2l+2}\Delta t \tfrac{C}{\eta_1} 
    \int_{0}^{\tau}\| p_t  \|_{H^{l+1}(\Omega)}^2
   + \Delta t^3  \tfrac{C}{\eta_1} \int_{0}^{\tau}\| p_{tt}  \|_{L^2(\Omega)}^2
    +  \sum_{n=1}^N \eta_1 \| d_{T}^{n+1} \|_{L^2(\Omega)}^2
    \right),\\
\end{aligned}
\end{equation*} 
and
 \begin{equation*} 
\begin{aligned}   
E_{10} \le &c_\beta\left(h^{2l+2}\Delta t \tfrac{C}{\eta_1} 
    \int_{0}^{\tau}\| T_t  \|_{H^{l+1}(\Omega)}^2
   + \Delta t^3 \tfrac{C}{\eta_1}   \int_{0}^{\tau}\| T_{tt}  \|_{L^2(\Omega)}^2
    + \sum_{n=1}^N \eta_1 \| d_{T}^{n+1} \|_{L^2(\Omega)}^2
    \right).\\
\end{aligned}
\end{equation*}
By Young’s inequality and Cauchy-Schwarz inequality, we obtain the bound of \(E_{11}\) as
\begin{equation*} 
\begin{aligned}  
 E_{11} 
 \le &  \sum_{n=1}^N\tfrac{1}{2\lambda}\| \alpha d_{p}^{n}
           +\beta d_{T}^{n} \|_{L^2(\Omega)}^2
       + \sum_{n=1}^N\tfrac{1}{2\lambda}\| d_{\xi}^{n+1} \|_{L^2(\Omega)}^2    \\
     &+ \sum_{n=1}^N\tfrac{1}{2\lambda}\| \alpha d_{p}^{n+1}
           +\beta d_{T}^{n+1} \|_{L^2(\Omega)}^2
       + \sum_{n=1}^N\tfrac{1}{2\lambda}\| d_{\xi}^{n} \|_{L^2(\Omega)}^2    \\
\le &
   \sum_{n=1}^N\tfrac{1}{\lambda}\| d_{\xi}^{n+1} \|_{L^2(\Omega)}^2 
  +\sum_{n=1}^N\tfrac{1}{\lambda}\| \alpha d_{p}^{n+1}
           +\beta d_{T}^{n+1} \|_{L^2(\Omega)}^2 .\\
\end{aligned}
\end{equation*} 
By the Lemma \ref{le: B(u,v)}, we have
\begin{equation} \label{eq: epDp get epn+1-epn}
\begin{aligned} 
 \Delta t(\bm K\nabla e_p^{h,n+1},\nabla d_{p}^{n+1})
  \ge  \tfrac{\Delta t}{2}(\bm K\nabla e_p^{h,n+1},\nabla e_p^{h,n+1})
    -  \tfrac{\Delta t}{2}(\bm K\nabla e_p^{h,n},\nabla e_p^{h,n})
\end{aligned}
\end{equation} 
and
\begin{equation}\label{eq: eTDT get eTn+1-eTn} 
\begin{aligned} 
 \Delta t(\bm\Theta\nabla e_T^{h,n+1},\nabla d_{T}^{n+1})
 \ge \tfrac{\Delta t}{2}(\bm\Theta\nabla e_T^{h,n+1},\nabla e_T^{h,n+1})
    -\tfrac{\Delta t}{2}(\bm\Theta\nabla e_T^{h,n},\nabla e_T^{h,n}) . 
\end{aligned}
\end{equation} 
By combining \eqref{eq: sum = Eis}, \eqref{eq: epDp get epn+1-epn}, and \eqref{eq: eTDT get eTn+1-eTn}, together with the established bounds for \(E_1\) through \(E_{11}\), we obtain the following.
\begin{equation}%\label{eq: sum = Eis}  
\begin{aligned}  
&\sum_{n=1}^{N}\Big[ 2\mu\|\bm\varepsilon(d_{\bm u}^{n+1})\|^2_{L^2(\Omega)}
+\tfrac{1}{\lambda}\|d_{\xi}^{n+1} \|^2_{L^2(\Omega)} \\ 
&+ \tfrac{1}{\lambda}\|\alpha d_{p}^{n+1}+\beta  d_{T}^{n+1}\|^2_{L^2(\Omega)} 
+ (c_0-b_0) \| d_{p}^{n+1}\|^2_{L^2(\Omega)} 
 + (a_0-b_0) \| d_{T}^{n+1}\|^2_{L^2(\Omega)} \\
&  +b_0  \|d_{p}^{n+1}-d_{T}^{n+1}\|^2_{L^2(\Omega)}  \Big] 
  +\tfrac{\Delta t}{2}(\bm K\nabla e_p^{h,N+1},\nabla e_p^{h,N+1})
  +\tfrac{\Delta t}{2}(\bm\Theta\nabla e_T^{h,N+1},\nabla e_T^{h,N+1})
  \\
\le \mathbb C\Bigg(&
        \sum_{n=1}^N\eta_1 \| d_{\xi}^{n+1} \|_{L^2(\Omega)}^2
      + \sum_{n=1}^N \eta_1\| d_{p}^{n+1} \|_{L^2(\Omega)}^2
      + \sum_{n=1}^N\eta_1\| d_{T}^{n+1} \|_{L^2(\Omega)}^2\\
   +&\Delta t^3 \tfrac{C}{\eta_1} \left(  \int_{0}^{\tau}\| \bm u_{tt}  \|_{H^1(\Omega)}^2 
                  +\int_{0}^{\tau}\| \xi_{tt}  \|_{L^2(\Omega)}^2 
                  +\int_{0}^{\tau}\| p_{tt}  \|_{L^2(\Omega)}^2 
                +\int_{0}^{\tau}\| T_{tt}  \|_{L^2(\Omega)}^2\right)\\ 
   +&h^{2k} \Delta t \tfrac{C}{\eta_1}  \left( \int_{0}^{\tau}\| \xi_t  \|_{H^k(\Omega)}^2  
               +\int_{0}^{\tau}\| \bm u_t  \|_{H^{k+1}(\Omega)}^2\right)  \\  
   + &h^{2l+2} \Delta t \tfrac{C}{\eta_1}  \left(
               \int_{0}^{\tau}\| p_t  \|_{H^{l+1}(\Omega)}^2 
               +  \int_{0}^{\tau}\| T_t  \|_{H^{l+1}(\Omega)}^2\right)
    \Bigg)           \\
    +&  \sum_{n=1}^N\tfrac{1}{\lambda}\| d_{\xi}^{n+1} \|_{L^2(\Omega)}^2 
 +\sum_{n=1}^N\tfrac{1}{\lambda}\| \alpha d_{p}^{n+1}
           +\beta d_{T}^{n+1} \|_{L^2(\Omega)}^2\\
   +&\tfrac{\Delta t}{2}(\bm K\nabla e_p^{h,1},\nabla e_p^{h,1})
    +\tfrac{\Delta t}{2}(\bm\Theta\nabla e_T^{h,1},\nabla e_T^{h,1})            ,
\end{aligned}
\end{equation}  
where \(\mathbb{C}\) denotes a constant depending on \(a_0\), \(b_0\), \(c_0\), \(\alpha\), \(\beta\), and \(\lambda\).  
Furthermore, since the finite element spaces \(\bm V_h\) and \(Q_h\) satisfy the inf–sup condition \eqref{infsup for dis}, we can combine this property with the Cauchy–Schwarz inequality and the definition of the strain operator \(\bm\varepsilon\)  to deduce from \eqref{eq: evun+1-evxin+1=0} the following relations:
\begin{equation} \label{ineq: applying infsup d}
\begin{aligned} 
 \|d_{\xi}^{n+1}\|_{L^2(\Omega)}^2\le 
 \sup_{v_h\in \bm V_h} \tfrac{(\nabla\cdot \bm v_h,d_{\xi}^{n+1})}{\|\bm v_h\|_{H^1(\Omega)}}
 =\sup_{v_h\in \bm V_h}
 \tfrac{2\mu(\bm\varepsilon(d_{\bm u}^{n+1}), \bm\varepsilon(\bm v_h))}{\|\bm v_h\|_{H^1(\Omega)}}
 \le \tfrac{2\mu}{\gamma}  \|\bm\varepsilon(d_{\bm u}^{n+1})\|_{L^2(\Omega)}^2.
\end{aligned}
\end{equation}  
From Lemma \ref{le: couple N=1}, together with equation \eqref{ineq: applying infsup d} and by choosing a sufficiently small parameter $\eta_1$, we obtain \eqref{ineq: estimate sum D}.
\end{proof}

We next present the following theorem on error estimates, which establishes the optimal convergence of \textbf{Algorithm 3}.

\begin{theorem}[estimate for \textbf{Algorithm 3}]\label{th: error estimate}
    Let \((\bm u,~\xi,~p,~T)\) and \((\bm u_h^{n+1},~\xi_h^{n+1},~p_h^{n+1},~T_h^{n+1})\) for \(n\ge1\) be the solutions of Eqs. \eqref{TP_Model_var} and Eqs. \eqref{decoupled SRP a}-\eqref{decoupled SRP d}, respectively. There holds
    \begin{equation}\label{}
    \begin{aligned} 
    & %\max_{0\le n\le l}\Big[
       \mu\Vert\bm\varepsilon(e_{\bm u}^{h,N+1})\Vert_{L^2(\Omega)}^2  
       +\Vert  e_\xi^{h,N+1}\Vert_{L^2(\Omega)}^2 
      +\tfrac{c_0-b_0}{2} \Vert  e_p^{h,N+1}\Vert_{L^2(\Omega)}^2  
       +\tfrac{a_0-b_0}{2} \Vert  e_T^{h,N+1}\Vert_{L^2(\Omega)}^2 
                      \\% \Big] \\
                       &+  \Delta t \sum_{n=1}^N \big(  
            (\bm K\nabla e_p^{h,n+1},\nabla e_p^{h,n+1})  
        + (\bm\Theta\nabla e_T^{h,n+1},\nabla e_T^{h,n+1}) \big)\\
    \le &  \tilde C_1\Delta t^2 + \tilde C_2 h^{2k} + \tilde C_3 h^{2l+2},  
    \end{aligned}
    \end{equation} 
     where
     \begin{equation*}  
    \begin{aligned} 
    &  \tilde C_1   =\tilde C\left(  \int_{0}^{\tau}\| \bm u_{tt}  \|_{H^1(\Omega)}^2 
                  +\int_{0}^{\tau}\| \xi_{tt}  \|_{L^2(\Omega)}^2 
                  +\int_{0}^{\tau}\| p_{tt}  \|_{L^2(\Omega)}^2 
                +\int_{0}^{\tau}\| T_{tt}  \|_{L^2(\Omega)}^2\right),\\ 
    &  \tilde C_2   =\tilde C \left( \int_{0}^{\tau}\| \xi_t  \|_{H^k(\Omega)}^2  
               +\int_{0}^{\tau}\| \bm u_t  \|_{H^{k+1}(\Omega)}^2\right), \\  
    &  \tilde C_3   =\tilde C \left(
               \int_{0}^{\tau}\| p_t  \|_{H^{l+1}(\Omega)}^2 
               +  \int_{0}^{\tau}\| T_t  \|_{H^{l+1}(\Omega)}^2\right),  
    \end{aligned}
    \end{equation*} 
where  \(\tilde C=\tilde C(\alpha,\beta,\lambda,a_0,b_0,c_0)\).    
\end{theorem}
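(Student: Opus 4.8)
The plan is to repeat the energy computation underlying Lemma~\ref{le: D estimate}, but to test the error equations with the \emph{full} discrete errors instead of their temporal increments, so that the final-time quantities of the theorem appear directly. I would start from the four error relations \eqref{eq: evun+1-evxin+1=0}, \eqref{eq: -eun+1-exin+1+pn+1+Tn+1=0}, \eqref{eq: reaction dis Dt a}, \eqref{eq: reaction dis Dt b} already derived in that proof. Writing $U^{n}=(e_p^{h,n},e_T^{h,n})$ and letting $B(U,V)=(MU,V)$ be the form of the symmetric positive-definite coupling matrix $M$ with entries $c_0+\alpha^2/\lambda$, $\alpha\beta/\lambda-b_0$, $a_0+\beta^2/\lambda$, I would take $q_h=e_p^{h,n+1}$ and $S_h=e_T^{h,n+1}$ in \eqref{eq: reaction dis Dt a}--\eqref{eq: reaction dis Dt b} and add them. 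All the time-difference terms, diagonal \emph{and} off-diagonal, then assemble into $B(U^{n+1}-U^{n},U^{n+1})$, to which Lemma~\ref{le: B(u,v)} applies: summing over $n=1,\dots,N$ telescopes this into $\tfrac12[B(U^{N+1},U^{N+1})-B(U^{1},U^{1})]+\tfrac12\sum_n B(d^{n+1},d^{n+1})$. The algebraic identity $B(U,U)=\tfrac1\lambda\|\alpha e_p+\beta e_T\|^2+(c_0-b_0)\|e_p\|^2+(a_0-b_0)\|e_T\|^2+b_0\|e_p-e_T\|^2$ shows the final-time term dominates the wanted $\tfrac{c_0-b_0}2\|e_p^{h,N+1}\|^2+\tfrac{a_0-b_0}2\|e_T^{h,N+1}\|^2$ from below, the increment term $B(d^{n+1},d^{n+1})$ is controlled by Lemma~\ref{le: D estimate} (its extra $\tfrac1\lambda\|\alpha d_p+\beta d_T\|^2$ being dominated by $\|d_p^{n+1}\|^2,\|d_T^{n+1}\|^2$, which stay coercive thanks to $a_0,c_0>b_0$ in Assumption (A3)), and the diffusion terms directly produce the coercive $\Delta t\sum_n(\bm K\nabla e_p^{h,n+1},\nabla e_p^{h,n+1})$ and its temperature analogue.

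The right-hand sides of \eqref{eq: reaction dis Dt a}--\eqref{eq: reaction dis Dt b} are the consistency and lag terms $E_5$ through $E_{10}$ of the previous proof, now paired against $e_p^{h,n+1}$ and $e_T^{h,n+1}$ instead of the increments. I would split each exactly as there: the temporal pieces (e.g., $\Delta t\,\xi_t^{n+1}-(\xi^n-\xi^{n-1})$ and $\Pi^{W_{p,h}}p^{n+1}-\Pi^{W_{p,h}}p^{n}-\Delta t\,p_t^{n+1}$) are bounded by Lemma~\ref{le: Taylor}, and the spatial pieces by the projection estimates \eqref{Pi Wp properties}--\eqref{Pi WT properties}; the lagged data $\xi^n_h-\xi^{n-1}_h$ contributes $e_\xi^{h,n}-e_\xi^{h,n-1}=d_\xi^{n}$, again an increment governed by Lemma~\ref{le: D estimate} via the inf--sup bound \eqref{ineq: applying infsup d}. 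Applying Young's inequality with a parameter $\eta\sim\Delta t$, the factors $\|e_p^{h,n+1}\|,\|e_T^{h,n+1}\|$ produced on the right are absorbed into the coercive diffusion term by the Poincar\'e inequality (legitimate since these errors lie in $H^1_0(\Omega)$). This $\eta\sim\Delta t$ scaling is precisely the mechanism that upgrades the $\Delta t^3,\,h^{2k}\Delta t,\,h^{2l+2}\Delta t$ bounds of Lemma~\ref{le: D estimate} to the $\Delta t^2,\,h^{2k},\,h^{2l+2}$ rates claimed here, since the reciprocal factor $C/\eta\sim\Delta t^{-1}$ multiplies those one-order-higher quantities.

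For the displacement and total pressure I would work only at the final level. Testing \eqref{eq: evun+1-evxin+1=0} with $\bm v_h=e_{\bm u}^{h,N+1}$ and \eqref{eq: -eun+1-exin+1+pn+1+Tn+1=0} with $\phi_h=e_\xi^{h,N+1}$, then combining the two, cancels the $(\nabla\cdot e_{\bm u}^{h,N+1},e_\xi^{h,N+1})$ coupling and leaves $2\mu\|\bm\varepsilon(e_{\bm u}^{h,N+1})\|^2+\tfrac1\lambda\|e_\xi^{h,N+1}\|^2$ equal to a right-hand side built from $e_\xi^{I,N+1}$ and the lagged data $p^{N+1}-p_h^{N}=(p^{N+1}-p^{N})+e_p^{I,N}+e_p^{h,N}$ together with its temperature counterpart. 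The consistency and projection pieces are estimated as above, while the previous-step errors $e_p^{h,N},e_T^{h,N}$ are already bounded by the parabolic estimate of the preceding paragraph (used with final index $N-1$). The inf--sup inequality \eqref{ineq: applying infsup d} applied to the full error then gives $\|e_\xi^{h,N+1}\|^2\lesssim\|\bm\varepsilon(e_{\bm u}^{h,N+1})\|^2$, recovering the stated $\|e_\xi^{h,N+1}\|^2$ contribution, and Lemma~\ref{le: couple N=1} supplies the initial term $B(U^{1},U^{1})$.

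The main obstacle is the temporal lag inherent in the parallel decoupling: the elasticity update at level $N+1$ uses the flow error at level $N$, and the flow update uses the stale increment $\xi^n_h-\xi^{n-1}_h$, so the elasticity and flow energies do not close as a single self-contained identity. The delicate point is to organize every lag-induced contribution so that it collapses to a discrete \emph{increment} $d^{n+1}$ controllable by Lemma~\ref{le: D estimate}, rather than a full error that would force a Gr\"onwall argument, and then to calibrate the Young parameter $\eta\sim\Delta t$ precisely so that Poincar\'e absorption gains exactly one power of $\Delta t$ while keeping the constants independent of $\lambda$ and uniform as $b_0\to0$—that is, preserving the parameter robustness advertised for the scheme.
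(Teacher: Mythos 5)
Your treatment of the parabolic block reproduces the paper's core mechanism almost exactly: testing \eqref{eq: reaction dis Dt a}--\eqref{eq: reaction dis Dt b} with $e_p^{h,n+1},e_T^{h,n+1}$, telescoping the symmetric storage form via Lemma~\ref{le: B(u,v)}, splitting the consistency terms with Lemma~\ref{le: Taylor} and \eqref{Pi Wp properties}--\eqref{Pi WT properties}, reducing the lagged datum $\xi_h^n-\xi_h^{n-1}$ to the increment $d_\xi^n$ controlled through \eqref{ineq: applying infsup d} and Lemma~\ref{le: D estimate}, and using Young weights $\eta\Delta t$ versus $C/(\eta\Delta t)$ to convert the $O(\Delta t^3+h^{2k}\Delta t+h^{2l+2}\Delta t)$ bound of Lemma~\ref{le: D estimate} into the claimed rates. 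The genuine gap is in how you close this estimate. You absorb the residual sums $\eta\Delta t\sum_n\|e_p^{h,n+1}\|^2$, $\eta\Delta t\sum_n\|e_T^{h,n+1}\|^2$ into the diffusion terms via Poincar\'e. That forces $\eta\lesssim k_m/C_P$ (and $\eta\lesssim\theta_m/C_P$), so the reciprocal $C/\eta$ multiplying every consistency term blows up like $k_m^{-1},\theta_m^{-1}$. The theorem's constant is $\tilde C=\tilde C(\alpha,\beta,\lambda,a_0,b_0,c_0)$, with no dependence on $\bm K$ or $\bm\Theta$, and the paper explicitly stress-tests $\bm K=\bm\Theta=10^{-9}\bm I$; your mechanism proves a strictly weaker statement that degenerates precisely in that regime. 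The Gr\"onwall step you are trying to avoid is not optional here: the paper absorbs these sums into the storage terms $(c_0-b_0)\|e_p^{h,n+1}\|^2$, $(a_0-b_0)\|e_T^{h,n+1}\|^2$ (and the elasticity energy) appearing on the left at every level $n$, and since they carry the weight $\eta_1\Delta t$ the discrete Gr\"onwall inequality applies cleanly with a constant $e^{C\tau}$ depending only on the parameters listed in $\tilde C$. There is no full-error obstruction of the kind you fear --- the increments are already handled by Lemma~\ref{le: D estimate}, and Gr\"onwall is only needed for these benign weighted sums.

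Your second deviation is to treat the elasticity/constraint pair only at the final level, testing \eqref{eq: evun+1-evxin+1=0} and \eqref{eq: -eun+1-exin+1+pn+1+Tn+1=0} at $n+1=N+1$. This produces the un-differenced lag term $\tfrac{\alpha}{\lambda}(p^{N+1}-p_h^{N},e_\xi^{h,N+1})$, whose piece $p^{N+1}-p^{N}$ satisfies only $\|p^{N+1}-p^{N}\|_{L^2(\Omega)}^2\le\Delta t\int_{t_N}^{t_{N+1}}\|p_t\|_{L^2(\Omega)}^2$, i.e.\ $O(\Delta t)$ under the stated regularity; reaching $O(\Delta t^2)$ requires a pointwise-in-time bound $\|p_t\|_{L^\infty(J;L^2(\Omega))}$ (available by embedding from $p_{tt}\in L^2(J;L^2(\Omega))$, but not among the constants $\tilde C_1,\tilde C_2,\tilde C_3$ as written). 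The paper sidesteps this entirely by working with the time-differenced constraint equation \eqref{eq: Dun+1+Dxin+1-Dpn+1-DTn+1 = Pi etc}, pairing everything against $e_\xi^{h,n+1}$, and letting $2\mu(\bm\varepsilon(e_{\bm u}^{h,n+1}),\bm\varepsilon(d_{\bm u}^{n+1}))$ telescope over $n$ so that only increments --- all of size $O(\Delta t^{3/2})$ or controlled by Lemma~\ref{le: D estimate} --- ever appear on the right. You should adopt that summed, differenced form for the elasticity block as well, and replace the Poincar\'e absorption by the discrete Gr\"onwall inequality; with those two changes your argument coincides with the paper's proof.
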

\begin{proof}

By choosing \(\bm v_h=d_{\bm u}^{n+1}\), \(\phi_h=e_{\xi}^{h,n+1}\), \(q_h=e_p^{h,n+1}\), \(S_h=e_T^{h,n+1}\) in \eqref{eq: evun+1-evxin+1=0}, \eqref{eq: Dun+1+Dxin+1-Dpn+1-DTn+1 = Pi etc}, \eqref{eq: reaction dis Dt a}, and \eqref{eq: reaction dis Dt b} respectively, we rearrange the terms to obtain the following:
\begin{equation}\label{eq: eDun+1-eDxin+1=0}
2\mu(\bm\varepsilon(e_{\bm u}^{h,n+1}), \bm\varepsilon(d_{\bm u}^{n+1})) 
                        - (\nabla\cdot d_{\bm u}^{n+1},e_{\xi}^{h,n+1})=0, 
\end{equation}  
\begin{equation}\label{eq: eDun+1+eDxin+1-eDpn+1-eDTn+1 = Pi etc}
\begin{aligned} 
  &(\nabla\cdot d_{\bm u}^{n+1},e_{\xi}^{h,n+1})
  +\tfrac{1}{\lambda}(d_{\xi}^{n+1},e_{\xi}^{h,n+1})
  -\tfrac{\alpha}{\lambda}(d_{p}^{n},e_{\xi}^{h,n+1})
  -\tfrac{\beta}{\lambda}(d_{T}^{n},e_{\xi}^{h,n+1})\\
 =& (\nabla\cdot(\bm u^{n+1}-\bm u^{n}-\Delta t\bm u_t^{n+1})),e_{\xi}^{h,n+1})
   +\tfrac{1}{\lambda}(\xi^{n+1}-\xi^{n}-\Delta t\xi_t^{n+1},e_{\xi}^{h,n+1})\\
 &  -\tfrac{1}{\lambda}(e_{\xi}^{I,n+1}-e_{\xi}^{I,n},e_{\xi}^{h,n+1}) 
  -\tfrac{\alpha}{\lambda}(\Pi^{W_{p,h}} p^n-\Pi^{W_{p,h}} p^{n-1}-\Delta t p_t^{n+1},e_{\xi}^{h,n+1})\\
 &  -\tfrac{\beta}{\lambda}(\Pi^{W_{T,h}} T^n-\Pi^{W_{T,h}} T^{n-1}-\Delta t T_t^{n+1},e_{\xi}^{h,n+1}) ,
\end{aligned}
\end{equation}  
\begin{equation}\label{eq: reaction dis Dt De a}
\begin{aligned}  
 &c_\alpha( d_{p}^{n+1},e_p^{h,n+1})
 + \Delta t(\bm K\nabla e_p^{h,n+1},\nabla e_p^{h,n+1})
 + c_{\alpha\beta}(d_{T}^{n+1},e_p^{h,n+1}) \\  
 =&\tfrac\alpha\lambda( \Delta t\xi_t^{n+1} - (\xi^{n}_h-\xi^{n-1}_h) ,e_p^{h,n+1}) 
  +c_\alpha( \Pi^{W_{p,h}} p^{n+1}-\Pi^{W_{p,h}} p^{n}-\Delta tp_t^{n+1},e_p^{h,n+1}) \\
 & + c_{\alpha\beta}(\Pi^{W_{T,h}} T^{n+1}-\Pi^{W_{T,h}} T^{n})-\Delta tT_t^{n+1},e_p^{h,n+1}) ,
 \\ 
 \end{aligned}
\end{equation}
and
\begin{equation}\label{eq: reaction dis Dt De b}
\begin{aligned}  
 & c_\beta(d_{T}^{n+1},e_T^{h,n+1})   
  +\Delta t(\bm\Theta\nabla e_T^{h,n+1},\nabla e_T^{h,n+1})
  +c_{\alpha\beta}( d_{p}^{n+1},e_T^{h,n+1})\\
 =& \tfrac\beta\lambda( \Delta t\xi_t^{n+1}- (\xi^{n}_h-\xi^{n-1}_h),e_T^{h,n+1}) 
   +c_{\alpha\beta}(\Pi^{W_{p,h}} p^{n+1}-\Pi^{W_{p,h}} p^{n}-\Delta tp_t^{n+1} ,e_T^{h,n+1}) \\
&   +c_\beta(\Pi^{W_{T,h}} T^{n+1}-\Pi^{W_{T,h}} T^{n}-\Delta tT_t^{n+1},e_T^{h,n+1}) 
 .\\
\end{aligned}
\end{equation}  
Then, we take the summation of \eqref{eq: eDun+1-eDxin+1=0}, \eqref{eq: eDun+1+eDxin+1-eDpn+1-eDTn+1 = Pi etc}, \eqref{eq: reaction dis Dt De a}, and \eqref{eq: reaction dis Dt De b} over the index \(n\) from \(1\) to \(N\) and rearrange the terms to obtain
\begin{equation}\label{ }
\begin{aligned}  
&LHS\\
:= &\sum_{n=1}^{N}\Big[ 2\mu(\bm\varepsilon(e_{\bm u}^{h,n+1}), \bm\varepsilon(d_{\bm u}^{n+1}))
 +\tfrac{1}{\lambda}(d_{\xi}^{n+1},e_{\xi}^{h,n+1})
  -\tfrac{\alpha}{\lambda}(d_{p}^{n+1},e_{\xi}^{h,n+1})
  -\tfrac{\beta}{\lambda}(d_{T}^{n+1},e_{\xi}^{h,n+1}) \\
&  +c_\alpha( d_{p}^{n+1} ,e_p^{h,n+1})
  +(a_0+\tfrac{ \beta^2}{\lambda})( d_{T}^{n+1} ,e_T^{h,n+1})   
  + \Delta t(\bm K\nabla e_p^{h,n+1},\nabla e_p^{h,n+1})\\
& +c_{\alpha\beta}( d_{p}^{n+1},e_T^{h,n+1})
  + c_{\alpha\beta}(d_{T}^{n+1},e_p^{h,n+1}) 
   +\Delta t(\bm\Theta\nabla e_T^{h,n+1},\nabla e_T^{h,n+1})\\
&- \tfrac{\alpha}{\lambda}(d_{\xi}^{n+1},e_{p}^{h,n+1})
   -\tfrac{\beta}{\lambda}(d_{\xi}^{n+1},e_{T}^{h,n+1})   
   \Big] \\
= & \sum_{i=1}^{11} D_i  , 
\end{aligned}
\end{equation}  
where 
\begin{equation*} 
\begin{aligned}  
&D_1=\sum_{n=1}^{N}(\nabla\cdot(\bm u^{n+1}-\bm u^{n}-\Delta t\bm 
                                        u_t^{n+1})),e_{\xi}^{h,n+1}), \\
\end{aligned}
\end{equation*} 
\begin{equation*} 
\begin{aligned}  
&D_2=\sum_{n=1}^{N} \tfrac{1}{\lambda}(\Pi^{Q_h}\xi^{n+1}-\Pi^{Q_h}\xi^{n}-\Delta t\xi_t^{n+1},e_{\xi}^{h,n+1}),\\
&D_3=\sum_{n=1}^{N}-\tfrac{\alpha}{\lambda}(\Pi^{W_{p,h}} p^n-\Pi^{W_{p,h}} p^{n-1}-\Delta t p_t^{n+1},e_{\xi}^{h,n+1}),\\
\end{aligned}
\end{equation*} 
\begin{equation*} 
\begin{aligned}  
&D_4=\sum_{n=1}^{N}-\tfrac{\beta}{\lambda}(\Pi^{W_{T,h}} T^n-\Pi^{W_{T,h}} T^{n-1}-\Delta t  T_t^{n+1},e_{\xi}^{h,n+1}), \\
&D_5=\sum_{n=1}^{N}\tfrac\alpha\lambda
( \Delta t\xi_t^{n+1}-\Pi^{Q_h} \xi^{n}+\Pi^{Q_h}\xi^{n-1},e_p^{h,n+1})
,
\\
&D_6=\sum_{n=1}^{N}c_\alpha( \Pi^{W_{p,h}} p^{n+1}-\Pi^{W_{p,h}} p^{n}-\Delta tp_t^{n+1},e_p^{h,n+1}),\\
\end{aligned}
\end{equation*} 
\begin{equation*} 
\begin{aligned}  
&D_7=\sum_{n=1}^{N}c_{\alpha\beta}(\Pi^{W_{T,h}} T^{n+1}-\Pi^{W_{T,h}} T^{n} -\Delta tT_t^{n+1},e_p^{h,n+1}) ,\\
&D_8=\sum_{n=1}^{N}\tfrac\beta\lambda
( \Delta t\xi_t^{n+1}-\Pi^{Q_h}\xi^{n}+\Pi^{Q_h}\xi^{n-1},e_T^{h,n+1})
,\\
\end{aligned}
\end{equation*} 
\begin{equation*} 
\begin{aligned}  
&D_9=\sum_{n=1}^{N}c_{\alpha\beta}(\Pi^{W_{p,h}} p^{n+1}-\Pi^{W_{p,h}} 
                   p^{n}-\Delta tp_t^{n+1} ,e_T^{h,n+1}),\\
&D_{10}=\sum_{n=1}^{N}c_\beta(\Pi^{W_{T,h}} T^{n+1}-\Pi^{W_{T,h}} T^{n}- 
                   \Delta tT_t^{n+1},e_T^{h,n+1}), \\
\end{aligned}
\end{equation*} 
and
\begin{equation*} 
\begin{aligned}  
D_{11}=&\sum_{n=1}^{N} 
    \tfrac{1}{\lambda}(\alpha d_{p}^{n}+\beta d_{T}^{n},e_{\xi}^{h,n+1})
   +\sum_{n=1}^{N} 
\tfrac{1}{\lambda}(\alpha d_{p}^{n+1}+\beta d_{T}^{n+1},e_{\xi}^{h,n+1})\\
   &+\sum_{n=1}^{N}
   \tfrac\alpha\lambda(d_{\xi}^{n}+d_{\xi}^{n+1},e_p^{h,n+1}) 
   +\sum_{n=1}^{N}
   \tfrac\beta\lambda(d_{\xi}^{n}+d_{\xi}^{n+1},e_T^{h,n+1}). 
\end{aligned}
\end{equation*} 
Rearranging \(LHS\) and applying Lemma \ref{le: B(u,v)}, we can estimate that

\begin{equation}\label{ }
\begin{aligned}  
LHS
= &\sum_{n=1}^{N}\Big[ 2\mu(\bm\varepsilon(e_{\bm u}^{h,n+1}), 
                       \bm\varepsilon(d_{\bm u}^{n+1}))\\
  &+\tfrac1\lambda(d_{\xi}^{n+1}-\alpha d_{p}^{n+1}-\beta d_{T}^{n+1}
                ,\ e_{\xi}^{h,n+1}-\alpha e_p^{h,n+1}-\beta e_T^{h,n+1})
                \\ 
  & + (c_0-b_0) ( d_{p}^{n+1} ,e_p^{h,n+1})
    + (a_0-b_0) ( d_{T}^{n+1} ,e_T^{h,n+1})   
    \\
  &  +  b_0( d_{p}^{n+1}-d_{T}^{n+1} ,e_p^{h,n+1}-e_T^{h,n+1})\\
  &  + \Delta t(\bm K\nabla e_p^{h,n+1},\nabla e_p^{h,n+1})  
    +\Delta t(\bm\Theta\nabla e_T^{h,n+1},\nabla e_T^{h,n+1}) 
       \Big] \\ 
\ge&\sum_{n=1}^{N}\Big[ 
      \mu \|\bm\varepsilon(e_{\bm u}^{h,n+1} )\|^2_{L^2(\Omega)}
              -\mu \|\bm\varepsilon(e_{\bm u}^{h,n} )\|^2_{L^2(\Omega)} \\
  &+\tfrac{1}{2\lambda} \|\ e_{\xi}^{h,n+1}-\alpha e_p^{h,n+1}-\beta 
                      e_T^{h,n+1}\|^2_{L^2(\Omega)}    
   -\tfrac{1}{2\lambda} \|\ e_{\xi}^{h,n}-\alpha e_p^{h,n}-\beta 
                      e_T^{h,n}\|^2_{L^2(\Omega)}  \\
  & +\tfrac{c_0-b_0}{2} \| e_p^{h,n+1} \|^2_{L^2(\Omega)}
                -\tfrac{c_0-b_0}{2}  \| e_p^{h,n} \|^2_{L^2(\Omega)}\\
  & +\tfrac{a_0-b_0}{2} \| e_T^{h,n+1} \|^2_{L^2(\Omega)} 
            -\tfrac{a_0-b_0}{2}  \| e_T^{h,n} \|^2_{L^2(\Omega)} 
    \\
  & + \tfrac{b_0}{2} \| e_p^{h,n+1}-e_T^{h,n+1} \|^2_{L^2(\Omega)} 
           - \tfrac{b_0}{2} \| e_p^{h,n}-e_T^{h,n} \|^2_{L^2(\Omega)} 
           \Big]  \\ 
  &  +\Delta t\sum_{n=1}^{N}\Big[ 
          (\bm K\nabla e_p^{h,n+1},\nabla e_p^{h,n+1})  
        + (\bm\Theta\nabla e_T^{h,n+1},\nabla e_T^{h,n+1})
        \Big],\\ 
\end{aligned}
\end{equation}  
which directly implies that
\begin{equation}\label{ineq: LHS ge etc.}
\begin{aligned}  
LHS 
\ge& \tfrac12\Big[ 
      2\mu \|\bm \varepsilon(e_{\bm u}^{h,N+1}) \|^2_{L^2(\Omega)}
                    -\mu \|\bm\varepsilon(e_{\bm u}^{h,1}  )\|^2_{L^2(\Omega)} \\
  &+\tfrac{1}{\lambda} \|\ e_{\xi}^{h,N+1}-\alpha e_p^{h,N+1}-\beta 
                      e_T^{h,N+1}\|^2_{L^2(\Omega)}    
   -\tfrac{1}{\lambda} \|\ e_{\xi}^{h,1}-\alpha e_p^{h,1}-\beta 
                      e_T^{h,1}\|^2_{L^2(\Omega)}  \\
  & +(c_0-b_0) \| e_p^{h,N+1} \|^2_{L^2(\Omega)}
                -(c_0-b_0)   \| e_p^{h,1} \|^2_{L^2(\Omega)}\\
  & +(a_0-b_0) \| e_T^{h,N+1} \|^2_{L^2(\Omega)} 
            -(a_0-b_0)  \| e_T^{h,1} \|^2_{L^2(\Omega)} 
    \\
  & +  b_0  \| e_p^{h,N+1}-e_T^{h,N+1} \|^2_{L^2(\Omega)} 
           -  b_0  \| e_p^{h,1}-e_T^{h,1} \|^2_{L^2(\Omega)} 
           \Big]  \\ 
  &  +\Delta t\sum_{n=1}^{N}\Big[ 
          (\bm K\nabla e_p^{h,n+1},\nabla e_p^{h,n+1})  
        + (\bm\Theta\nabla e_T^{h,n+1},\nabla e_T^{h,n+1})
        \Big].\\ 
\end{aligned}
\end{equation}  

Next, by applying Lemma \ref{le: Taylor},  the Cauchy-Schwarz inequality,
 and Young’s inequality, we can bound $D_1$ as:
\begin{equation*} 
\begin{aligned}  
D_1&=\sum_{n=1}^{N}(\nabla\cdot(\bm u^{n+1}-\bm u^{n}-\Delta t\bm 
               u_t^{n+1})),e_{\xi}^{h,n+1})\\ 
&\le \tfrac{1}{\Delta t}  \tfrac{C}{\eta_1}
\sum_{n=1}^{N} \| \bm u^{n+1} -\bm u^{n} - \Delta t 
      \bm u_t^{n+1} \|_{H^1(\Omega)}^2 
+ \eta_1\Delta t \sum_{n=1}^{N} \| e_{\xi}^{h,n+1} \|_{L^2(\Omega)}^2 \\
&= \tfrac{1}{\Delta t}  \tfrac{C}{\eta_1}\sum_{n=1}^{N} \| \int_{t_{n}}^{t_{n+1}}\bm u_{tt}(s) (s-t_{n}) ds \|_{H^1(\Omega)}^2 
+ \eta_1\Delta t  \sum_{n=1}^{N} \| e_{\xi}^{h,n+1} \|_{L^2(\Omega)}^2 \\
&\le \Delta t^2  \tfrac{C}{\eta_1}\int_{0}^{\tau} \left\| \bm u_{tt} \right\|_{H^1(\Omega)}^2 ds 
+ \eta_1\Delta t \sum_{n=1}^{N} \| e_{\xi}^{h,n+1} \|_{L^2(\Omega)}^2.
\end{aligned}
\end{equation*} 
By applying Lemma \ref{le: Taylor}, the Cauchy-Schwarz inequality, Young’s inequality, and the properties of the operators in \eqref{Pi V Q properties}-\eqref{Pi WT properties}, we can bound $D_2$ as:
 \begin{equation*} 
\begin{aligned}  
D_2=&\sum_{n=1}^{N} \tfrac{1}{\lambda}(\Pi^{Q_h}\xi^{n+1}-\Pi^{Q_h}\xi^{n}-\Delta t\xi_t^{n+1},e_{\xi}^{h,n+1})\\ 
 =&\sum_{n=1}^N\tfrac1\lambda ( \Pi^{Q_h} \xi^{n+1}-\Pi^{Q_h} \xi^{n}-(\xi^{n+1}-\xi^{n})
         +(\xi^{n+1}-\xi^{n})-\Delta t \xi_t^{n+1} , e_\xi^{h,n+1})\\
  \le&
   \tfrac1\lambda \left(h^{2k}
     \tfrac{1}{\Delta t} \tfrac{C}{\eta_1} \sum_{n=1}^N     \| \xi^{n+1}-\xi^{n}  \|_{H^k(\Omega)}^2
   + \Delta t^2  \tfrac{C}{\eta_1} \int_{t_1}^{\tau}\| \xi_{tt}  \|_{L^2(\Omega)}^2
    + \eta_1\Delta t \sum_{n=1}^N \| e_{\xi}^{h,n+1} \|_{L^2(\Omega)}^2
   \right)    \\    
  \le&
   \tfrac1\lambda \left(h^{2k} \tfrac{C}{\eta_1}
    \int_{0}^{\tau}\| \xi_t  \|_{H^k(\Omega)}^2
   + \Delta t^2  \tfrac{C}{\eta_1} \int_{0}^{\tau}\| \xi_{tt}  \|_{L^2(\Omega)}^2
    + \eta_1\Delta t  \sum_{n=1}^N        \| e_{\xi}^{h,n+1} \|_{L^2(\Omega)}^2
   \right),                      
\end{aligned}
\end{equation*} 
Similarly, \(D_3-D_{10}\) can be bounded as follows.
 \begin{equation*} 
\begin{aligned}  
D_3
=&\sum_{n=1}^{N}\Big[-\tfrac{\alpha}{\lambda}(\Pi^{W_{p,h}} p^n-\Pi^{W_{p,h}} p^{n-1}-\Delta t p_t^{n},e_{\xi}^{h,n+1})
  -(\Delta t p_t^{n}-\Delta t p_t^{n+1},e_{\xi}^{h,n+1})
  \Big]\\
\le&
\tfrac\alpha\lambda\Big(h^{2l+2}\tfrac{C}{\eta_1} 
    \int_{0}^{\tau}\| p_t  \|_{H^{l+1}(\Omega)}^2
   + \Delta t^2 \tfrac{C}{\eta_1}  \int_{0}^{\tau}\| p_{tt}  \|_{L^2(\Omega)}^2
    + \eta_1\Delta t \sum_{n=1}^N        \| e_{\xi}^{h,n+1} \|_{L^2(\Omega)}^2
   \Big), 
\end{aligned}
\end{equation*} 
\begin{equation*} 
\begin{aligned}  
D_4\le&
\tfrac\beta\lambda\Big(h^{2l+2}\tfrac{C}{\eta_1} 
    \int_{0}^{\tau}\| T_t  \|_{H^{l+1}(\Omega)}^2
   + \Delta t^2 \tfrac{C}{\eta_1}  \int_{0}^{\tau}\| T_{tt}  \|_{L^2(\Omega)}^2
    + \eta_1\Delta t  \sum_{n=1}^N        \| e_{\xi}^{h,n+1} \|_{L^2(\Omega)}^2 
   \Big), \\
\end{aligned}
\end{equation*} 
\begin{equation*} 
\begin{aligned}   
 D_5\le&
\tfrac\alpha\lambda\Big(h^{2k}\tfrac{C}{\eta_1} 
    \int_{0}^{\tau}\| \xi_t  \|_{H^{k}(\Omega)}^2
   + \Delta t^2 \tfrac{C}{\eta_1}  \int_{0}^{\tau}\| \xi_{tt}  \|_{L^2(\Omega)}^2
    + \eta_1\Delta t \sum_{n=1}^N \| e_{p}^{h,n+1} \|_{L^2(\Omega)}^2
\Big),\\
\end{aligned}
\end{equation*} 
 \begin{equation*} 
\begin{aligned}  
D_6
 \le&
  c_\alpha\left(h^{2l+2}\tfrac{C}{\eta_1} 
    \int_{0}^{\tau}\| p_t  \|_{H^{l+1}(\Omega)}^2
   + \Delta t^2 \tfrac{C}{\eta_1}  \int_{0}^{\tau}\| p_{tt}  \|_{L^2(\Omega)}^2
    + \eta_1\Delta t \sum_{n=1}^N        \| e_{p}^{h,n+1} \|_{L^2(\Omega)}^2
    \right),
\end{aligned}
\end{equation*} 
 \begin{equation*} 
\begin{aligned}   
 D_7
  \le&
  \left|c_{\alpha\beta}|\right|\left(h^{2l+2}\tfrac{C}{\eta_1} 
    \int_{0}^{\tau}\| T_t  \|_{H^{l+1}(\Omega)}^2
   + \Delta t^2 \tfrac{C}{\eta_1}  \int_{0}^{\tau}\| T_{tt}  \|_{L^2(\Omega)}^2
    + \eta_1\Delta t \sum_{n=1}^N        \| e_{p}^{h,n+1} \|_{L^2(\Omega)}^2
    \right),\\
\end{aligned}
\end{equation*} 
\begin{equation*} 
\begin{aligned}   
D_8\le&
\tfrac\beta\lambda\Big(h^{2k}\tfrac{C}{\eta_1} 
    \int_{0}^{\tau}\| \xi_t  \|_{H^{k}(\Omega)}^2
   + \Delta t^2 \tfrac{C}{\eta_1}  \int_{0}^{\tau}\| \xi_{tt}  \|_{L^2(\Omega)}^2
    + \eta_1\Delta t \sum_{n=1}^N \| e_{T}^{h,n+1} \|_{L^2(\Omega)}^2
\Big),\\
\end{aligned}
\end{equation*} 
 \begin{equation*} 
\begin{aligned}   
D_9  \le&\left|c_{\alpha\beta}|\right|\left(h^{2l+2}\tfrac{C}{\eta_1} 
    \int_{0}^{\tau}\| p_t  \|_{H^{l+1}(\Omega)}^2
   + \Delta t^2 \tfrac{C}{\eta_1}  \int_{0}^{\tau}\| p_{tt}  \|_{L^2(\Omega)}^2
    + \eta_1\Delta t  \sum_{n=1}^N   \| e_{T}^{h,n+1} \|_{L^2(\Omega)}^2
    \right),\\
\end{aligned}
\end{equation*} 
and
 \begin{equation*} 
\begin{aligned}   
D_{10} \le&c_\beta\left(h^{2l+2} \tfrac{C}{\eta_1} 
    \int_{0}^{\tau}\| T_t  \|_{H^{l+1}(\Omega)}^2
   + \Delta t^2 \tfrac{C}{\eta_1}   \int_{0}^{\tau}\| T_{tt}  \|_{L^2(\Omega)}^2
    + \eta_1\Delta t \sum_{n=1}^N        \| e_{T}^{h,n+1} \|_{L^2(\Omega)}^2
    \right).\\
\end{aligned}
\end{equation*} 
For \(D_{11}\), by Young's inequality and Cauchy-Schwarz inequality, we give the following estimate:
\begin{equation*} %\label{ineq: estimate D11}
\begin{aligned}  
 D_{11}\le& \sum_{n=1}^{N} \tfrac{\alpha^2}{\lambda^2}\tfrac{1}{\Delta t}\tfrac{C}{\eta_1}\| d_{p}^{n+1}\|_{L^2(\Omega)}^2
  + \sum_{n=1}^{N}\tfrac{\beta^2}{\lambda^2}\tfrac{1}{\Delta t}\tfrac{C}{\eta_1}\| d_{T}^{n+1}\|_{L^2(\Omega)}^2\\
 & +\sum_{n=1}^{N}\eta_1\Delta t
   \| e_{\xi}^{h,n+1} \|_{L^2(\Omega)}^2
   +\sum_{n=1}^{N}\eta_1\Delta t
   \| e_{\xi}^{h,n+1} \|_{L^2(\Omega)}^2\\
 &+\sum_{n=1}^{N} \tfrac{\alpha^2}{\lambda^2}\tfrac{1}{\Delta t}\tfrac{C}{\eta_1}
   \| d_{\xi}^{n+1}\|_{L^2(\Omega)}^2
   +\sum_{n=1}^{N}  \eta_1\Delta t 
   \| e_{p}^{h,n+1} \|_{L^2(\Omega)}^2 \\
  &+\sum_{n=1}^{N} \tfrac{\beta^2}{\lambda^2}\tfrac{1}{\Delta t}\tfrac{C}{\eta_1}
   \| d_{\xi}^{n+1}\|_{L^2(\Omega)}^2
   +\sum_{n=1}^{N}\eta_1\Delta t 
   \| e_{T}^{h,n+1} \|_{L^2(\Omega)}^2.
\end{aligned}
\end{equation*} 

Then, according to inequality \eqref{ineq: LHS ge etc.}, the estimates of $D_1$-$D_{11}$, and the equation $LHS=\sum_{i=1}^{11}D_i$, we have
\begin{equation}\label{eq: left le LHS le right+Ds}
\begin{aligned}  
 & 
      2\mu \|\bm\varepsilon(e_{\bm u}^{h,N+1} )\|^2_{L^2(\Omega)}
                       +\tfrac{1}{\lambda} \|\ e_{\xi}^{h,N+1}-\alpha e_p^{h,N+1}-\beta 
                      e_T^{h,N+1}\|^2_{L^2(\Omega)}    
  +(c_0-b_0) \| e_p^{h,N+1} \|^2_{L^2(\Omega)} \\
  & +(a_0-b_0) \| e_T^{h,N+1} \|^2_{L^2(\Omega)}  
   +b_0  \| e_p^{h,N+1}-e_T^{h,N+1} \|^2_{L^2(\Omega)}  
             \\ 
  &  +\Delta t\sum_{n=1}^{N}\Big[ 
          (\bm K\nabla e_p^{h,n+1},\nabla e_p^{h,n+1})  
        + (\bm\Theta\nabla e_T^{h,n+1},\nabla e_T^{h,n+1})
       \Big] \\
\le&\mathbb C
\Bigg(
 h^{2k}\tfrac{C}{ \eta_1} 
     \int_{0}^{\tau}\| \xi_t  \|_{H^{k}(\Omega)}^2   
    +h^{2l+2}\tfrac{C}{ \eta_1} (
    \int_{0}^{\tau}\| p_t  \|_{H^{l+1}(\Omega)}^2
   +  \int_{0}^{\tau}\| T_t  \|_{H^{l+1}(\Omega)}^2 
    )\\
&+ \Delta t^2\tfrac{C}{ \eta_1} (\int_{0}^{\tau} \left\| \bm u_{tt} \right\|_{H^1(\Omega)}^2
             +\int_{0}^{\tau}\| \xi_{tt}  \|_{L^2(\Omega)}^2
             + \int_{0}^{\tau}\| p_{tt}  \|_{L^2(\Omega)}^2
             +\int_{0}^\tau \| T_{tt}  \|_{L^2(\Omega)}^2)
\\
&+ \eta_1\Delta t \sum_{n=1}^{N}\| e_{\xi}^{h,n+1} \|_{L^2(\Omega)}^2
 +\eta_1\Delta t \sum_{n=1}^N \| e_{p}^{h,n+1} \|_{L^2(\Omega)}^2
 +\eta_1\Delta t \sum_{n=1}^N \| e_{T}^{h,n+1} \|_{L^2(\Omega)}^2\\
&+\sum_{n=1}^{N}\tfrac{1}{\Delta t}\tfrac{C}{ \eta_1} 
        (\| d_{\xi}^{n+1} \|_{L^2(\Omega)}^2 
               +\| d_{p}^{n+1}\|_{L^2(\Omega)}^2
       +\| d_{T}^{n+1} \|_{L^2(\Omega)}^2)          
       \\
&+\|e_{\bm u}^{h,1} \|^2_{L^2(\Omega)}
 +\| e_{\xi}^{h,1} \|_{L^2(\Omega)}^2 
  +\| e_{p}^{h,1} \|_{L^2(\Omega)}^2 
  +\| e_{T}^{h,1} \|_{L^2(\Omega)}^2
  \Bigg ),\\
\end{aligned}
\end{equation}  
where $\mathbb{C}$ denotes a constant depending on $a_0$, $b_0$, $c_0$, $\alpha$, $\beta$, and $\lambda$.  
Proceeding analogously to the derivation in \eqref{ineq: applying infsup d}, we obtain the following relations from equation \eqref{eq: evun+1-evxin+1=0}:
\begin{equation}\label{ineq: applying infsup e}
\begin{aligned} 
 \|e_\xi^{h,n+1}\|_{L^2(\Omega)}^2\le 
 \sup_{v_h\in \bm V_h} \tfrac{(\nabla\cdot \bm v_h,e_\xi^{h,n+1})}{\|\bm v_h\|_{H^1(\Omega)}}
 =\sup_{v_h\in \bm V_h}
 \tfrac{2\mu(\bm\varepsilon(e_{\bm u}^{h,n+1}), \bm\varepsilon(\bm v_h))}{\|\bm v_h\|_{H^1(\Omega)}}
 \le \tfrac{2\mu}{\gamma}  \|\bm\varepsilon(e_{\bm u}^{h,n+1})\|_{L^2(\Omega)}^2.
\end{aligned}
\end{equation}  
Bounding the term 
\[ \sum_{n=1}^{N}\tfrac{1}{\Delta t}(\| d_{\xi}^{n+1} \|_{L^2(\Omega)}^2 
               +\| d_{p}^{n+1}\|_{L^2(\Omega)}^2
       +\| d_{T}^{n+1} \|_{L^2(\Omega)}^2)   
\] 
by Lemma \ref{le: D estimate} and \eqref{ineq: applying infsup d}, bounding the term 
\[
\|e_{\bm u}^{h,1} \|^2_{L^2(\Omega)}
 +\| e_{\xi}^{h,1} \|_{L^2(\Omega)}^2 
  +\| e_{p}^{h,1} \|_{L^2(\Omega)}^2 
  +\| e_{T}^{h,1} \|_{L^2(\Omega)}^2
\]
by Lemma \ref{le: couple N=1} and \eqref{ineq: applying infsup e}, we can reformulate \eqref{eq: left le LHS le right+Ds} as:
\begin{equation*}%\label{eq: left le LHS le right+Ds}
\begin{aligned}  
 & 
      2\mu \|\bm\varepsilon(e_{\bm u}^{h,N+1} )\|^2_{L^2(\Omega)}
                       +\tfrac{1}{\lambda} \|\ e_{\xi}^{h,N+1}-\alpha e_p^{h,N+1}-\beta 
                      e_T^{h,N+1}\|^2_{L^2(\Omega)}    
  +(c_0-b_0) \| e_p^{h,N+1} \|^2_{L^2(\Omega)} \\
  & +(a_0-b_0) \| e_T^{h,N+1} \|^2_{L^2(\Omega)}  
   +b_0  \| e_p^{h,N+1}-e_T^{h,N+1} \|^2_{L^2(\Omega)}  
             \\ 
  &  +\Delta t\sum_{n=1}^{N}\Big[ 
          (\bm K\nabla e_p^{h,n+1},\nabla e_p^{h,n+1})  
        + (\bm\Theta\nabla e_T^{h,n+1},\nabla e_T^{h,n+1})
       \Big] \\
\le&\mathbb C
\Bigg(
 h^{2k}\tfrac{C}{\eta_1} 
    \int_{0}^{\tau}\| \xi_t  \|_{H^k(\Omega)}^2   
    +h^{2l+2}\tfrac{C}{\eta_1}(
    \int_{0}^{\tau}\| p_t  \|_{H^{l+1}(\Omega)}^2
   +  \int_{0}^{\tau}\| T_t  \|_{H^{l+1}(\Omega)}^2 
    )\\
&+ \Delta t^2\tfrac{C}{\eta_1}(\int_{0}^{\tau} \left\| \bm u_{tt} \right\|_{H^1(\Omega)}^2
             +\int_{0}^{\tau}\| \xi_{tt}  \|_{L^2(\Omega)}^2
             + \int_{0}^{\tau}\| p_{tt}  \|_{L^2(\Omega)}^2
             +\int_{0}^\tau \| T_{tt}  \|_{L^2(\Omega)}^2)
\\
&+ \eta_1\Delta t \sum_{n=1}^{N}\Big(
 2\mu \|\bm\varepsilon(e_{\bm u}^{h,n+1})\|_{L^2(\Omega)}^2
 +\tfrac{1}{\lambda} \|\ e_{\xi}^{h,n+1}-\alpha e_p^{h,n+1}-\beta 
                      e_T^{h,n+1}\|^2_{L^2(\Omega)}  \\
     &   +  (c_0-b_0)\| e_{p}^{h,n+1} \|_{L^2(\Omega)}^2
         +  (a_0-b_0)\| e_{T}^{h,n+1} \|_{L^2(\Omega)}^2
         +b_0  \| e_p^{h,n+1}-e_T^{h,n+1} \|^2_{L^2(\Omega)}    \Big )
         \Bigg ),                  
\end{aligned}
\end{equation*}  
which, by applying the discrete Grönwall inequality and choosing $\eta_1$ sufficiently small, leads to the following result:
\begin{equation*}%\label{eq: left le LHS le right}
\begin{aligned}  
 & 
      2\mu \|\bm\varepsilon(e_{\bm u}^{h,N+1} )\|^2_{L^2(\Omega)}
                       +\tfrac{1}{\lambda} \|\ e_{\xi}^{h,N+1}-\alpha e_p^{h,N+1}-\beta 
                      e_T^{h,N+1}\|^2_{L^2(\Omega)}    
  +(c_0-b_0) \| e_p^{h,N+1} \|^2_{L^2(\Omega)} \\
  & +(a_0-b_0) \| e_T^{h,N+1} \|^2_{L^2(\Omega)}  
   +b_0  \| e_p^{h,N+1}-e_T^{h,N+1} \|^2_{L^2(\Omega)}  
             \\ 
  &  +\Delta t\sum_{n=1}^{N}\Big[ 
          (\bm K\nabla e_p^{h,n+1},\nabla e_p^{h,n+1})  
        + (\bm\Theta\nabla e_T^{h,n+1},\nabla e_T^{h,n+1})
       \Big] \\
\le&\tilde C
\Bigg(
 h^{2k}(
    \int_{0}^{\tau}\| \xi_t  \|_{H^k(\Omega)}^2 
    +\int_{0}^{\tau}\| \xi_t  \|_{H^{k}(\Omega)}^2  
    ) 
    +h^{2l+2}(
    \int_{0}^{\tau}\| p_t  \|_{H^{l+1}(\Omega)}^2
   +  \int_{0}^{\tau}\| T_t  \|_{H^{l+1}(\Omega)}^2 
    )\\
&+ \Delta t^2(\int_{0}^{\tau} \left\| \bm u_{tt} \right\|_{H^1(\Omega)}^2
             +\int_{0}^{\tau}\| \xi_{tt}  \|_{L^2(\Omega)}^2
             + \int_{0}^{\tau}\| p_{tt}  \|_{L^2(\Omega)}^2
             +\int_{0}^\tau \| T_{tt}  \|_{L^2(\Omega)}^2)\Bigg ).           
\end{aligned}
\end{equation*}  
 This yields the desired result. 
\end{proof}

\begin{remark}\label{re: robust}
It is important to note that Theorem \ref{th: error estimate} yields the following inequality: 
$$
\begin{aligned} 
    & %\max_{0\le n\le l}\Big[
       \mu\Vert\bm\varepsilon(e_{\bm u}^{h,N+1})\Vert_{L^2(\Omega)}^2  
       +\Vert  e_\xi^{h,N+1}\Vert_{L^2(\Omega)}^2 
                      +  \Delta t \sum_{n=1}^N \big(  
            (\bm K\nabla e_p^{h,n+1},\nabla e_p^{h,n+1})  
        + (\bm\Theta\nabla e_T^{h,n+1},\nabla e_T^{h,n+1}) \big)\\
    \le &  \tilde C_1\Delta t^2 + \tilde C_2 h^{2k} + \tilde C_3 h^{2l+2}, 
    \end{aligned}
$$
even in a limit situation where $a_0=b_0=c_0$.
Based on \textbf{Assumptions} (A1), the above inequality further implies that
$$
 \Delta t  \big(  
            k_m\Vert\nabla e_p^{h,N+1}\Vert_{L^2(\Omega)}^2 
        + \theta_m\Vert\nabla e_T^{h,N+1}\Vert_{L^2(\Omega)}^2\big)
    \le  \tilde C_1\Delta t^2 + \tilde C_2 h^{2k} + \tilde C_3 h^{2l+2},  
$$
which means that the optimal error estimates of $e_p^{h,N+1}$ and $e_T^{h,N+1}$ in the $H^1$ are obtained.
    % %It is noteworthy that, as established in the proofs of Lemma \ref{le: D estimate} and Theorem \ref{th: error estimate}, the conclusions remain valid in the degenerate case where $a_0=b_0=c_0=0$, provided $\eta_1$ is chosen sufficiently small.  
    % Under the additional parameter constraints
    % \[ \lambda\ge1, ~a_0, ~b_0, ~c_0, ~\alpha, ~\beta\le 1, 
    % a_0-b_0>\sigma, ~c_0-b_0>\sigma, \]where $\sigma$ is a small positive constant,  a refined analysis of the same proofs reveals that both Lemma \ref{le: D estimate} and Theorem \ref{th: error estimate} exhibit parameter robustness under the above parameter regime.
\end{remark}

\subsection{A priori estimate for the split sequential algorithms}
In this subsection, we present the a priori estimates for \textbf{Algorithm 1} and \textbf{Algorithm 2}.  
Since the results parallel those of the previous subsection and rely on the same proof techniques, we state the following theorems without providing detailed proofs.
\begin{theorem}[estimate for \textbf{Algorithm 1}]
    Let \((\bm u,~\xi,~p,~T)\) and \((\bm u_h^{n+1},~\xi_h^{n+1},~p_h^{n+1},~T_h^{n+1})\) for \(n\ge1\) be the solutions of Eqs. \eqref{TP_Model_var} and Eqs. \eqref{decoupled StR a}-\eqref{decoupled StR d}, respectively. 
    Let $e_{\bm u}^{n+1},~e_{\xi}^{n+1},~e_{p}^{n+1},~e_{T}^{n+1}$ be differences between the solutions at the time step $n+1$ of problem \eqref{TP_Model_var} and the numerical solutions of \textbf{Algorithm 1} and decompose the error terms into two parts like the form of \eqref{eq: e=eI+eh}.
    There holds
    \begin{equation}\label{}
    \begin{aligned} 
    & %\max_{0\le n\le l}\Big[
       \mu\Vert\bm\varepsilon(e_{\bm u}^{h,N+1})\Vert_{L^2(\Omega)}^2  
       +\Vert  e_\xi^{h,N+1}\Vert_{L^2(\Omega)}^2 
      +\tfrac{c_0-b_0}{2} \Vert  e_p^{h,N+1}\Vert_{L^2(\Omega)}^2  
       +\tfrac{a_0-b_0}{2} \Vert  e_T^{h,N+1}\Vert_{L^2(\Omega)}^2 
                      \\% \Big] \\
                       &+  \Delta t \sum_{n=1}^N \big(  
        (\bm K\nabla e_p^{h,n+1},\nabla e_p^{h,n+1})  
        + (\bm\Theta\nabla e_T^{h,n+1},\nabla e_T^{h,n+1}) \big)\\
    \le &  \tilde C_1\Delta t^2 + \tilde C_2 h^{2k} + \tilde C_3 h^{2l+2},  
    \end{aligned}
    \end{equation} 
     where $\tilde C_1$, $\tilde C_2$, and $\tilde C_3$ are the same as  those of $\tilde C_1$, $\tilde C_2$, and $\tilde C_3$ in Theorem \ref{th: error estimate}.    
\end{theorem}
\begin{theorem}[estimate for \textbf{Algorithm 2}]
    Let \((\bm u,~\xi,~p,~T)\) and \((\bm u_h^{n+1},~\xi_h^{n+1},~p_h^{n+1},~T_h^{n+1})\) for \(n\ge1\) be the solutions of Eqs. \eqref{TP_Model_var} and Eqs. \eqref{decoupled RtS a}-\eqref{decoupled RtS d}, respectively. 
    Let $e_{\bm u}^{n+1},~e_{\xi}^{n+1},~e_{p}^{n+1},~e_{T}^{n+1}$ be differences between the solutions at the time step $n+1$ of problem \eqref{TP_Model_var} and the numerical solutions of \textbf{Algorithm 2} and decompose the error terms into two parts like the form of \eqref{eq: e=eI+eh}.
    There holds
    \begin{equation}\label{}
    \begin{aligned} 
    & %\max_{0\le n\le l}\Big[
       \mu\Vert\bm\varepsilon(e_{\bm u}^{h,N+1})\Vert_{L^2(\Omega)}^2  
       +\Vert  e_\xi^{h,N+1}\Vert_{L^2(\Omega)}^2 
      +\tfrac{c_0-b_0}{2} \Vert  e_p^{h,N+1}\Vert_{L^2(\Omega)}^2  
       +\tfrac{a_0-b_0}{2} \Vert  e_T^{h,N+1}\Vert_{L^2(\Omega)}^2 
                      \\% \Big] \\
                       &+  \Delta t \sum_{n=1}^N \big(  
          (\bm K\nabla e_p^{h,n+1},\nabla e_p^{h,n+1})  
        + (\bm\Theta\nabla e_T^{h,n+1},\nabla e_T^{h,n+1}) \big)\\
    \le &  \tilde C_1\Delta t^2 + \tilde C_2 h^{2k} + \tilde C_3 h^{2l+2},  
    \end{aligned}
    \end{equation} 
    where $\tilde C_1$, $\tilde C_2$, and $\tilde C_3$ are the same as  those of $\tilde C_1$, $\tilde C_2$, and $\tilde C_3$ in Theorem \ref{th: error estimate}.    
\end{theorem}

\section{Numerical experiments}\label{sec: experiments}
In this section, we present numerical experiments to evaluate the computational accuracy and efficiency of the algorithms introduced in Section \ref{sec: algorithms}. 
The purpose is to examine the performance of the three schemes under various physical parameter settings.  
A uniform triangular mesh \(\mathcal{T}_h\) is generated over the domain \(\Omega = [0,1] \times [0,1]\), and the final time is set to \(\tau = 1\).  
The experiments are carried out on the following benchmark problems.

{\bfseries Example 1:}
We appropriately select the body force \(\bm f\), the mass source \(g\) and the heat source \(H_s\) to ensure that the exact solution is obtained.
\begin{equation} \label{Ex: example1}
\begin{aligned}   
      & \bm u(x,y,t)= e^{-t}\left(\begin{array}{c}  
     \sin(2\pi y)(\cos(2\pi x) -1)+\tfrac{1}{\mu+\lambda}\sin(\pi x) \sin(\pi y)  \\  
     \sin(2\pi x)(1-\cos(2\pi y) )+\tfrac{1}{\mu+\lambda}\sin(\pi x) \sin(\pi y)   \\
                                 \end{array}\right),   \\
      & p(x,y,t) =   e^{-t}\sin(\pi x) \sin(\pi y) ,      \\
      & T(x,y,t) =   e^{-t}\cos(\pi x) \cos(\pi y) .
\end{aligned}
\end{equation} 
We impose Dirichlet boundary conditions $\bm{u} = \bm{0}$ on the boundary segment $\Gamma_d = \{(x,y)\,|\, 0 \le y \le 1,\, x = 0 \text{ or } 1\}$, and Neumann boundary conditions on $\Gamma_n = \{(x,y)\,|\, 0 \le x \le 1,\, y = 0 \text{ or } 1\}$. For the pressure $p$ and temperature $T$, homogeneous Dirichlet boundary conditions are prescribed on the entire boundary $\partial\Omega$. The initial conditions for $\bm{u}^0$, $p^0$, and $T^0$ at time $t = 0$ are given by the expression in \eqref{Ex: example1}.
% The corresponding boundary conditions and initial conditions are given as:
% \begin{equation*}   
%      \bm u(x,y,t)= \bm 0,~p(x,y,t) =   0 ,~T(x,y,t) =   0 ,  \quad \mbox{on}~\partial \Omega,
% \end{equation*} 
% and
% \begin{equation*} 
% \begin{aligned}   
%       & \bm u(x,y,0)= \left(\begin{array}{c}  
%      \sin(2\pi y)(\cos(2\pi x) -1)+\tfrac{1}{\mu+\lambda}\sin(\pi x) \sin(\pi y)  \\  
%      \sin(2\pi x)(1-\cos(2\pi y) )+\tfrac{1}{\mu+\lambda}\sin(\pi x) \sin(\pi y)   \\
%                                  \end{array}\right),   \\
%       & p(x,y,0) =   \sin(\pi x) \sin(\pi y)    ,      \\
%       & T(x,y,0) =   \sin(\pi x) \sin(\pi y)   .
% \end{aligned}
% \end{equation*} 

In the numerical experiments, we employ uniform triangular meshes with mesh sizes $h$ specified in the corresponding tables. These meshes are generated through successive midpoint bisections of each triangle. At the final simulation time $\tau$, we report the computed $L^2$-norm errors for $\xi$, and $H^1$-norm errors for $\bm u$, $p$, and $T$, along with their associated convergence rates.

Table \ref{tab: time order} reports the temporal convergence results.  
In this experiment, the mesh size is fixed at $h = 1/100$, with polynomial degrees $k = 3$ and $l = 2$ used for spatial discretization.  
The physical parameters are chosen as $\nu = 0.3$, $E = 1$, $c_0 = a_0 = 0.2$, $b_0 = 0.1$, $\alpha = \beta = 0.1$, and $\bm{K} = \bm{\Theta} = 0.1\bm{I}$, where $\bm{I}$ denotes the $2 \times 2$ identity matrix.  
As the time step $\Delta t$ is successively halved, the $H^1$-error of $\bm u$, the $L^2$-error of $\xi$, and the $H^1$-errors of $p$ and $T$ exhibit nearly first-order convergence, in agreement with the theoretical analysis for time error.

\begin{table}[htbp] \tiny  
\caption{ $\nu=0.3,E=1,a_0=c_0=0.2, b_0=0.1,\alpha=\beta=0.1,\bm K=\bm \Theta=\bm I,k=3,l=2 $. }
\centering
\begin{tabular}{cccccccccc}
\toprule
  $h$&$\Delta t$&$\Vert\bm u\Vert_{H^1}$ &rate &$\Vert\xi\Vert_{L^2}$ &rate &$\Vert p\Vert_{H^1}$ &rate&$\Vert T\Vert_{H^1}$ &rate \\ 
\midrule
Algorithm 1\\
\midrule
$1/100$ & $1/4$ & 6.85174e-03 & 0.00 & 4.24704e-03 & 0.00 & 2.61350e-03 & 0.00 & 2.47354e-03 & 0.00 \\
$1/100$ & $1/8$ & 3.20990e-03 & 1.09 & 1.98944e-03 & 1.09 & 1.26027e-03 & 1.05 & 1.18508e-03 & 1.06 \\
$1/100$ & $1/16$ & 1.55476e-03 & 1.05 & 9.63605e-04 & 1.05 & 6.21505e-04 & 1.02 & 5.83573e-04 & 1.02 \\
$1/100$ & $1/32$ & 7.65229e-04 & 1.02 & 4.74265e-04 & 1.02 & 3.15299e-04 & 0.98 & 2.96718e-04 & 0.98 \\
\midrule
Algorithm 2\\
\midrule
$1/100$ & $1/4$ & 2.59292e-04 & 0.00 & 1.70227e-04 & 0.00 & 7.60375e-03 & 0.00 & 7.58749e-03 & 0.00 \\
$1/100$ & $1/8$ & 1.15386e-04 & 1.17 & 7.56703e-05 & 1.17 & 3.39643e-03 & 1.16 & 3.38354e-03 & 1.17 \\
$1/100$ & $1/16$ & 5.50473e-05 & 1.07 & 3.59914e-05 & 1.07 & 1.61898e-03 & 1.07 & 1.61169e-03 & 1.07 \\
$1/100$ & $1/32$ & 2.71875e-05 & 1.02 & 1.75596e-05 & 1.04 & 7.93321e-04 & 1.03 & 7.89488e-04 & 1.03 \\
\midrule
Algorithm 3\\
\midrule
$1/100$ & $1/4$ & 6.89452e-03 & 0.00 & 4.27730e-03 & 0.00 & 7.49815e-03 & 0.00 & 7.50810e-03 & 0.00 \\
$1/100$ & $1/8$ & 3.20469e-03 & 1.11 & 1.98585e-03 & 1.11 & 3.38022e-03 & 1.15 & 3.39640e-03 & 1.14 \\
$1/100$ & $1/16$ & 1.55259e-03 & 1.05 & 9.62117e-04 & 1.05 & 1.61227e-03 & 1.07 & 1.61796e-03 & 1.07 \\
$1/100$ & $1/32$ & 7.64222e-04 & 1.02 & 4.73574e-04 & 1.02 & 7.90130e-04 & 1.03 & 7.92451e-04 & 1.03 \\
\bottomrule
\label{tab: time order}
\end{tabular}
\end{table}

\begin{table}[htbp] \tiny  
\caption{ $\nu=0.3,E=1,a_0=c_0=0.2, b_0=0.1,\alpha=\beta=0.1,\bm K=\bm \Theta=\bm I,k=2,l=1 $. }
\centering
\begin{tabular}{cccccccccc}
\toprule
  $h$&$\Delta t$&$\Vert\bm u\Vert_{H^1}$ &rate &$\Vert\xi\Vert_{L^2}$ &rate &$\Vert p\Vert_{H^1}$ &rate&$\Vert T\Vert_{H^1}$ &rate \\ 
\midrule
Algorithm 1\\
\midrule
$1/4$ & $1/4$ & 5.29575e-01 & 0.00 & 4.90883e-02 & 0.00 & 3.02299e-01 & 0.00 & 3.07582e-01 & 0.00 \\
$1/8$ & $1/16$ & 1.45378e-01 & 1.87 & 1.02454e-02 & 2.26 & 1.57993e-01 & 0.94 & 1.58713e-01 & 0.95 \\
$1/16$ & $1/64$ & 3.73916e-02 & 1.96 & 2.32919e-03 & 2.14 & 7.99174e-02 & 0.98 & 8.00095e-02 & 0.99 \\
$1/32$ & $1/256$ & 9.42485e-03 & 1.99 & 5.55704e-04 & 2.07 & 4.00760e-02 & 1.00 & 4.00876e-02 & 1.00 \\
\midrule
Algorithm 2\\
\midrule
$1/4$ & $1/4$ & 5.29752e-01 & 0.00 & 4.87496e-02 & 0.00 & 3.02362e-01 & 0.00 & 3.07643e-01 & 0.00 \\
$1/8$ & $1/16$ & 1.45398e-01 & 1.87 & 1.01787e-02 & 2.26 & 1.58000e-01 & 0.94 & 1.58720e-01 & 0.95 \\
$1/16$ & $1/64$ & 3.73924e-02 & 1.96 & 2.31723e-03 & 2.14 & 7.99182e-02 & 0.98 & 8.00103e-02 & 0.99 \\
$1/32$ & $1/256$ & 9.42468e-03 & 1.99 & 5.53153e-04 & 2.07 & 4.00761e-02 & 1.00 & 4.00877e-02 & 1.00 \\
\midrule
Algorithm 3\\
\midrule
$1/4$ & $1/4$ & 5.29575e-01 & 0.00 & 4.90926e-02 & 0.00 & 3.02360e-01 & 0.00 & 3.07640e-01 & 0.00 \\
$1/8$ & $1/16$ & 1.45378e-01 & 1.87 & 1.02454e-02 & 2.26 & 1.58000e-01 & 0.94 & 1.58720e-01 & 0.95 \\
$1/16$ & $1/64$ & 3.73916e-02 & 1.96 & 2.32917e-03 & 2.14 & 7.99182e-02 & 0.98 & 8.00103e-02 & 0.99 \\
$1/32$ & $1/256$ & 9.42485e-03 & 1.99 & 5.55697e-04 & 2.07 & 4.00761e-02 & 1.00 & 4.00877e-02 & 1.00 \\ 
\bottomrule
\label{tab: nu 0.3}
\end{tabular}
\end{table} 
We next examine the effect of the Poisson ratio $\nu$ on the performance of the algorithms, while keeping all other parameters fixed as described earlier.  
The results in Table \ref{tab: nu 0.3}, together with Tables \ref{tab: K Theta 10-9}--\ref{tab: k3l2}, correspond to the compressible case with $\nu = 0.3$.  
To probe the nearly incompressible regime, we set $\nu = 0.499$ in Table \ref{tab: nu 0.49}, with all remaining parameters unchanged.  
As $\nu \to 0.5$, implying $\lambda \to \infty$, the problem approaches the incompressible Stokes limit.  
Nevertheless, the numerical results show that all algorithms achieve optimal convergence in the energy norms, demonstrating robustness with respect to $\nu$.

\begin{table}[htbp] \tiny  
\caption{ $\nu=0.499,E=1,a_0=c_0=0.2, b_0=0.1,\alpha=\beta=0.1,\bm K=\bm \Theta=\bm I,k=2,l=1 $. }
\centering
\begin{tabular}{cccccccccc}
\toprule
  $h$&$\Delta t$&$\Vert\bm u\Vert_{H^1}$ &rate &$\Vert\xi\Vert_{L^2}$ &rate &$\Vert p\Vert_{H^1}$ &rate&$\Vert T\Vert_{H^1}$ &rate \\ 
\midrule
Algorithm 1\\
\midrule
$1/4$ & $1/4$ & 5.31525e-01 & 0.00 & 8.12402e-02 & 0.00 & 3.02292e-01 & 0.00 & 3.07576e-01 & 0.00 \\
$1/8$ & $1/16$ & 1.44897e-01 & 1.88 & 1.69559e-02 & 2.26 & 1.57992e-01 & 0.94 & 1.58713e-01 & 0.95 \\
$1/16$ & $1/64$ & 3.71835e-02 & 1.96 & 3.85633e-03 & 2.14 & 7.99173e-02 & 0.98 & 8.00094e-02 & 0.99 \\
$1/32$ & $1/256$ & 9.36343e-03 & 1.99 & 9.20079e-04 & 2.07 & 4.00760e-02 & 1.00 & 4.00876e-02 & 1.00 \\
\midrule
Algorithm 2\\
\midrule
$1/4$ & $1/4$ & 5.31526e-01 & 0.00 & 8.12391e-02 & 0.00 & 3.02292e-01 & 0.00 & 3.07576e-01 & 0.00 \\
$1/8$ & $1/16$ & 1.44897e-01 & 1.88 & 1.69557e-02 & 2.26 & 1.57992e-01 & 0.94 & 1.58713e-01 & 0.95 \\
$1/16$ & $1/64$ & 3.71835e-02 & 1.96 & 3.85631e-03 & 2.14 & 7.99173e-02 & 0.98 & 8.00094e-02 & 0.99 \\
$1/32$ & $1/256$ & 9.36343e-03 & 1.99 & 9.20078e-04 & 2.07 & 4.00760e-02 & 1.00 & 4.00876e-02 & 1.00 \\
\midrule
Algorithm 3\\
\midrule
$1/4$ & $1/4$ & 5.31525e-01 & 0.00 & 8.12402e-02 & 0.00 & 3.02292e-01 & 0.00 & 3.07576e-01 & 0.00 \\
$1/8$ & $1/16$ & 1.44897e-01 & 1.88 & 1.69559e-02 & 2.26 & 1.57992e-01 & 0.94 & 1.58713e-01 & 0.95 \\
$1/16$ & $1/64$ & 3.71835e-02 & 1.96 & 3.85633e-03 & 2.14 & 7.99173e-02 & 0.98 & 8.00094e-02 & 0.99 \\
$1/32$ & $1/256$ & 9.36343e-03 & 1.99 & 9.20079e-04 & 2.07 & 4.00760e-02 & 1.00 & 4.00876e-02 & 1.00 \\ 
\bottomrule
\label{tab: nu 0.49}
\end{tabular}
\end{table} 

\begin{table}[htbp] \tiny  
\caption{ $\nu=0.3,E=1,a_0=c_0=0.2, b_0=0.1,\alpha=\beta=0.1,\bm K=\bm \Theta=10^{-9}\bm I,k=2,l=1 $. }
\centering
\begin{tabular}{cccccccccc}
\toprule
  $h$&$\Delta t$&$\Vert\bm u\Vert_{H^1}$ &rate &$\Vert\xi\Vert_{L^2}$ &rate &$\Vert p\Vert_{H^1}$ &rate&$\Vert T\Vert_{H^1}$ &rate \\ 
\midrule
Algorithm 1\\
\midrule
$1/4$ & $1/4$ & 5.35740e-01 & 0.00 & 7.12752e-02 & 0.00 & 3.09616e+00 & 0.00 & 3.10761e+00 & 0.00 \\
$1/8$ & $1/16$ & 1.47489e-01 & 1.86 & 1.92937e-02 & 1.89 & 1.09761e+00 & 1.50 & 1.11094e+00 & 1.48 \\
$1/16$ & $1/64$ & 3.79680e-02 & 1.96 & 4.98155e-03 & 1.95 & 3.63574e-01 & 1.59 & 3.69518e-01 & 1.59 \\
$1/32$ & $1/256$ & 9.57336e-03 & 1.99 & 1.26006e-03 & 1.98 & 1.24491e-01 & 1.55 & 1.26756e-01 & 1.54 \\
\midrule
Algorithm 2\\
\midrule
$1/4$ & $1/4$ & 5.52266e-01 & 0.00 & 1.13085e-01 & 0.00 & 6.26198e+00 & 0.00 & 6.24313e+00 & 0.00 \\
$1/8$ & $1/16$ & 1.53288e-01 & 1.85 & 3.38974e-02 & 1.74 & 2.15557e+00 & 1.54 & 2.16011e+00 & 1.53 \\
$1/16$ & $1/64$ & 3.95654e-02 & 1.95 & 9.01412e-03 & 1.91 & 7.05097e-01 & 1.61 & 7.08109e-01 & 1.61 \\
$1/32$ & $1/256$ & 9.98569e-03 & 1.99 & 2.30565e-03 & 1.97 & 2.36564e-01 & 1.58 & 2.37853e-01 & 1.57 \\
\midrule
Algorithm 3\\
\midrule
$1/4$ & $1/4$ & 5.52089e-01 & 0.00 & 1.12960e-01 & 0.00 & 6.27555e+00 & 0.00 & 6.25513e+00 & 0.00 \\
$1/8$ & $1/16$ & 1.53521e-01 & 1.85 & 3.44360e-02 & 1.71 & 2.16177e+00 & 1.54 & 2.16556e+00 & 1.53 \\
$1/16$ & $1/64$ & 3.95747e-02 & 1.96 & 9.03953e-03 & 1.93 & 7.05553e-01 & 1.62 & 7.08252e-01 & 1.61 \\
$1/32$ & $1/256$ & 9.98508e-03 & 1.99 & 2.30525e-03 & 1.97 & 2.36612e-01 & 1.58 & 2.37777e-01 & 1.57 \\
\bottomrule
\label{tab: K Theta 10-9}
\end{tabular}
\end{table}

\begin{table}[htbp] \tiny  
\caption{ $\nu=0.3,E=1,a_0=c_0=b_0=0,\alpha=\beta=0.1,\bm K=\bm \Theta=\bm I,k=2,l=1 $. }
\centering
\begin{tabular}{cccccccccc}
\toprule
  $h$&$\Delta t$&$\Vert\bm u\Vert_{H^1}$ &rate &$\Vert\xi\Vert_{L^2}$ &rate &$\Vert p\Vert_{H^1}$ &rate&$\Vert T\Vert_{H^1}$ &rate \\ 
\midrule
Algorithm 1\\
\midrule
$1/4$ & $1/4$ & 5.29574e-01 & 0.00 & 4.90870e-02 & 0.00 & 3.02301e-01 & 0.00 & 3.07579e-01 & 0.00 \\
$1/8$ & $1/16$ & 1.45378e-01 & 1.87 & 1.02451e-02 & 2.26 & 1.57993e-01 & 0.94 & 1.58713e-01 & 0.95 \\
$1/16$ & $1/64$ & 3.73916e-02 & 1.96 & 2.32913e-03 & 2.14 & 7.99174e-02 & 0.98 & 8.00094e-02 & 0.99 \\
$1/32$ & $1/256$ & 9.42485e-03 & 1.99 & 5.55688e-04 & 2.07 & 4.00760e-02 & 1.00 & 4.00876e-02 & 1.00 \\
\midrule
Algorithm 2\\
\midrule
$1/4$ & $1/4$ & 5.29752e-01 & 0.00 & 4.87495e-02 & 0.00 & 3.02365e-01 & 0.00 & 3.07640e-01 & 0.00 \\
$1/8$ & $1/16$ & 1.45398e-01 & 1.87 & 1.01787e-02 & 2.26 & 1.58000e-01 & 0.94 & 1.58719e-01 & 0.95 \\
$1/16$ & $1/64$ & 3.73924e-02 & 1.96 & 2.31723e-03 & 2.14 & 7.99182e-02 & 0.98 & 8.00102e-02 & 0.99 \\
$1/32$ & $1/256$ & 9.42468e-03 & 1.99 & 5.53153e-04 & 2.07 & 4.00761e-02 & 1.00 & 4.00877e-02 & 1.00 \\
\midrule
Algorithm 3\\
\midrule
$1/4$ & $1/4$ & 5.29575e-01 & 0.00 & 4.90917e-02 & 0.00 & 3.02364e-01 & 0.00 & 3.07637e-01 & 0.00 \\
$1/8$ & $1/16$ & 1.45378e-01 & 1.87 & 1.02452e-02 & 2.26 & 1.58000e-01 & 0.94 & 1.58719e-01 & 0.95 \\
$1/16$ & $1/64$ & 3.73916e-02 & 1.96 & 2.32911e-03 & 2.14 & 7.99182e-02 & 0.98 & 8.00102e-02 & 0.99 \\
$1/32$ & $1/256$ & 9.42485e-03 & 1.99 & 5.55682e-04 & 2.07 & 4.00761e-02 & 1.00 & 4.00877e-02 & 1.00 \\
\bottomrule
\label{tab: a0=b0=c0}
\end{tabular}
\end{table} 

\begin{table}[htbp] \tiny  
\caption{ $\nu=0.3,E=1,a_0=c_0=0.2, b_0=0.1,\alpha=\beta=0.1,\bm K=\bm \Theta=\bm I,k=3,l=2 $. }
\centering
\begin{tabular}{cccccccccc}
\toprule
  $h$&$\Delta t$&$\Vert\bm u\Vert_{H^1}$ &rate &$\Vert\xi\Vert_{L^2}$ &rate &$\Vert p\Vert_{H^1}$ &rate&$\Vert T\Vert_{H^1}$ &rate \\ 
\midrule
Algorithm 1\\
\midrule
$1/4$ & $1/4$ & 8.05686e-02 & 0.00 & 6.45457e-03 & 0.00 & 4.57150e-02 & 0.00 & 4.70966e-02 & 0.00 \\
$1/8$ & $1/32$ & 9.85829e-03 & 3.03 & 7.87602e-04 & 3.03 & 1.20176e-02 & 1.93 & 1.21952e-02 & 1.95 \\
$1/16$ & $1/256$ & 1.20740e-03 & 3.03 & 9.80993e-05 & 3.01 & 3.06313e-03 & 1.97 & 3.08539e-03 & 1.98 \\
$1/32$ & $1/2048$ & 1.49569e-04 & 3.01 & 1.23106e-05 & 2.99 & 7.71746e-04 & 1.99 & 7.74526e-04 & 1.99 \\
\midrule
Algorithm 2\\
\midrule
$1/4$ & $1/4$ & 8.05772e-02 & 0.00 & 4.82056e-03 & 0.00 & 4.62655e-02 & 0.00 & 4.76362e-02 & 0.00 \\
$1/8$ & $1/32$ & 9.83672e-03 & 3.03 & 6.31639e-04 & 2.93 & 1.20397e-02 & 1.94 & 1.22171e-02 & 1.96 \\
$1/16$ & $1/256$ & 1.20392e-03 & 3.03 & 7.91288e-05 & 3.00 & 3.06442e-03 & 1.97 & 3.08668e-03 & 1.98 \\
$1/32$ & $1/2048$ & 1.49111e-04 & 3.01 & 9.93984e-06 & 2.99 & 7.71826e-04 & 1.99 & 7.74606e-04 & 1.99 \\
\midrule
Algorithm 3\\
\midrule
$1/4$ & $1/4$ & 8.05720e-02 & 0.00 & 6.47332e-03 & 0.00 & 4.62485e-02 & 0.00 & 4.76236e-02 & 0.00 \\
$1/8$ & $1/32$ & 9.85828e-03 & 3.03 & 7.87035e-04 & 3.04 & 1.20395e-02 & 1.94 & 1.22173e-02 & 1.96 \\
$1/16$ & $1/256$ & 1.20740e-03 & 3.03 & 9.80409e-05 & 3.00 & 3.06440e-03 & 1.97 & 3.08669e-03 & 1.98 \\
$1/32$ & $1/2048$ & 1.49568e-04 & 3.01 & 1.23040e-05 & 2.99 & 7.71825e-04 & 1.99 & 7.74607e-04 & 1.99 \\
\bottomrule
\label{tab: k3l2}
\end{tabular}
\end{table}

\begin{table}[htbp] \tiny  
\caption{ $\nu=0.3,E=1,a_0=c_0=0.2, b_0=0.1,\alpha=\beta=0.1,\bm K=\bm \Theta=\bm I,k=2,l=1 $. }
\centering
\begin{tabular}{cccccccccc}
\toprule
  &$h$&$\Delta t$&$\Vert\bm u\Vert_{H^1}$  &$\Vert\xi\Vert_{L^2}$  &$\Vert p\Vert_{H^1}$ &$\Vert T\Vert_{H^1}$ & CPU time(s) \\ 
\midrule
Fully  Coupled Algorithm  &$1/40$&$1/16$&1.24321e-02&7.27150e-04&4.79393e-02&4.79393e-02&18.81\\
Algorithm 1   &$1/40$&$1/16$&6.39521e-03&1.51777e-03&3.20763e-02&3.20763e-02&17.72\\
Algorithm 2  &$1/40$&$1/16$&6.03897e-03&3.54112e-04&3.21111e-02&3.21111e-02&17.62\\
Algorithm 3(parallel algorithm) &$1/40$&$1/16$&6.35586e-03&1.40841e-03&3.21112e-02&3.21112e-02&12.99\\
\midrule
Fully  Coupled algorithm  &$1/80$&$1/64$&3.07670e-03&1.78298e-04&1.82882e-02&1.82882e-02&315.37\\
Algorithm 1   &$1/80$&$1/64$&1.59730e-03&3.71409e-04&1.60441e-02&1.60441e-02&218.92\\
Algorithm 2  &$1/80$&$1/64$&1.51243e-03&8.69072e-05&1.60481e-02&1.60481e-02&218.34\\
Algorithm 3(parallel algorithm) &$1/80$&$1/64$&1.89545e-03&7.52876e-04&1.60482e-02&1.60482e-02&149.74\\
\bottomrule
\label{tab: time compare}
\end{tabular}
\end{table} 
Next, we investigate the performance of the algorithms under challenging parameter regimes characterized by extremely low hydraulic and thermal conductivities.  
In Table \ref{tab: nu 0.3}, the baseline case was considered with $\bm{K} = \bm{\Theta} = 0.1\bm{I}$.  
To stress-test the algorithms, Table \ref{tab: K Theta 10-9} reports results for the extreme case $\bm{K} = \bm{\Theta} = 10^{-9}\bm{I}$, while keeping all other parameters unchanged.  
This setting corresponds to a nearly impermeable and thermally insulating medium, where diffusion effects are negligible.  
Remarkably, even under such restrictive conditions, all algorithms preserve their optimal convergence behavior, thereby confirming their reliability and robustness in near-degenerate conductivity regimes.  

We also examine the sensitivity of the methods to material parameters associated with thermal and storage effects.  
Specifically, Table \ref{tab: a0=b0=c0} presents results for the degenerate configuration $a_0 = b_0 = c_0 = 0$, with the remaining parameters identical to those in Table \ref{tab: nu 0.3}.  
This test eliminates the effective thermal parameter, the thermal dilation coefficient, and the specific storage coefficient, thereby probing the limits of the thermo-poroelastic model.  
Despite the absence of these stabilizing contributions, all algorithms continue to yield optimal convergence rates in the energy norms, which highlights their robustness even in highly singular parameter settings.  

Taken together, these numerical experiments demonstrate that the proposed algorithms are unconditionally stable, achieve optimal convergence, and exhibit strong robustness across a broad spectrum of physical parameter regimes.  
An additional practical advantage lies in their ability to decouple the fully coupled four-field problem into two smaller subproblems.  
In particular, the parallel semi-decoupled scheme (\textbf{Algorithm 3}) is especially attractive, as it not only maintains accuracy but also offers substantial reductions in computational cost.  

To substantiate this point, Table \ref{tab: time compare} compares the CPU times of the fully coupled method \eqref{TP_Model_dis_a}--\eqref{TP_Model_dis_d} with those of \textbf{Algorithms 1}, \textbf{2}, and \textbf{3}.  
The results clearly show that all three algorithms significantly reduce computational effort without sacrificing accuracy, with the parallel implementation (\textbf{Algorithm 3}) delivering the most notable efficiency gains.

{\bfseries Example 2:}  
In this example, we consider a geothermal reservoir problem involving injection–production processes in a fractured domain, inspired by \cite{yi2024physics, gudala2024fractured}. The computational domain is given by $\Omega = (0,500)\times(0,500) $, containing a $200$ long hydraulic fracture that serves as the primary pathway for fluid flow and heat exchange with the surrounding rock, as illustrated in Fig.~\ref{fig: domain}. The injection and production wells are placed at $(350,250)$ and $(150,250)$, respectively, at the two ends of the fracture.  

To simplify the setup, the injection and production functions for pressure and temperature are prescribed as
\begin{equation*}
\begin{aligned}
&g(x,y) = 100 e^{(-0.001(x - 150)^2 - 0.001(y - 250)^2)} - 100 e^{(-0.001(x - 350)^2 - 0.001(y - 250)^2)}, \\
&H_s(x,y) = 100 e^{(-0.001(x - 150)^2 - 0.001(y - 250)^2)} - 100 e^{(-0.001(x - 350)^2 - 0.001(y - 250)^2)},
\end{aligned}
\end{equation*}
with body force $\bm f = \bm 0$. The discretization parameters are $h=10$, $\Delta t = 0.01$, and final time $\tau = 1$.  

We choose $\nu = 0.499$ (close to the incompressible limit $\nu \to 0.5$) and $E=24$, leading to a large value of $\lambda$. The thermal dilation coefficient is set to $b_0=3 \times 10^{-5}$, and the fluid viscosity is taken to be low, $\mu_f=3.2 \times 10^{-10}$. With fluid permeability $\bm K_p = 3.2 \times 10^{-16}\bm I$, the effective matrix parameter becomes $\bm K = \bm K_p / \mu_f = 10^{-6}\bm I$. The remaining parameters are given by
\begin{equation*}
\begin{aligned}
a_0 = 0.1, \quad c_0 = 10^{-3}, \quad \alpha = 0.25, \quad \beta = 0.001, \quad \bm\Theta = 2.6 \bm I .
\end{aligned}
\end{equation*}
Boundary conditions are imposed as in \eqref{eq: B_C}, with initial states $\bm u^0 = \bm 0$, $p^0 = 0$, and $T^0 = 100$.  

Since the three algorithms yield nearly identical results, we directly present the displacement $\bm u$, pressure $p$, and temperature $T$ obtained by \textbf{Algorithm 3} in Fig.~\ref{fig: pTu}. The computed fields capture the essential physical processes of fluid injection, heat transport, and displacement in the fractured geothermal reservoir. Importantly, these results demonstrate that \textbf{Algorithm 3} not only maintains accuracy comparable to the sequential methods but also achieves this with substantially improved computational efficiency. This highlights its effectiveness and practicality for large-scale thermo-poroelastic simulations. 

 \begin{figure}
     \centering 
     \includegraphics[width=0.3\textwidth]{./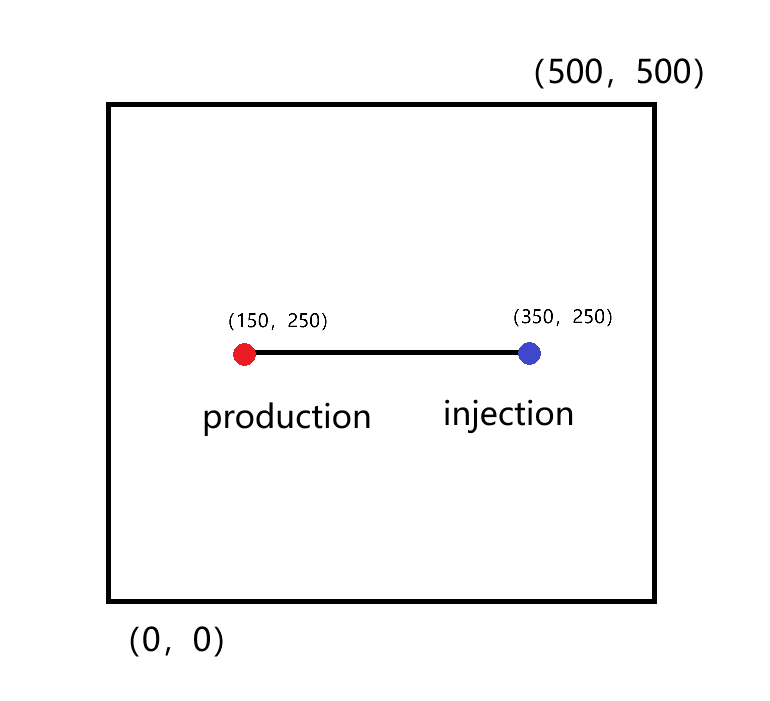
     }
     \vspace{-15pt}
     \caption{Injection-production domain
     }
     \label{fig: domain}
 \end{figure}
 
 \begin{figure}
     \centering 
     \includegraphics[width=0.3\textwidth]{./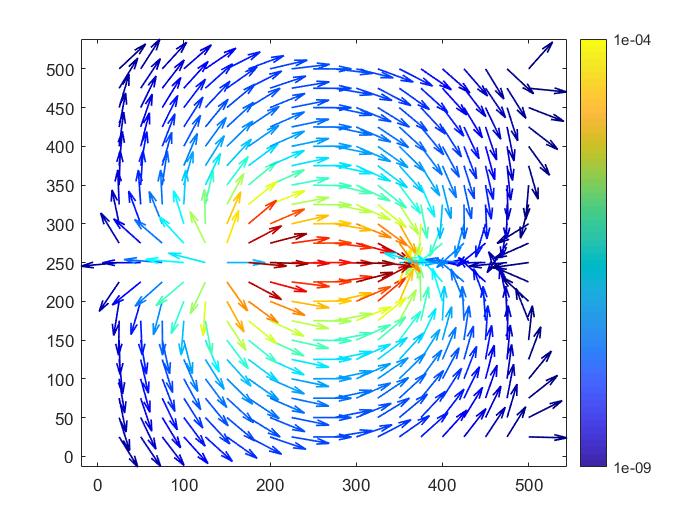
     }
     \includegraphics[width=0.3\textwidth]{./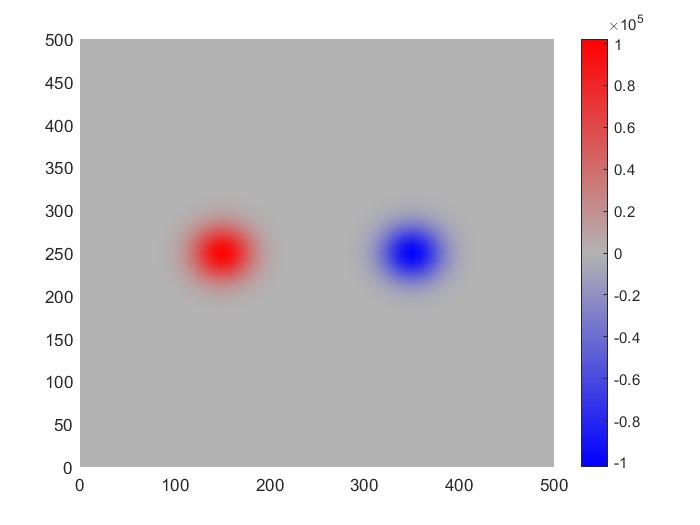
     }
     \includegraphics[width=0.3\textwidth]{./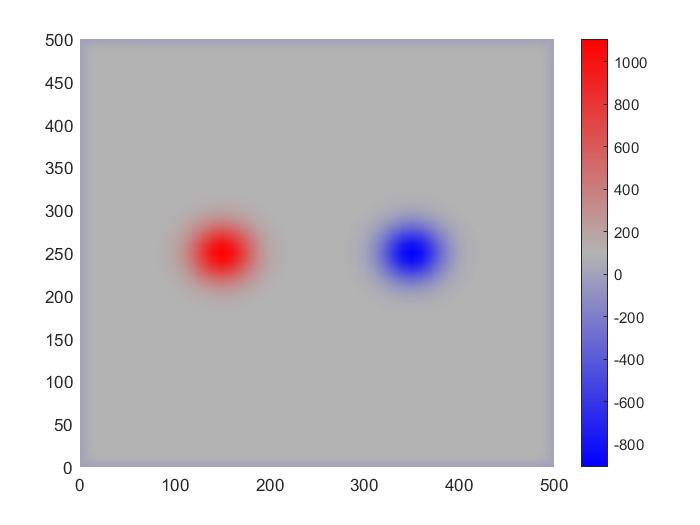
     }
     \vspace{-15pt}
     \caption{Example 2: displacement $\bm u$(left), pressure $p$(middle), and temperature $T$ at terminal time $\tau=1$
     }
     \label{fig: pTu}
 \end{figure}

\section{Conclusions}\label{sec: conclusions}
In this work, we developed and analyzed three non-iterative semi-decoupled algorithms for the thermo-poroelastic model: two sequentially split schemes and one parallel split scheme. Numerical experiments are provided to validate the effectiveness and the efficiency of these algorithms. The parallel algorithm leverages a two-step time discretization to achieve true temporal decoupling, enabling independent and concurrent solution of the two subsystems at each time level. Compared to existing methods, our approach avoids the need for stabilization terms while maintaining stability and accuracy across a wide range of parameter regimes.  These features make the proposed algorithms highly efficient, robust, and accurate for large-scale multiphysics simulations in geomechanics, biomedical modeling, and energy applications.

\section*{Acknowledgments}
The authors would like to thank Dr. Jijing Zhao for sharing her idea on a parallel algorithm for multiple-network Biot's model \cite{zhao2025optimally}, which motivates this work. The work of J. Li is partially supported by the Shenzhen Sci-Tech Fund No. RCJC20200714114556020, Guangdong Basic and Applied Research Fund No. 2023B1515250005.  The work of M. Cai is partially supported by the NIH-RCMI award (Grant No. 347U54MD013376) and the affiliated project award from the Center for Equitable Artificial Intelligence and Machine Learning Systems (CEAMLS) at Morgan State University (project ID 02232301). The work of Q. Liu is partially supported by the Guangdong Basic and Applied Basic Research Foundation (2022B1515120009), and the Shenzhen Science and Technology Program  (20231121110406001, JCYJ20241202124209011).

\bibliographystyle{abbrv}

\bibliography{ref}

\end{document}